\documentclass[a4paper,12pt,leqno]{article}
\usepackage{amsmath}
\usepackage{amsthm}
\usepackage{amssymb}
\usepackage{amscd}
\usepackage{graphicx, color}
\usepackage[all]{xy}
\newtheorem{thm}{Theorem}[section]
\newtheorem{dfn}[thm]{Definition}
\newtheorem{prp}[thm]{Proposition}
\newtheorem{lmm}[thm]{Lemma}
\newtheorem{crl}[thm]{Corollary}
\newtheorem{example}[thm]{Example}
\newtheorem{rmk}[thm]{Remark}
\numberwithin{equation}{section}

\usepackage[all]{xy}

\newcommand{\Hol}{\mbox{{\rm Hol}}}
\newcommand{\Alg}{\mbox{{\rm Alg}}}

\newcommand{\Pol}{\mbox{{\rm Pol}}}

\newcommand{\Z}{\Bbb Z}
\newcommand{\N}{\Bbb N}
\newcommand{\C}{\Bbb C}
\newcommand{\R}{\Bbb R}

\renewcommand{\P}{{\rm P}}
\newcommand{\SP}{\mbox{{\rm SP}}}

\newcommand{\RP}{\Bbb R\mbox{{\rm P}}}

\newcommand{\Map}{\mbox{{\rm Map}}}

\newcommand{\CP}{\Bbb C {\rm P}}

\newcommand{\dis}{\displaystyle}
\newcommand{\p}{\prime}




\newcommand{\I}{\mbox{{\rm (i)}}}
\newcommand{\II}{\mbox{{\rm (ii)}}}
\newcommand{\III}{\mbox{{\rm (iii)}}}
\newcommand{\IV}{\mbox{{\rm (iv)}}}
\newcommand{\V}{\mbox{{\rm (v)}}}


\newcommand{\XS}{X_{\Sigma}}

\newcommand{\GS}{G_{\Sigma}}

\newcommand{\dmin}{d_{\rm min}}
\newcommand{\rmin}{r_{\rm min}}
\newcommand{\KS}{\mathcal{K}_{\Sigma}}
\newcommand{\T}{\Bbb T}

\newcommand{\SZ}{{\mathcal{X}}^{D}}


\newcommand{\e}{\textbf{\textit{e}}}

\newcommand{\ES}{E^{\Sigma, \R}}
\title{\bf
The homotopy type of the
space of algebraic loops on a toric variety}

\author{Andrzej Kozlowski\footnote{%
Institute of Applied Mathematics and Mechanics,
University of Warsaw, Banacha 2, 02-097 Warsaw, Poland
(E-mail: akoz@mimuw.edu.pl)
}
\  and \ 
Kohhei Yamaguchi\footnote{%
Department of Mathematics,
University of Electro-Communications,  Chofu, Tokyo 182-8585, Japan
(E-mail: kohhe@im.uec.ac.jp);
The second author is supported by 
JSPS KAKENHI Grant Number 18K03295.
\newline
\quad 2010 {\it Mathematics Subject Clasification.} Primary 55P10; Secondly 55R80, 55P35, 14M25.}}

\date{}
\begin{document}
\maketitle

%
\begin{abstract}
We  investigate the homotopy type of the space of tuples of polynomials inducing
 base-point preserving algebraic
maps  from the circle $S^1$ to a toric variety $\XS$.
In particular,  we  prove 
 a homotopy stability result for this space
by combining the Vassiliev spectral sequence \cite{Va}
and the scanning map \cite{Se}.
\end{abstract}

\section{Introduction}\label{section 1}
For two topological spaces $X$ and $Y$
with base-points, let $\Map^*(X,Y)$ 
denote the space of all continuous based maps
(i.e. base-point preserving maps)
$f:X\to Y$  with compact-open topology.
When these two spaces
have some additional structure,
e.g.  that of a complex or symplectic manifold or an algebraic variety,
it is natural to consider the subspace
$\mathcal{S}(X,Y)\subset \Map^*(X,Y)$  of all based maps $f$
which preserve this structure and to
ask whether the inclusion map
$i:\mathcal{S}(X,Y)\to \Map^*(X,Y)$ 
is a homotopy or homology equivalence up to some dimension.
Early examples of this type of phenomenon 
can be found in \cite{Grom}.
In many cases of interest the infinite dimensional space  $\mathcal{S} (X,Y)$  
has a filtration by finite dimensional subspaces, 
given by some kind of  \lq\lq  degree of maps\rq\rq, 
and  the topology of these finite dimensional spaces 
approximates the topology of the entire space of continuous maps;
the approximation becoming more accurate as the degree increases. 
A great deal of attention has been 
devoted to  problems of the above kind in the case 
where the  \lq\lq additional structure\rq\rq\  
in question is the structure of a complex manifold 
(so that the structure preserving maps are holomorphic maps).
The first explicit result of this kind seems to have been the following theorem of Segal:

\begin{thm}
[G. Segal, \cite{Se}]\label{thm: S}
If $d\in \N$,
the inclusion map
$$
i_d:\Hol_d^*(S^2,\CP^{n})\to \Map_d^*(S^2,\CP^{n})=\Omega^2_d\CP^{n}
\simeq \Omega^2S^{2n+1}
$$
is a homotopy equivalence through dimension $(2n-1)d-1$.
Here,
we denote by $\N$  the space of all positive integers and
the space $\Hol^*_d(S^2,\CP^n)$ $($resp. $\Omega^2_d\CP^n)$
denotes the space consisting of all
based  holomorphic $($resp. based continuous$)$
maps $f:S^2\to \CP^n$ of degree $d$.\footnote{%
The stablization dimension $(2n-1)d-1$ obtained by Segal is not optimal for $n\ge 2$. The optimal dimension is $(2n-1)(d+1)-1$ (see \cite{KY6}).
}
\qed
\end{thm}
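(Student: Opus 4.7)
The plan is to combine the classical scanning map of Segal with a discriminant-style spectral sequence argument, splitting the statement into a stability claim and a limit claim.

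First I would make the model of $\Hol_d^*(S^2,\CP^n)$ completely explicit. Fixing the base point $[1:1:\cdots:1]\in\CP^n$ and choosing the north pole as base point of $S^2=\C\cup\{\infty\}$, a based degree-$d$ holomorphic map is represented by an $(n+1)$-tuple $(p_0,p_1,\ldots,p_n)$ of monic degree-$d$ polynomials in $\C[z]$ that have no common root. This realises $\Hol_d^*(S^2,\CP^n)$ as the complement of the resultant-type discriminant
$$
\Sigma_d=\{(p_0,\ldots,p_n)\in\C^{(n+1)d}:\gcd(p_0,\ldots,p_n)\neq 1\}
\subset\C^{(n+1)d}.
$$
Next I would construct the scanning map $s_d:\Hol_d^*(S^2,\CP^n)\to\Omega^2\CP^n$: view the tuple of polynomials as a labelled divisor on $\C$ and assign to each small disk in $\C$ its local contribution in an appropriate configuration-space model of $\Omega^2\CP^n$. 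Factoring $i_d$ through $s_d$ up to homotopy reduces the theorem to showing that $s_d$ is a homotopy equivalence through the claimed range.

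The main technical step is stability: the natural stabilisation $\Hol_d^*\to\Hol_{d+1}^*$ (obtained by multiplying the tuple by a common linear factor and then perturbing off the diagonal) is a homotopy equivalence through dimension $(2n-1)d-1$. To prove this I would build a non-degenerate simplicial resolution $\widetilde{\Sigma}_d\to\Sigma_d$ stratified by the number of common roots: the $k$-th stratum parametrises $k$ distinct common zeros of the tuple, hence has real codimension in $\C^{(n+1)d}$ at least $2(n+1)k-2k=2nk$, and contributes an open cell of dimension at most $2(n+1)d-2nk+(k-1)$ to the resolution. Combining this with Alexander duality gives a Vassiliev-type spectral sequence
$$
E_1^{p,q}\;\Longrightarrow\; H^{p+q}_c(\Hol_d^*(S^2,\CP^n))
$$
whose terms vanish below the stabilisation range. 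A parallel spectral sequence for $\Hol_{d+1}^*$, together with a comparison map induced by stabilisation, yields an isomorphism of $E_1$-pages through degree $(2n-1)d-1$, and the claimed homotopy equivalence follows by simple-connectivity of both spaces (for $n\geq 1$).

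Finally I would pass to the limit. Along the telescope of stabilisations the scanning maps $s_d$ assemble into a map $s_\infty:\mathrm{hocolim}_d\,\Hol_d^*(S^2,\CP^n)\to\Omega^2\CP^n$, and a standard group-completion argument, using that the configuration-space model of $\Omega^2\CP^n$ is built from the same local data as the polynomial discriminant complement, shows $s_\infty$ is a weak equivalence. Combined with the Hopf fibration equivalence $\Omega^2\CP^n\simeq\Omega^2 S^{2n+1}$ (from $S^1\to S^{2n+1}\to\CP^n$), this identifies the stable limit and, together with the stability range proved in the previous step, gives the theorem. The hardest step is unquestionably the codimension bookkeeping in the Vassiliev spectral sequence: one must ensure that higher symmetric products of local resolution data are organised to give exactly the advertised range $(2n-1)d-1$, rather than a strictly weaker linear bound, and this requires careful handling of the non-compactness of $\Sigma_d$ together with one-point compactification.
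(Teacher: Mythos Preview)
The paper does not prove this statement. Theorem~\ref{thm: S} is quoted from Segal's original article \cite{Se} as historical background and motivation; it is stated with a terminal \qed\ and no argument, and the paper's own machinery (Vassiliev spectral sequence, truncated simplicial resolutions, scanning) is developed and applied only to the toric-variety results (Theorems~\ref{thm: I}--\ref{thm: II}), not to Segal's $\CP^n$ theorem. So there is no ``paper's own proof'' to compare your proposal against.

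That said, your outline is a coherent modern proof strategy, essentially the Vassiliev--Mostovoy approach specialised to $\XS=\CP^n$, and it closely parallels what the paper does for general toric targets in \S\S\ref{section: spectral sequence}--\ref{section: stability}. Two remarks. First, your description of the stabilisation map (``multiply by a common linear factor and perturb off the diagonal'') is not the cleanest choice: multiplying all $n+1$ polynomials by the same factor lands you in the discriminant, and you then need a deformation argument; the paper (and most of the literature) instead adds a single new root to one coordinate at a time, or adds $n+1$ pairwise distinct roots, which stays inside the complement throughout. Second, Segal's original argument in \cite{Se} is rather different from yours: he works directly with configuration-space models and a gluing/quasifibration argument, without any discriminant resolution or Alexander duality. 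Your spectral-sequence route is closer to later refinements (and, if the bookkeeping is done sharply, actually yields the improved bound $(2n-1)(d+1)-1$ noted in the footnote rather than Segal's $(2n-1)d-1$).
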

\begin{rmk}
{\rm
Recall that a map $g:V\to W$ is called {\it a homology $($resp. homotopy$)$
equivalence through dimension }$N$ if the induced homomorphism
$
g_*:H_k(V;\Z)\to H_k(W;\Z)$
(resp.
$g_*:\pi_k(V)\to\pi_k(W))$
is an isomorphism for all $k\leq N$.
\qed
}
\end{rmk}
Segal conjectured that this theorem should have analogues for holomorphic maps from 
the Riemann sphere $S^2$ (or even any compact Riemann surface) to various complex manifolds $Y.$ He also suggested that some  analogues may exists for higher dimensional complex manifolds in place of $S^2$. All of Segal's conjectures have been shown to be true, although no fully satisfactory general explanation of these phenomena has been found. 
\par
Guest \cite{Gu2} was the first one to consider Segal's problem for the space of holomorphic maps from $S^2$ to a compact toric variety. His result was very general but the stability dimension he obtained was rather low. A much better stability bound was obtained by Mostovoy and 
Munguia-Villanueva \cite{MV}. Using methods developed by Vassiliev and Mostovoy \cite{Mo3}, they were able to obtain a Segal type theorem for the case of spaces of holomorphic maps from $\CP^n$ for $n\ge 1$ to complex compact toric varieties. Although their method works with arbitrary $\CP^n$ as source, for technical reasons the target space has to be compact. That leaves out some interesting toric varieties for which Segal's type theorems still hold (see for example \cite{KY6}). 
By replacing Mostovoy's approach (which relies on the Stone-Weierstrass Theorem) we were able to prove the Mostovoy and Munguia-Villanueva result for a larger class of target varieties but with $\CP^1=S^2$ as the source space and this is stated as follows.
\begin{thm}[\cite{KY9}]\label{thm: KY9}
Let $\XS$ be a simply connected non-singular toric variety associated to the fan $\Sigma$ such that 
the condition $($\ref{equ: homogenous}.1$)$ (see below)
is satisfied.\footnote{%
If $\XS$ is compact, the condition (\ref{equ: homogenous}.1)
is always satisfied (see Remark \ref{rmk: assumption}).
}
Then
if $D=(d_1,\cdots ,d_r)\in \N^r$ and $\sum_{k=1}^rd_k\textbf{\textit{n}}_k={\bf 0}_r$,
the inclusion map
$$
i_{D,hol}:\Hol_D^*(S^2,\XS) \stackrel{\subset}{\longrightarrow}
 \Omega^2_D\XS
$$ 
is a homotopy equivalence
through dimension $d_*(D,\Sigma)$ if $\rmin (\Sigma)\geq 3$
and a homology equivalence through dimension 
$d_*(D,\Sigma )=d_{min}-2$ if $\rmin (\Sigma)=2$.
\par
Here, 
$\Omega^2_D\XS$ (resp. $\Hol_D^*(S^2,\XS))$
denotes the space of based continuous (resp. based holomorphic) maps from 
$S^2$ to $\XS$ of
degree $D$,
the number $\rmin (\Sigma)$ is defined in (\ref{eq: rmin1}) and $d_*(D,\Sigma)$
is the number given by
\begin{equation}
d_*(D,\Sigma) =(2\rmin (\Sigma)-3)d_{min}-2,
\ \ 
\mbox{where }
d_{min}=\min \{d_1,\cdots ,d_r\}.
\qed
\end{equation}
\end{thm}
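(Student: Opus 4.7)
The plan is to follow the Segal--Vassiliev framework, adapted to the toric target $\XS$ via Cox's homogeneous coordinate ring. The first step is to identify $\Hol_D^*(S^2, \XS)$ with an open subvariety inside the affine space of $r$-tuples of polynomials $(f_1,\ldots,f_r)$ with $\deg f_k = d_k$ (suitably normalized at the base-point). The open condition is precisely that for every $z\in\C$ the index set $\{k : f_k(z)=0\}$ spans a cone of $\Sigma$; equivalently, no \emph{primitive collection} of rays of $\Sigma$ arises as a common zero locus. The balance condition $\sum_k d_k\nn_k = {\bf 0}_r$ is what makes the multi-degree well-defined, and the complement in the affine parameter space is a closed algebraic discriminant $\Sigma_D$.

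Second, I would construct a Vassiliev-type simplicial resolution of $\Sigma_D$ indexed by the primitive collections of $\Sigma$, yielding a spectral sequence converging to the Borel--Moore homology of $\Sigma_D$. Alexander duality in the ambient affine space converts this into a spectral sequence computing $H^*(\Hol_D^*(S^2, \XS))$. In parallel, produce the classical scanning map $\Hol_D^*(S^2, \XS)\to \Omega^2_D\XS$ together with a matching spectral sequence on the continuous side (for example by using the stable splitting of $\Omega^2\XS$ or a direct scanning-space model). The map $i_{D,hol}$ induces a morphism between the two spectral sequences, and the theorem reduces to showing this morphism is an isomorphism on the $E_1$-page in a suitable range.

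Third, I would extract the numerical stability bound. The key combinatorial input is that each discriminant stratum corresponding to a primitive collection of size $\ell$ has real codimension at least $2\ell$ in the ambient affine space; hence $\rmin(\Sigma)$ governs the first stratum that can contribute. Combined with the multi-degree $D$, the filtration gives an isomorphism in cohomology through dimension $(2\rmin(\Sigma)-3)\dmin - 2$. When $\rmin(\Sigma)\geq 3$ both spaces are simply connected in that range, and the homology equivalence upgrades to a homotopy equivalence by relative Hurewicz; when $\rmin(\Sigma) = 2$ simple-connectivity fails, and only the homology statement survives, explaining the dichotomy in the theorem.

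The main obstacle is the non-compactness of $\XS$ permitted by condition (\ref{equ: homogenous}.1). The Mostovoy--Munguia-Villanueva approach uses Stone--Weierstrass, which requires a compact target and is unavailable here; the workaround is to stay inside Cox coordinates throughout and prove the scanning comparison directly, which forces a careful analysis of how polynomial tuples behave near the exceptional set and near infinity. Ensuring that the Vassiliev simplicial resolution is compatible with the necessary truncations of the parameter space, and that the resulting spectral sequence comparison really does collapse to an isomorphism in the claimed range rather than merely up to a smaller dimension, is where the bulk of the technical effort will lie.
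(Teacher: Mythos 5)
Your overall skeleton is the right one and matches the machinery behind this theorem: identify $\Hol_D^*(S^2,\XS)$ with tuples of normalized polynomials in Cox coordinates whose common zero sets avoid the primitive collections, resolve the discriminant by a (truncated) Vassiliev simplicial resolution, pass through Alexander duality, read the stability range off the codimension of the strata indexed by primitive collections (this is where $\rmin(\Sigma)$ and $\dmin$ enter), and upgrade homology to homotopy by Hurewicz--Whitehead when $\rmin(\Sigma)\geq 3$, which also correctly explains the $\rmin(\Sigma)=2$ dichotomy. However, your central comparison step has a genuine gap. You propose to build ``a matching spectral sequence on the continuous side'' for $\Omega^2_D\XS$ and to show that $i_{D,hol}$ induces an isomorphism on the $E_1$-page. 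That is precisely the Mostovoy--Munguia-Villanueva mechanism: the resolution of the discriminant in the space of continuous maps is constructed via polynomial approximation (Stone--Weierstrass), which requires the target to be compact. Since condition (\ref{equ: homogenous}.1) allows non-compact $\XS$ --- the very generality the theorem is claiming --- this reduction does not go through as stated, and your own remark that Stone--Weierstrass is ``unavailable'' leaves the proposal without a working comparison at its core; ``prove the scanning comparison directly'' is not yet an argument.

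The actual route replaces that step by two independent pieces, neither of which appears concretely in your sketch. First, one never puts a Vassiliev spectral sequence on the continuous side at all: instead one defines stabilization maps $\Hol_D^*(S^2,\XS)\to \Hol_{D+\textbf{\textit{e}}_i}^*(S^2,\XS)$ (multiplying one coordinate by a linear factor vanishing away from the relevant region) and proves they are homology/homotopy equivalences through dimension $d_*(D,\Sigma)$ by comparing the \emph{truncated} simplicial resolutions of the two discriminants $\Sigma_D$ and $\Sigma_{D+\textbf{\textit{e}}_i}$ --- both on the algebraic side, where the affine-bundle description of the strata (Vandermonde-type argument) is available. Second, the colimit of these stabilizations is identified with $\Omega^2_D\XS$ (equivalently, with $\Omega^2$ of the associated polyhedral product $\mathcal{Z}_{\KS}(\C,\C^*)$ modulo $\GS$) by a scanning map whose stable version is shown to be an equivalence by a configuration-space quasifibration argument in the style of Segal and Guest, not by a spectral sequence comparison. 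The theorem then follows because $i_{D,hol}$ factors, up to homotopy, as stabilization followed by scanning. Also note a small numerical slip: a stratum where the polynomials indexed by a primitive collection of size $\ell$ acquire a common root at some point of $\C$ has real codimension $2\ell-2$ (complex codimension $\ell$ minus the one complex parameter for the moving root), not $2\ell$; the correct count is what produces $(2\rmin(\Sigma)-3)\dmin-2$ in the holomorphic case rather than the loop-space exponent $(2\rmin(\Sigma)-2)\dmin-2$.
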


In his seminal work Segal observed that a real analogue of his theorem also holds.  
In fact, it was this real case that inspired the interest in the entire area.  
He considered the space of real rational maps, that is pairs of monic polynomials of the same degree with real coefficients and without common (complex) roots (see \cite[Proposition 1.4]{Se}). 
\par
Later in \cite{GKY2}, \cite{Mo1} and \cite{AKY1},  a different \lq real version\rq\ of Seqal's theory was introduced; it involves looking at tuples of real polynomials of the same degree  without common real roots.  One such case was considered in \cite{KOY1}, where the analogue of the Mostovoy and Munguia-Villanueva theorem was proved for real rational maps from $\RP^n$ to a  compact complex toric variety. The method used in it was the same as in \cite{MV}
and it  was limited only to case of compact toric varieties. 
By imposing a condition involving common {\it real} roots rather than complex ones, one can also study spaces of rational maps from $S^1$ to complex toric varieties. 
\par
An important difference between them
is that tuples of polynomials without common real roots no longer correspond to algebraic maps. For example, a pair $(p(z),q(z))$ of monic polynomials  of the same degree, without a common real root defines a unique rational (algebraic) map $S^1 \to \CP^1$ but by multiplying both polynomials by the same monic polynomial without real roots, we obtain the same rational map. Thus it is necessary to distinguish between spaces of tuples of polynomials and that of rational maps - their image in the space of continuous maps. 
\par
Now we can state the main result of this paper 
(precise definitions of all the terms used will be given in \S \ref{section: introduction2}).
\begin{thm}
[Theorem \ref{thm: I}, Theorem \ref{thm: II}]\label{thm: main result}
Let $D=(d_1,\cdots ,d_r)\in \N^r$ be an $r$-tuple of positive integers 
and let $\XS$ be a simply connected non-singular toric variety associated to the fan $\Sigma$ 
such that 
the condition (\ref{equ: homogenous}.1) is satisfied.
Then the natural map 
$$
j_D:\Pol_D^*(S^1,\XS) \to \Omega U(\KS)\simeq
\Omega \mathcal{Z}_{\KS}
$$
is a homotopy equivalence through dimension
$d(D,\Sigma),$
where $d_{min}$ denotes the positive integer
$d_{min}=\min \{d_1,\cdots ,d_r\}$ and
the number $d(D,\Sigma)$ is given by
\begin{equation}
d(D,\Sigma)=2(\rmin  (\Sigma) -1)d_{\rm min}-2.
\end{equation}
Here the space $U(\KS)$ (resp. $\mathcal{Z}_{\KS}$) is 
the complement $U(\KS)$ of the coordinate subspaces of type $\KS$
(resp. the moment-angle complex of the underlying
simplicial complex $\KS$) and
$\Pol_D^*(S^1,\XS)$ denotes the space of $r$-tuples $(p_1(z), \dots,p_r(z))\in \C [z]^r$ of monic polynomials of  degrees $d_1,\dots,d_r$
satisfying the condition (\ref{eq: Pd}.1).  The image of this space  under $j_D$ is 
contained in the space $\Alg^*(S^1,\XS)$ of 
based algebraic loops  on  $ \XS$.
\end{thm}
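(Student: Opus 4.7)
The plan is to adapt the strategy of Theorem \ref{thm: KY9} from the $S^2$ case to the $S^1$ setting by combining the Vassiliev simplicial resolution \cite{Va} with Segal's scanning map \cite{Se}. First, I would realize $\Pol_D^*(S^1,\XS)$ as the complement of a discriminant $\Sigma_D$ inside the affine space $\A_D$ of $r$-tuples of monic complex polynomials of the specified degrees. The discriminant decomposes according to the combinatorics of $\KS$: a tuple $(p_1,\ldots,p_r)$ lies in $\Sigma_D$ precisely when for some minimal non-face $I$ of $\KS$ the polynomials $\{p_i : i \in I\}$ share a common zero on $S^1$. Each such locus has real codimension $2|I|-1 \geq 2\rmin(\Sigma)-1$ in $\A_D$, and this is the key numerical input for the dimension estimate.

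Next, I would construct the non-degenerate Vassiliev simplicial resolution $\widetilde{\Sigma}_D \to \Sigma_D$ with its natural filtration by the number of distinct singular points on $S^1$. Alexander duality then produces a spectral sequence converging to $H_*(\Pol_D^*(S^1,\XS))$, whose $E^1$-page is expressible in terms of configurations on $S^1$ labelled by minimal non-faces of $\KS$. In parallel, a scanning map argument yields a labelled-configuration-space model for $\Omega U(\KS) \simeq \Omega \mathcal{Z}_{\KS}$, together with a matching target spectral sequence. The inclusion $j_D$ induces a morphism between these two spectral sequences, and the heart of the argument is to show that this morphism is an isomorphism on $E^1$-pages in all total degrees $\leq d(D,\Sigma)$. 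The bound $d(D,\Sigma) = 2(\rmin(\Sigma)-1)\dmin-2$ emerges from a standard dimension count: the polynomial spectral sequence only accesses the first $\dmin$ columns of the stable resolution (since no monic $p_i$ can vanish more than $d_i \geq \dmin$ times on $S^1$), and each column contributes roughly $2(\rmin(\Sigma)-1)$ to the homological degree after the Alexander duality shift.

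The main obstacle will be twofold. First, in the real setting the Stone--Weierstrass-based techniques of \cite{Mo3, MV} are unavailable, so the quasi-fibration properties of the filtered resolution must be verified directly via the combinatorics of common zeros on $S^1$, in the spirit of what was carried out for $S^2$ in \cite{KY9}. Second, to upgrade the resulting homology equivalence to a homotopy equivalence in the full uniform range (without the deterioration observed at $\rmin(\Sigma)=2$ in Theorem \ref{thm: KY9}), one uses that both $\Pol_D^*(S^1,\XS)$ and $\Omega U(\KS)$ are path-connected (by condition (\ref{equ: homogenous}.1) and simple connectivity of $\XS$) and that their fundamental groups act nilpotently on higher homotopy groups; a standard Dror--Whitehead argument then converts the homology equivalence to a homotopy equivalence through the claimed dimension $d(D,\Sigma)$.
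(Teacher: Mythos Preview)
Your two main ingredients --- the Vassiliev simplicial resolution and Segal's scanning map --- are exactly the ones the paper uses, and your dimension count correctly identifies where the bound $d(D,\Sigma)$ comes from. However, the way you propose to combine them contains a genuine gap, and it is not how the paper proceeds.

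You propose to construct a ``target spectral sequence'' for $\Omega U(\KS)$ from a labelled-configuration model and then compare it directly, under $j_D$, with the Vassiliev spectral sequence for $\Pol_D^*(S^1,\XS)$. But the Vassiliev spectral sequence arises from Alexander duality in a finite-dimensional affine space, and there is no parallel discriminant-complement description of $\Omega U(\KS)$ into which $j_D$ induces a filtration-preserving map. The paper avoids this problem by separating the argument into two independent steps. First, the Vassiliev resolution (together with its truncated version, Lemma \ref{Lemma: truncated}) is used to compare $\Pol_D^*(S^1,\XS)$ with $\Pol_{D+\e_i}^*(S^1,\XS)$: both sides carry spectral sequences of the \emph{same} type, the stabilization map $s_{D,i}$ is filtration-preserving, and one shows it induces an isomorphism on $E^1$ in the relevant range (Lemma \ref{lmm: E2}, Theorem \ref{thm: III}). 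This is where the estimate $d(D,\Sigma)=2(\rmin(\Sigma)-1)\dmin-2$ actually enters. Second, and entirely separately, a horizontal scanning argument (Theorem \ref{thm: scanning map}, Theorem \ref{thm: V}) identifies the \emph{colimit} $\Pol^{\Sigma}_{D+\infty}$ with $\Omega U(\KS)$. Since $j_D$ factors up to homotopy through the stabilization maps followed by this identification, the stability range for $j_D$ is inherited from the first step.

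One smaller correction: no Dror--Whitehead argument is needed. The paper shows from the spectral sequence itself (Proposition \ref{prp: simply connected}) that $\Pol_D^*(S^1,\XS)$ is simply connected, and $\Omega U(\KS)\simeq\Omega\mathcal{Z}_{\KS}$ is simply connected because $\mathcal{Z}_{\KS}$ is $2$-connected (Lemma \ref{Lemma: BP}). The ordinary Whitehead theorem then suffices, which is precisely why the $S^1$ case does not exhibit the homology/homotopy degradation at $\rmin(\Sigma)=2$ that appears in Theorem \ref{thm: KY9}.
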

\begin{crl}[Corollary \ref{crl: I}, Corollary \ref{crl:  II}]
Under the same assumptions as Theorem \ref{thm: main result}, there is a natural map
$$
i_D:\Pol^*_D(S^1,\XS)\to \Omega \XS
$$
which induces an isomorphism
on homotopy groups
$$
(i_D)_*:\pi_k(\Pol^*_D(S^1,\XS))\stackrel{\cong}{\longrightarrow} 
\pi_k(\Omega \XS)\cong \pi_{k+1}(\XS)
$$
for any $2\leq k\leq d(D,\Sigma)$. 
\end{crl}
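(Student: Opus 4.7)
The plan is to construct $i_D$ as the composition of $j_D$ from Theorem \ref{thm: main result} with a natural map $\Omega U(\KS) \to \Omega \XS$ induced by the toric GIT quotient, and then reduce the corollary to a homotopy computation coming from the long exact sequence of a torus fibration. Recall that since $\XS$ is a simply connected non-singular toric variety, it can be realized as the geometric quotient $\XS = U(\KS)/\GS$, where $\GS$ is an algebraic subtorus of $(\C^*)^r$ isomorphic to $(\C^*)^{r-n}$, and the projection $\pi \colon U(\KS) \to \XS$ is a principal $\GS$-bundle. Looping $\pi$ yields a map $\Omega\pi \colon \Omega U(\KS) \to \Omega \XS$, and we define
\[
i_D \ := \ \Omega\pi \circ j_D \ \colon \ \Pol^*_D(S^1,\XS) \longrightarrow \Omega \XS.
\]
The fact that $i_D$ really does take a tuple of polynomials to the corresponding algebraic loop on $\XS$ will follow from the standard description of algebraic maps to $\XS$ as classes of tuples in $U(\KS)$ modulo the $\GS$-action, which is essentially already built into the definition of $j_D$.

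Next, I would analyze the fibration sequence $\GS \hookrightarrow U(\KS) \stackrel{\pi}{\to} \XS$. Since $\GS \cong (\C^*)^{r-n}$ is homotopy equivalent to the compact torus $T^{r-n}$, we have $\pi_k(\GS) = 0$ for all $k \geq 2$, while $\pi_1(\GS) \cong \Z^{r-n}$. The resulting long exact sequence
\[
\cdots \to \pi_k(\GS) \to \pi_k(U(\KS)) \to \pi_k(\XS) \to \pi_{k-1}(\GS) \to \cdots
\]
then shows that $\pi_*\pi \colon \pi_k(U(\KS)) \to \pi_k(\XS)$ is an isomorphism for every $k \geq 3$, and surjective for $k = 2$ (the map on $\pi_2$ is in fact an isomorphism as well, using that $\XS$ is simply connected so that the connecting homomorphism $\pi_2(\XS) \to \pi_1(\GS)$ lands in a free abelian group and is forced to be zero by comparison with the Hurewicz map, or alternatively by noting that $U(\KS)$ is $2$-connected in this setting via the complement arrangement description). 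Looping once, this translates to $(\Omega\pi)_* \colon \pi_k(\Omega U(\KS)) \to \pi_k(\Omega \XS)$ being an isomorphism for every $k \geq 2$.

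Finally, Theorem \ref{thm: main result} provides that $(j_D)_* \colon \pi_k(\Pol^*_D(S^1,\XS)) \to \pi_k(\Omega U(\KS))$ is an isomorphism for all $k \leq d(D,\Sigma)$. Composing these two isomorphisms gives
\[
(i_D)_* \colon \pi_k(\Pol^*_D(S^1,\XS)) \stackrel{\cong}{\longrightarrow} \pi_k(\Omega \XS) \cong \pi_{k+1}(\XS)
\]
for all $2 \leq k \leq d(D,\Sigma)$, where the last identification is the standard adjunction. The main point requiring care is the $\pi_2$-end of the argument: we have to verify that $\pi_2(U(\KS)) \to \pi_2(\XS)$ is an isomorphism rather than just a surjection, which is where the hypothesis that $\XS$ be simply connected (and the condition (\ref{equ: homogenous}.1)) enters decisively via the vanishing of $\pi_1(U(\KS))$. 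Once this is settled, the corollary follows immediately from the two isomorphism statements above.
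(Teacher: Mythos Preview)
Your approach is essentially the paper's: define $i_D=\Omega q_{\Sigma}\circ j_D$ (the paper writes $q_{\Sigma}$ for your $\pi$), show that $\Omega q_{\Sigma}$ induces isomorphisms on $\pi_k$ for $k\ge 2$, and combine with the main theorem. The paper packages the middle step more cleanly by observing (Lemma \ref{lmm: XS}) that $\Omega q_{\Sigma}:\Omega\mathcal{Z}_{\KS}\to\Omega\XS$ is a universal covering with fiber $\Z^{r-n}$, which immediately gives the required isomorphisms on $\pi_k$ for $k\ge 2$.

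There is, however, an error in your $\pi_2$ discussion that you should remove. The map $\pi_2(U(\KS))\to\pi_2(\XS)$ is \emph{not} an isomorphism, and not even surjective: since $U(\KS)\simeq\mathcal{Z}_{\KS}$ is $2$-connected one has $\pi_2(U(\KS))=0$, whereas $\pi_2(\XS)\cong\Z^{r-n}$; in the long exact sequence the connecting map $\pi_2(\XS)\to\pi_1(\GS)\cong\Z^{r-n}$ is an isomorphism, not zero. Fortunately this claim is entirely unnecessary. What you actually need is that $(\Omega\pi)_*:\pi_k(\Omega U(\KS))\to\pi_k(\Omega\XS)$ is an isomorphism for $k\ge 2$, which unwinds to $\pi_*:\pi_j(U(\KS))\to\pi_j(\XS)$ being an isomorphism for $j=k+1\ge 3$ --- and this you have already established correctly from the fibration and the vanishing of $\pi_j(\GS)$ for $j\ge 2$. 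So your final paragraph's ``main point requiring care'' is a red herring; drop the $\pi_2$ claim and the argument is complete and matches the paper's.
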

This paper is organized as follows.
In \S \ref{section: introduction2} we recall some basic facts about toric varieties and the definitions needed to give precise statements of our results. We also establish the notation that will be followed in the rest of the paper and then give full statements of all the main results
(Theorem \ref{thm: I},  Corollary \ref{crl: I}, 
 Theorem \ref{thm: II}, Corollary \ref{crl: II}, 
 Corollary \ref{crl: I-2}, Corollary \ref{crl: II-2}),
which will be proved in \S \ref{section: proofs}.
We also consider some example (Example \ref{example: H(k)}).
In \S \ref{section: toric variety}
we recall several definitions and known results from toric topology.
In \S \ref{section: simplicial resolution},
we recall the definitions of the non-degenerate simplicial resolution
and the associated truncated simplicial resolutions.
In \S \ref{section: spectral sequence}, we construct the Vassiliev spectral sequence and
compute its $E_1$-terms.
In  \S \ref{section: stabilization map}
we define stabilization maps and prove the homotopy stability 
(Theorem \ref{thm: III}, Theorem \ref{thm: IV}).
Moreover, we give the definition of the map
$j_D$ when $\sum_{k=1}^rd_k\textbf{\textit{n}}_k\not= {\bf 0}_n$.
In \S \ref{section: scanning map} we recall the stable horizontal scanning map and prove that
it is a homotopy equivalence (Theorem \ref{thm: scanning map}). In \S \ref{section: stability}
we give the proof of stability result (Theorem \ref{thm: V}) by using the stable scanning maps.
Finally in \S \ref{section: proofs} we give the proofs of the main results of this paper.
\section{Basic definitions and the main results}\label{section: introduction2}

In this section we shall recall several basic definitions and facts 
for describing the main results. 
\paragraph{Fans and toric varieties}
{\it A convex rational polyhedral cone} $\sigma$ 
in $\R^n$
is a subset of $\R^n$ of the form
\begin{align}
\sigma &=\mbox{Cone}(S)
=
\mbox{Cone}(\textit{\textbf{m}}_1,\cdots,\textit{\textbf{m}}_s)
=
\Big\{\sum_{k=1}^s\lambda_k\textit{\textbf{m}}_k:\lambda_k\geq 0
\mbox{ for any }k\Big\}
\end{align}
for a finite set $S=\{\textit{\textbf{m}}_k\}_{k=1}^s\subset\Z^n.$\footnote{%
When $S$ is the emptyset $\emptyset$,
we set $\mbox{Cone}(\emptyset)=\{{\bf 0}_n\}$ and we may also regard it
as one of strongly convex rational polyhedral cones in
$\R^n$, where we denote by ${\bf 0}_n$ the zero vector in $\R^n$ defined by
${\bf 0}_n=(0,\cdots ,0)\in \R^n$.
}
A convex rational polyhedral cone $\sigma$
is called
{\it  strongly convex} if
$\sigma \cap (-\sigma)=\{{\bf 0}_n\}$, and
its dimension $\dim \sigma$ is the dimension of
the smallest subspace in $\R^n$ which contains $\sigma$.
{\it A face} $\tau$ of 
$\sigma$ is a subset $\tau\subset \sigma$ of the form
\begin{equation}
\tau =\sigma\cap \{\textit{\textbf{x}}\in\R^n:L(\textit{\textbf{x}})=0\}
\end{equation}
for some linear form $L$ on $\R^n$, such that
$L(\textit{\textbf{x}})\geq 0$ for any $\textit{\textbf{x}}\in \sigma$.
If
$\{k:L(\textit{\textbf{m}}_k)=0,1\leq k\leq s\}=\{i_1,\cdots ,i_t\},$
we easily see that $\tau =\mbox{Cone}(\textit{\textbf{m}}_{i_1},
\cdots , \textit{\textbf{m}}_{i_t})$.
Thus, a face $\tau$ is also a 
strongly convex rational polyhedral cone if $\sigma$ is so.
\par
A finite collection $\Sigma$ of strongly convex rational polyhedral cones
in $\R^n$ 
is called {\it a fan} in $\R^n$ if 
every face $\tau$ of $\sigma\in\Sigma$ belongs to $\Sigma$ and
the intersection of any two elements of $\Sigma$ is a face of each.
\par
An $n$ dimensional irreducible normal  variety
$X$ (over $\C$) is called {\it a toric variety}
if it has a Zariski open subset
 $\T^n_{\C}=(\C^*)^n$ and the action of $\T^n_{\C}$ on itself
extends to an action of $\T^n_{\C}$ on $X$.
The most significant property of a toric variety
is the fact that it is characterized up to isomorphism entirely by its 
associated fan 
$\Sigma$. 
We denote by $\XS$ the toric variety associated to a fan $\Sigma$
(see \cite{CLS} for the details).
\par
Since the fan of $\T^n_{\C}$ is $\{{\bf 0}_n\}$ and
the case $\XS = \T^n_{\C}$ is trivial, 
we always assume that 
any fan $\Sigma$ in $\R^n$
satisfies the condition
$\{{\bf 0}_n\}\subsetneqq \Sigma .$
\begin{dfn}\label{dfn: fan}
{\rm
Let $\Sigma$ be a fan in $\R^n$
such that $\{{\bf 0}_n\}\subsetneqq \Sigma$ and let
\begin{equation}\label{eq: one dim cone}
\Sigma (1)=\{\rho_1,\cdots ,\rho_r\}
\end{equation}
denote the set of all
one dimensional cones in $\Sigma$. 
For each integer $1\leq k\leq r$,
we denote by $\textbf{\textit{n}}_k\in\Z^n$ 
\textit{the primitive generator} of
$\rho_k$, such that 
\begin{equation}\label{eq: primitive}
\rho_k \cap \Z^n=\Z_{\geq 0}\cdot \textbf{\textit{n}}_k.
\end{equation}
%
Note that
$\rho_k =\mbox{Cone}(\textit{\textbf{n}}_k)=\R_{\geq 0}\cdot \textit{\textbf{n}}_k$
for each $1\leq k\leq r$.
\qed
}
\end{dfn}

\paragraph{Polyhedral products}
Next, recall the definition of polyhedral products.
\begin{dfn}
{\rm
Let $K$ be a simplicial complex on the vertex set $[r]=\{1,2,\cdots ,r\}$,%
\footnote{%
Let $K$ be some set of subsets of $[r]$.
Then the set $K$ is called {\it an abstract simplicial complex} on the vertex set $[r]$ if
the following condition holds:
 if $\tau \subset \sigma$ and $\sigma\in K$, then $\tau\in K$.
In this paper by a simplicial complex $K$ we always mean an  \textit{an abstract simplicial complex}, 
and we always assume that a simplicial complex $K$  contains the empty set 
$\emptyset$.
}
and let $(\underline{X},\underline{A})=\{(X_1,A_1),\cdots ,(X_r,A_r)\}$ be a set of pairs of based spaces
such that $A_i\subset X_i$ for each $1\leq i\leq r$.
\par
(i)
 Let $\mathcal{Z}_K
(\underline{X},\underline{A})$ 
denote {\it the polyhedral product} of $(\underline{X},\underline{A})$
with respect to $K$
given by the union
\begin{align}
\mathcal{Z}_K(\underline{X},\underline{A})
&=
\bigcup_{\sigma\in K}(\underline{X},\underline{A})^{\sigma},
\quad \mbox{where we set}
\\
\nonumber
(\underline{X},\underline{A})^{\sigma}
&=
\{(x_1,\cdots ,x_r)\in X_1\times \cdots \times X_r:
x_k\in A_k\mbox{ if }k\notin \sigma\}.
\end{align}
When $(X_i,A_i)=(X,A)$ for each $1\leq i\leq r$, we write
$\mathcal{Z}_K(X,A)=\mathcal{Z}_K(\underline{X},\underline{A}).$
\par
(ii)
For a subset $\sigma 
\subset [r]$, let
$L_{\sigma}$ denote {\it the coordinate subspace} of $\C^r$ defined by
\begin{equation}
L_{\sigma}=\{(x_1,\cdots ,x_r)\in\C^r:
x_{i}=0\mbox{ if }i\in\sigma\}.
\end{equation}
Define {\it the complement $U(K)$ of the coordinate subspaces of type}  $K$
by
\begin{align}\label{IK}
U(K)&=
\C^r\setminus \bigcup_{\sigma\in I(K)}L_{\sigma}
\quad
\mbox{where we set }
I(K)=\{\sigma\subset [r]:\sigma\notin K\}.
\end{align}
\par
(iii)
For a fan $\Sigma$ in $\R^n$,
let $\mathcal{K}_{\Sigma}$ denote {\it the underlying simplicial complex of}  $\Sigma$
defined by
\begin{equation}
\KS =
\Big\{\{i_1,\cdots ,i_s\}\subset [r]:
\mbox{Cone}(\textbf{\textit{n}}_{i_1},\textbf{\textit{n}}_{i_2},\cdots
,\textbf{\textit{n}}_{i_s})\in \Sigma
\Big\}.
\end{equation}
Note that $\KS$ is a simplicial complex on the vertex set $[r]$.
\qed
}
\end{dfn}
\begin{rmk}\label{rmk: fan}
{\rm
(i)
It is easy to see that the following equality holds:
\begin{equation}\label{equ: UK}
U(K)=\mathcal{Z}_K(\C,\C^*).
\end{equation}
\par
(ii)
The fan $\Sigma$ is completely determined by
the pair
$(\mathcal{K}_{\Sigma},\{\textit{\textbf{n}}_k\}_{k=1}^r).$
\par
Indeed, if we set $C(\sigma)=\mbox{Cone}
(\textit{\textbf{n}}_{i_1},\cdots ,\textit{\textbf{n}}_{i_s})$
for $\sigma =\{i_1,\cdots ,i_s\}\subset [r]$
and $C(\emptyset )=\{{\bf 0}_n\}$,
then it is easy to see that
$\Sigma =
\{\mbox{C}(\sigma):\sigma \in \mathcal{K}_{\Sigma}\}.$
\qed
}
\end{rmk}

\paragraph{Homogenous coordinates}
Recall 
the homogenous coordinates on
toric varieties.
Let $\Sigma$ be a fan in $\R^n$ as in Definition \ref{dfn: fan}.

\begin{dfn}
{\rm
\par
(i)
Let $\GS\subset \T^r_{\C}=(\C^*)^r$ denote the multiplicative subgroup of
$\T^r_{\C}$
defined by
\begin{equation}
\GS =\{
(\mu_1,\cdots ,\mu_r)\in \T^r_{\C}:
\prod_{k=1}^r(\mu_k)^{\langle \textbf{\textit{n}}_k,\textbf{\textit{m}}
\rangle}=1
\mbox{ for all }\textbf{\textit{m}}\in\Z^n\},
\end{equation}
where
$\langle \ ,\ \rangle$ denotes the standard inner product on $\R^n$ given by
$\langle \textbf{\textit{u}}, \textbf{\textit{v}}\rangle=
\sum_{k=1}^nu_kv_k$ for
$\textbf{\textit{u}}=(u_1,\cdots ,u_n)$
and  $\textbf{\textit{v}}=(v_1,\cdots ,v_n)\in\R^n$.
\par
(ii)
Consider the natural $\GS$-action on
$\mathcal{Z}_{\mathcal{K}_{\Sigma}}(\C,\C^*)$ given by 
coordinate-wise multiplication, i.e.
\begin{equation}\label{eq: multiplication}
{\bf \mu}\cdot \textit{\textbf{x}}=
(\mu_1x_1,\cdots ,\mu_rx_r)
\end{equation}
for
$({\bf \mu}, \textit{\textbf{x}})=
((\mu_1,\cdots ,\mu_r),(x_1,\cdots ,x_r))\in \GS \times 
\mathcal{Z}_{\KS}(\C,\C^*)$.
We denote by
\begin{equation}
\mathcal{Z}_{\mathcal{K}_{\Sigma}}(\C,\C^*)/\GS =U(\KS)/\GS
\end{equation}
the corresponding orbit space and let
\begin{equation}
q_{\Sigma}:
\mathcal{Z}_{\mathcal{K}_{\Sigma}}(\C,\C^*)
\to
\mathcal{Z}_{\mathcal{K}_{\Sigma}}(\C,\C^*)/\GS =U(\KS)/\GS
\end{equation}
denote the canonical projection.
\qed
}
\end{dfn}
 The following theorem, which plays a crucial role in the proof of our main result, states 
 that in a toric variety, under certain mild conditions, one can construct \lq\lq homogeneous coordinates\rq\rq\  similar to those in a projective space. 
\begin{thm}
[\cite{Cox1}, Theorem 2.1] 
\label{lmm: Cox}
If the set $\{\textit{\textbf{n}}_k\}_{k=1}^r$ 
of all primitive generators  spans $\R^n$
$($i.e. $\sum_{k=1}^r\R\cdot \textit{\textbf{n}}_k =\R^n),$
there is a natural isomorphism
\begin{equation}\label{equ: homogenous}
\XS\cong
\mathcal{Z}_{\mathcal{K}_{\Sigma}}(\C,\C^*)/\GS =U(\KS)/\GS . 
\qquad \qed
\end{equation}
\end{thm}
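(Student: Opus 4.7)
The plan is to follow D.~Cox's construction of the homogeneous coordinate ring of a toric variety. First I would analyze the group $\GS$: the hypothesis $\sum_k \R\cdot\textbf{\textit{n}}_k = \R^n$ is equivalent to saying that the lattice map $\Z^r \to \Z^n$ sending $e_k \mapsto \textbf{\textit{n}}_k$ has finite cokernel, so after applying $\mathrm{Hom}(-,\C^*)$ one obtains a short exact sequence of algebraic groups
$$
1 \longrightarrow \GS \longrightarrow \T^r_{\C} \longrightarrow \T^n_{\C} \longrightarrow 1,
$$
showing that $\GS$ is a closed subgroup of $\T^r_{\C}$ of dimension $r - n$ and that the open torus $\T^n_{\C}\subset\XS$ is identified with the quotient of the open subset $(\C^*)^r\subset U(\KS)$ by $\GS$.

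Next I would construct a $\GS$-invariant morphism $\pi : U(\KS) \to \XS$ chart by chart. For each cone $\sigma \in \Sigma$, let $U_\sigma = \mathrm{Spec}\,\C[\sigma^\vee \cap \Z^n] \subset \XS$ be the standard affine chart, identify $\sigma$ with the simplex $\{k : \textbf{\textit{n}}_k \in \sigma\}\in \KS$, and set $V_\sigma := (\underline{\C},\underline{\C^*})^{\sigma} \subset U(\KS)$. By Remark \ref{rmk: fan} one has $U(\KS) = \bigcup_{\sigma\in\Sigma} V_\sigma$. Define $\pi_\sigma : V_\sigma \to U_\sigma$ via the ring homomorphism
$$
\chi^{\textbf{\textit{m}}} \longmapsto \prod_{k=1}^{r} x_k^{\langle \textbf{\textit{n}}_k, \textbf{\textit{m}} \rangle}\qquad (\textbf{\textit{m}} \in \sigma^\vee \cap \Z^n).
$$
The exponent can be negative only when $\textbf{\textit{n}}_k\notin\sigma$, but on $V_\sigma$ the corresponding $x_k$ is invertible, so $\pi_\sigma$ is a well-defined morphism; the defining condition of $\GS$ gives immediate $\GS$-invariance. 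If $\tau$ is a face of $\sigma$ then $V_\tau \subset V_\sigma$ and $U_\tau\subset U_\sigma$ is a principal open subset, and the formula shows $\pi_\sigma|_{V_\tau} = \pi_\tau$, so the local pieces glue to a $\GS$-invariant morphism $\pi : U(\KS) \to \XS$, descending to $\bar{\pi} : U(\KS)/\GS \to \XS$.

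The main obstacle is to prove that $\bar{\pi}$ is an isomorphism, which I would split into three tasks. (a) \emph{Surjectivity} is checked stratum by stratum using the torus-orbit decomposition of $\XS$; the generic stratum is handled by the exact sequence above, and the closed strata by extending suitable one-parameter subgroups indexed by the $\textbf{\textit{n}}_k$. (b) \emph{Fiber identification} requires showing that two points of $V_\sigma$ with the same image differ by an element of $\GS$. This is the most delicate step, because when $\Z^r\to\Z^n$ fails to be surjective, $\GS$ acquires a nontrivial finite component group, and one must keep track of the full Pontryagin dual of $\Z^n$ rather than only its Lie algebra. (c) \emph{The geometric-quotient property}, promoting the set-theoretic bijection to a morphism of schemes, follows from the non-singularity assumption: simpliciality of $\Sigma$ makes all stabilizers for the $\GS$-action on $U(\KS)$ finite, so the quotient is geometric and the isomorphism $\XS \cong U(\KS)/\GS$ holds both scheme-theoretically and, in particular, topologically.
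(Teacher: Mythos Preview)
The paper does not give a proof of this statement: it is quoted from \cite{Cox1} and closed immediately with a \qed.  Your sketch is essentially a summary of Cox's own argument---constructing $\pi:U(\KS)\to\XS$ affine chart by affine chart via $\chi^{\textbf{\textit{m}}}\mapsto\prod_k x_k^{\langle\textbf{\textit{n}}_k,\textbf{\textit{m}}\rangle}$ and then checking that the induced map on the quotient is an isomorphism---so there is no divergence in strategy to discuss.

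Two small points are worth tightening.  First, the short exact sequence $1\to\GS\to\T^r_{\C}\to\T^n_{\C}\to1$ does not arise by applying $\mathrm{Hom}(-,\C^*)$ to the map $e_k\mapsto\textbf{\textit{n}}_k$ (that functor is contravariant and would produce a map $\T^n_{\C}\to\T^r_{\C}$); it comes from dualising the \emph{transpose} $\Z^n\to\Z^r$, $\textbf{\textit{m}}\mapsto(\langle\textbf{\textit{n}}_k,\textbf{\textit{m}}\rangle)_k$, which is injective exactly when the $\textbf{\textit{n}}_k$ span $\R^n$.  Second, your step~(c) invokes non-singularity (simpliciality) of $\Sigma$, but the theorem as stated in the paper carries only the spanning hypothesis.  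In Cox's original result the categorical-quotient identification $\XS\cong U(\KS)/\GS$ holds under that hypothesis alone, and simpliciality is the extra condition that upgrades it to a \emph{geometric} quotient (so that $U(\KS)/\GS$ really is the orbit space).  Since the rest of the present paper always assumes $\XS$ non-singular, this makes no difference for the applications that follow, but you should flag it if you intend to reprove the theorem in the generality in which it is stated here.
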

By using the above result we can obtain the following result
whose proof is postponed in the last part of this section.

\begin{lmm}[cf. \cite{Cox2}, Theorem 3.1]\label{lmm: Map D}
Suppose that
the set $\{\textit{\textbf{n}}_k\}_{k=1}^r$ 
of all primitive generators  spans $\R^n$, and 
let $f_k\in \C [z_0,\cdots ,z_m]$ be a homogenous polynomial of
the degree $d_k^*$ for each $1\leq k\leq r$ such that
the polynomials $\{f_k\}_{k\in \sigma}$ have no common real root
except ${\bf 0}_{m+1}\in \R^{m+1}$ for each $\sigma\in I(\KS)$.
Then 
there is a unique map
$f:\RP^m\to \XS$ such that the following diagram
\begin{equation*}\label{eq: homogenous-CD}
\begin{CD}
\R^{m+1}\setminus \{{\bf 0}\} @>(f_1,\cdots ,f_r)>> U(\mathcal{K}_{\Sigma})
=\mathcal{Z}_{\KS}(\C,\C^*)
\\
@V{\gamma_m}VV @V{q_{\Sigma}}VV
\\
\RP^m @>f>> U(\mathcal{K}_{\Sigma})/\GS =\XS
\end{CD}
\end{equation*}
is commutative
if and only if $\sum_{k=1}^rd_k^*\textit{\textbf{n}}_k={\bf 0}_n$,
where $\gamma_m:\R^{m+1}\setminus \{{\bf 0}\}\to \RP^m$ denotes
the canonical double covering and the map $q_{\Sigma}$ is the canonical projection
induced from the identification $($\ref{equ: homogenous}$)$.
\end{lmm}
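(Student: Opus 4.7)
The plan is to construct $f$ by first lifting the polynomial data to a map $F:\R^{m+1}\setminus\{{\bf 0}\}\to U(\KS)$ and then analyzing when $q_\Sigma\circ F$ descends through $\gamma_m$. Set $F(x)=(f_1(x),\ldots,f_r(x))$. The first step is to verify that $F$ takes values in $U(\KS)=\mathcal{Z}_{\KS}(\C,\C^*)$. For any nonzero $x$, let $\sigma(x)=\{k\in[r]:f_k(x)=0\}$; the common-zero hypothesis translates directly to $\sigma(x)\notin I(\KS)$, i.e.\ $\sigma(x)\in\KS$, and by definition of the polyhedral product this places $F(x)\in(\C,\C^*)^{\sigma(x)}\subset U(\KS)$.

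Composing with $q_\Sigma$ yields a continuous map $\tilde f=q_\Sigma\circ F:\R^{m+1}\setminus\{{\bf 0}\}\to\XS$. Since $\gamma_m$ is the quotient map for the $\R^*$-action by scalar multiplication, there exists a (unique, continuous) $f:\RP^m\to\XS$ making the diagram commute if and only if $\tilde f(\lambda x)=\tilde f(x)$ for every $\lambda\in\R^*$ and every $x$. Using homogeneity of each $f_k$, one computes $F(\lambda x)=\mu(\lambda)\cdot F(x)$, where $\mu(\lambda)=(\lambda^{d_1^*},\ldots,\lambda^{d_r^*})\in\T^r_{\C}$, so the descent condition becomes $\mu(\lambda)\cdot F(x)\in\GS\cdot F(x)$ for all such $\lambda$ and $x$.

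Next I would show that this orbit condition is equivalent to $\mu(\lambda)\in\GS$ for every $\lambda\in\R^*$. The backward direction is immediate. For the forward direction, which is the main obstacle, the plan is to choose $x_0\in\R^{m+1}\setminus\{{\bf 0}\}$ with $f_k(x_0)\neq 0$ for every $k$; at such a point the $\GS$-element $\nu$ satisfying $\mu(\lambda)F(x_0)=\nu\cdot F(x_0)$ is determined coordinate-wise and must equal $\mu(\lambda)$, so $\mu(\lambda)\in\GS$. Such an $x_0$ exists because each $f_k$ is a nonzero complex polynomial (it has a specified degree $d_k^*$), hence $\prod_k f_k$ is nonzero and vanishes only on a proper real algebraic subset of $\R^{m+1}$.

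Finally, the condition $\mu(\lambda)\in\GS$ for all $\lambda\in\R^*$ unfolds by definition of $\GS$ to $\lambda^{\langle\sum_k d_k^*\nn_k,\mm\rangle}=1$ for every $\lambda\in\R^*$ and every $\mm\in\Z^n$; since $\lambda\mapsto\lambda^e$ on $\R^*$ is constantly $1$ only when $e=0$, this is equivalent to $\sum_k d_k^*\nn_k={\bf 0}_n$. Uniqueness of $f$ then follows from the surjectivity of $\gamma_m$.
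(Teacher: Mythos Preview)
Your argument is correct and follows essentially the same route as the paper's proof: reduce descent through $\gamma_m$ to the orbit condition $\mu(\lambda)\cdot F(x)\in\GS\cdot F(x)$, then to $\mu(\lambda)\in\GS$, and finally unfold the definition of $\GS$. You are in fact more careful than the paper on two points it leaves implicit: the verification that $F$ lands in $U(\KS)$, and the forward implication ``$\mu(\lambda)\cdot F(x)\in\GS\cdot F(x)$ for all $x$ $\Rightarrow$ $\mu(\lambda)\in\GS$'', which you justify by picking $x_0$ with $F(x_0)\in(\C^*)^r$ and using freeness of the torus action there.
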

\begin{rmk}
{\rm
We call the map $f$ determined by an $r$-tuple $(f_1,\cdots ,f_r)$
of homogenous polynomials as \textit{an algebraic map}  
and we write $f=[f_1,\cdots ,f_r]$.
Note that two different such $r$-tuples of polynomials can determine the same maps. 
For example, suppose that  $(f_1,\cdots ,f_r)$
is the $r$-tuple of homogenous polynomials in $\C[z_0,\cdots ,z_m]$
of degree $d_1^*,\cdots ,d_r^*$ 
satisfying the condition
given in Lemma \ref{lmm: Map D}. 
Then if $(a_1,\cdots ,a_r)\in \N^r$ is the $r$-tuple
of positive integers and it satisfies the condition
$\sum_{k=1}^rd_k^*\textbf{\textit{n}}_k=
\sum_{k=1}^ra_k\textbf{\textit{n}}_k={\bf 0}_n,$
we can easily see that
$
f=[f_1,\cdots ,f_r]=[g^{a_1}f_1,\cdots ,g^{a_r}f_r]
=[h^{a_1}f_1,\cdots ,h^{a_r}f_r]
$
for
$g=\sum_{k=0}^mz_k^2$
and $h=(z_0+z_1)^2+\sum_{k=2}^mz_k^2$.
\qed
}
\end{rmk}
\par\vspace{1mm}\par
\paragraph{Assumptions}
Let $\Sigma$ be a fan in $\R^n$
satisfying the condition (\ref{eq: one dim cone})
as in Definition \ref{dfn: fan}.
From now on,
we assume  that the following two conditions hold.
\begin{enumerate}
\item[(\ref{equ: homogenous}.1)]
There is an $r$-tuple $D_*=(d_1^*,\cdots ,d_r^*)\in \N^r$ 
of positive integers 
such that
$\sum_{k=1}^rd_k^*\textit{\textbf{n}}_k={\bf 0}_n.$
\item[(\ref{equ: homogenous}.2)]
The set
$\{\textbf{\textit{n}}_k\}_{k=1}^r$ of primitive generators spans $\Z^n$
over $\Z$.
\end{enumerate}

\begin{rmk}\label{rmk: assumption}
{\rm
(i)
Note that $\XS$ is a compact iff $\bigcup_{\sigma\in \Sigma}\sigma =\R^n$ 
\cite[Theorem 3.4.1]{CLS}.
Note also that
$\XS$ is simply connected if and only if $\sum_{k=1}^r\Z\cdot \textit{\textbf{n}}_k=\Z^n$
(see Lemma \ref{lmm: XS} below).
Hence, the condition (\ref{equ: homogenous}.2) always holds if
$\XS$ is compact or simply connected.
On the other hand, if
the condition (\ref{equ: homogenous}.2) holds,
one can easily see that  the set $\{\textit{\textbf{n}}_k\}_{k=1}^r$ spans
$\R^n$ over $\R$, and
there is an  isomorphism (\ref{equ: homogenous}) for the space $\XS$.
\par
(ii)
We know that the condition (\ref{equ: homogenous}.1) holds
if $\XS$ is compact and non-singular \cite[Theorem 3.1]{Cox2}.
\par
(iii)
Let $\Sigma$ denote the fan in $\R^2$ given by
$\Sigma =\{\{{\bf 0}_2\},\mbox{Cone}(\textit{\textbf{n}}_1),
\mbox{Cone}(\textit{\textbf{n}}_2)\}$
for the standard basis $\textit{\textbf{n}}_1=\textit{\textbf{e}}_1=(1,0),$
$\textit{\textbf{n}}_2=\textit{\textbf{e}}_2=(0,1)$.
Then the toric variety $\XS$ of  $\Sigma$ is
$\C^2$ which has trivial homogenous coordinates.
It is clearly a (simply connected) smooth toric variety, and
the condition (\ref{equ: homogenous}.1) also  holds.
However, in this case,
$\sum_{k=1}^2d_k^*\textit{\textbf{n}}_k={\bf 0}_2$ iff
$(d_1^*,d_2^*)=(0,0)$. 
Hence, it follows from Lemma \ref{lmm: Map D} that there are no algebraic maps
$\RP^m\to \XS=\C^2$ other than the constant maps.
Assuming the condition
(\ref{equ: homogenous}.1) guarantees the existence of non-trivial
algebraic maps $\RP^m\to \XS$.
Of course, it would be sufficient to assume that 
$D=(d_1,\dots ,d_r)\not= (0,\dots 0)$ but if $d_i=0$ for some $i$, then the number $d(D,\Sigma)$ (defined in (\ref{eq: rmin})) is not a positive integer
and our assertion (Theorem \ref{thm: I} below) is vacuous.
For this reason, we will assume the condition $d_k^*\geq 1$ for each $1\leq k\leq r$ in
(\ref{equ: homogenous}.1).
\qed
}
\end{rmk}
\paragraph{Spaces of tuples of polynomials which define algebraic maps}
Let $\XS$ be a non-singular toric variety and
 make the identification $\XS =U(\KS)/\GS$.
 Let
 $z_0,\cdots ,z_m$ be variables.
\par
Now we
consider the space of all tuples of polynomials which define  
based algebraic maps.
\begin{dfn}
{\rm
(i)
For each $d,m\in \N$,  let $\mathcal{H}_m^d(\C)$ denote the space of all
homogenous polynomials $f(z_0,\cdots ,z_m)\in \C[z_0,\cdots ,z_m]$
of degree $d$.
\par
(ii)
For each $r$-tuple $D=(d_1,\cdots ,d_r)\in\N^r$, let
$\Pol_D^*(\RP^m,\XS)$ denote the space of
$r$-tuples
$$
f=(f_1(z_0,\cdots ,z_m),\cdots ,f_r(z_0,\cdots ,z_m))\in
\mathcal{H}^{d_1}_m(\C)\times \cdots \times \mathcal{H}^{d_r}_m(\C)
$$
of homogenous polynomials satisfying the following two conditions:
\begin{enumerate}
\item[(\ref{eq: alg}.1)]
$f(\textbf{\textit{x}})=(f_1(\textbf{\textit{x}}),\cdots ,f_r(\textbf{\textit{x}}))
\in U(\KS)$
for any point $\textbf{\textit{x}}=(x_0,\cdots ,x_m)\in \R^{m+1}\setminus
\{{\bf 0}_{m+1}\}$.
\item[(\ref{eq: alg}.2)]
$f(\textbf{\textit{e}}_1)=(f_1(\textbf{\textit{e}}_1),\cdots ,f_r(\textbf{\textit{e}}_1))
=(1,1,\cdots ,1)$, i.e.
the coefficient of $(z_0)^{d_k}$ in $f_k(z_0,\cdots ,z_m)$ is $1$ for each $1\leq k\leq r$,
where we write $\textbf{\textit{e}}_1=(1,0,\cdots ,0)\in \R^{m+1}$.
\qed
\end{enumerate}
}
\end{dfn}
\begin{dfn}
{\rm
We always assume the 
identification
$\XS=U(\KS)/\GS$,  denote by $[y_1,\cdots ,y_r]$ the point in $\XS$ represented by
$(y_1,\cdots ,y_r)\in U(\KS),$
and  choose
the two points $[1:0:\cdots :0]\in \RP^m$ and $*=[1,\cdots ,1]\in \XS$
as the base-points of $\RP^m$ and $\XS$ respectively.
\par\vspace{1mm}\par
Let  $D=(d_1,\cdots ,d_r)\in \N^r$ be an $r$-tuple of positive integers such that
$\sum_{k=1}^rd_k\textbf{\textit{n}}_k={\bf 0}_n$. 
Then
by using Lemma \ref{lmm: Map D}, for each $r$-tuple 
$$
f=(f_1(z_0,\cdots ,z_m),\cdots ,f_r(z_0,\cdots ,z_m))\in
\Pol_D^*(\RP^m,\XS)
$$ one can define 
based algebraic map
\begin{eqnarray}\label{eq: alg}
[f]&=&[f_1,\cdots ,f_r]:(\RP^m,[\textbf{\textit{e}}_1])\to (\XS, *)
\quad
\mbox{ by }
\end{eqnarray}
\begin{eqnarray}
[f]([\textbf{\textit{x}}])
&=&
[f_1(\textbf{\textit{x}}),\cdots ,f_r(\textbf{\textit{x}})]
\end{eqnarray}
for $[\textbf{\textit{x}}]=[x_0:\cdots :x_m]\in \RP^m$,
where
$\textbf{\textit{x}}=(x_0,\cdots ,x_m)
\in \R^{m+1}\setminus \{{\bf 0}_{m+1}\}$.
Hence, we obtain the natural map
\begin{equation}
i_{D,m}:\Pol_D^*(\RP^m,\XS) \to \Map^*_D(\RP^m,\XS)
\quad
\mbox{ given by}
\end{equation}
\begin{equation}
i_{D,m}(f)=[f]=[f_1,\cdots ,f_r]
\end{equation}
 for
$f=(f_1(z_0,\cdots ,z_m),\cdots ,f_r(z_0,\cdots ,z_m))\in
\Pol_D^*(\RP^m,\XS)$, 
where we denote by
$\Map_D^*(\RP^m,\XS)$ the
path-component of $\Map^*(\RP^m,\XS)$ 
which contains all algebraic maps of degree $D$.
\qed
}
\end{dfn}

\begin{rmk}
{\rm
When $m=1$,  we  make the identification $\RP^1=S^1=\R \cup \infty$ and choose the points $\infty$  as the base-point of
$\RP^1$.
Then, by setting $z=\frac{z_0}{z_1}$, 
we can view a homogenous polynomial $f(z_0,z_1)\in \C [z_0,z_1]$ of degree $d$
as a monic polynomial  $f_k(z)\in\C [z]$
of degree $d$.
}
\end{rmk}

Thus, when $m=1$, one can redefine the space 
$\Pol_D^*(S^1,\XS)$ as follows.
\begin{dfn}\label{dfn: holomorphic}
{\rm
(i)
Let $\P^d(\C)$ denote the space of all monic polynomials 
$f(z)=z^d+a_1z^{d-1}+\cdots +a_{d-1}z+a_d\in \C [z]$ of degree
$d$, and let
\begin{equation}\label{eq: Pd}
\P^D=\P^{d_1}(\C)\times \P^{d_2}(\C)\times\cdots \times \P^{d_r}(\C).
\end{equation}
Note that there is a homeomorphism
$\phi:\P^d(\C)\cong \C^d$ given by
$\phi (z^d+\sum_{k=1}^d a_kz^{d-k})
=(a_1,\cdots ,a_d)\in\C^d.$
\par
(ii)
For any $r$-tuple $D=(d_1,\cdots ,d_r)\in \N^r$, 
let $\Pol_D^*(S^1,\XS)$ denote the
space of all $r$-tuples
$(f_1(z),\cdots ,f_r(z))\in \P^D$ of monic polynomials
satisfying the following condition $(\dagger)$:
\begin{enumerate}
\item[
$(\dagger)$]
The polynomials
$f_{i_1}(z),\cdots ,f_{i_s}(z)$ have no common {\it real} root
for any $\sigma =\{i_1,\cdots ,i_s\}\in I(\KS)$,
i.e.
$(f_{i_1}(\alpha),\cdots ,f_{i_s}(\alpha))\not= {\bf 0}_s$
for any $\alpha \in  \R$.
\end{enumerate}
When the condition $\sum_{k=1}^rd_k\textit{\textbf{n}}_k={\bf 0}_n$ holds,
by identifying 
$\XS =U(\KS)/\GS$ and $\RP^1=S^1=\R\cup\infty$,
one can define a natural map
\begin{equation}\label{eq: iD-original}
i_D=i_{D,1}:\Pol_D^*(S^1,\XS)\to \Map^*(S^1,\XS)=\Omega \XS
\quad
\mbox{ by}
\end{equation}
\begin{equation}
i_D(f_1(z),\cdots ,f_r(z))(\alpha )=
\begin{cases}
[f_1(\alpha),\cdots ,f_r(\alpha)] & \mbox{ if }\alpha \in\R
\\
[1,1,\cdots ,1] & \mbox{ if }\alpha =\infty
\end{cases}
\end{equation}
for $(f_1(z),\cdots ,f_r(z))\in \Pol_D^*(S^1,\XS)$ and $\alpha \in S^1=\R\cup \infty$,
where
we choose the points $\infty$ and $[1,1,\cdots ,1]$
as the base-points of $S^1$ and $\XS$.
\par
\par\vspace{1mm}\par
Note that $\Pol_D^*(S^1,\XS)$ is simply connected 
(which will be proved in Proposition \ref{prp: simply connected})
and
that the map $\Omega q_{\Sigma}:
\Omega U(\KS)
\to
\Omega \XS$ is a universal covering
(which will be shown in Lemma \ref{lmm: XS}).
Thus, when $\sum_{k=1}^rd_k\textit{\textbf{n}}_k={\bf 0}_n$,
the map $i_D$ lifts to the space $\Omega \mathcal{Z}_{\KS}$
and there is a map
\begin{equation}\label{eq:jD-original}
j_D:\Pol_D^*(S^1,\XS)\to 
\Omega U(\KS)\simeq
\Omega \mathcal{Z}_{\KS}
\end{equation}
such that
\begin{equation}\label{eq: lift}
\Omega q_{\Sigma}\circ j_D=i_D.
\end{equation}
}
\end{dfn}

\begin{rmk}
{\rm
(i)
Note that $\Pol_D^*(S^1,\XS)$ is path-connected
(see (ii) of Remark \ref{rmk: ESigma}), and that
$\XS$ is simply connected if
the condition (\ref{equ: homogenous}.2) is
satisfied
(see (i) of Lemma \ref{lmm: XS}).
\par
(ii)
Even if $\sum_{k=1}^rd_k\textbf{\textit{n}}_k\not= {\bf 0}_n$
we can define the two maps 
$$
i_D:\Pol^*_D(S^1,\XS) \to \Omega \XS , \ \ 
j_D:\Pol^*_D(S^1,\XS) \to \Omega U(\KS)
$$
and this will be done in \S \ref{section: stabilization map}
(see (\ref{eq:jDDD}) 
in detail).
\qed
}
\end{rmk}

Now we need to define the numbers
$\rmin (\Sigma)$ and
$d(D,\Sigma)$.

\begin{dfn}\label{dfn: rnumber}
{\rm
Let $\Sigma$ be a fan in $\R^n$ as in Definition \ref{dfn: fan}.
\par
(i)
We say that a set $S=\{\textbf{\textit{n}}_{i_1},\cdots ,\textbf{\textit{n}}_{i_s}\}$
is  {\it primitive in }$\Sigma$
if $\mbox{Cone}(S)\notin \Sigma$ but $\mbox{Cone}(T)\in \Sigma$ for any proper
subset $T\subsetneqq S$.
\par
(ii)
For $D=(d_1,\cdots ,d_r)\in \N^r$
define integers $\rmin (\Sigma)$ and $d(D,\Sigma ;m)$
by
\begin{equation}\label{eq: rmin1}
\begin{cases}
\quad
\rmin (\Sigma)
&=\min\{s\in\N:
\{\textbf{\textit{n}}_{i_1},\cdots ,\textbf{\textit{n}}_{i_s}\}
\mbox{ is primitive in }\Sigma\},
\\
d(D,\Sigma ;m)&=(2\rmin  (\Sigma) -m-1)d_{\rm min}-2,
\ \ \mbox{where}
\\
\quad
d_{min}&=\min \{d_1,\cdots ,d_r\}.
\end{cases}
\end{equation}
In particular, when $m=1$ we also define the positive integer 
$d(D,\Sigma)$ by
\begin{equation}\label{eq: rmin}
d(D,\Sigma)=d(D,\Sigma;1)=(2\rmin  (\Sigma) -2)d_{\rm min}-2.
\qquad
\qed
\end{equation}
}
\end{dfn}

Now recall the following result.
\begin{thm}[\cite{KOY1}]\label{thm: KOY1}
Let $m\geq 2$ be a positive integer,  $\XS$ be a compact smooth toric variety
and $D=(d_1,\cdots ,d_r)\in \N^r$ be an $r$-tuple of positive integers such that
$\sum_{k=1}^rd_k\textbf{\textit{n}}_k={\bf 0}_n$.
Then the natural map
$$
i_{D,m}:\Pol_D^*(\RP^m,\XS)\to \Map^*_D(\RP^m,\XS)
$$
is a homology equivalence through dimension $d(D,\Sigma;m)$.
\qed
\end{thm}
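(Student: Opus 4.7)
The plan is to adapt the Vassiliev--Mostovoy spectral-sequence strategy to the polyhedral-product setting of $\XS$. Let $W_D\subset\prod_{k=1}^r\mathcal{H}^{d_k}_m(\C)$ denote the affine slice cut out by the normalization (\ref{eq: alg}.2), and identify $\Pol_D^*(\RP^m,\XS)$ with the complement $W_D\setminus\Sigma_D$ of the \emph{discriminant}
\[
\Sigma_D=\{(f_1,\dots,f_r)\in W_D:\exists\,\sigma\in I(\KS),\,\exists\,[\mathbf{x}]\in\RP^m\text{ with }f_i(\mathbf{x})=0\ \forall\,i\in\sigma\}.
\]
Alexander duality inside the complex affine space $W_D$ then converts the computation of $H_*(\Pol_D^*(\RP^m,\XS);\Z)$ into that of the Borel--Moore homology $H_*^{BM}(\Sigma_D;\Z)$, which I would attack via Vassiliev's non-degenerate simplicial resolution.

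Concretely, build $\pi\colon\mathcal{X}^D\to\Sigma_D$ whose fibre over $(f_1,\dots,f_r)$ is the simplex on the labelled forbidden set $\{([\mathbf{x}],\sigma):\sigma\in I(\KS)\text{ minimal},\ f_i(\mathbf{x})=0\ \forall i\in\sigma\}$; standard arguments show $\pi$ is a proper homotopy equivalence. The filtration of $\mathcal{X}^D$ by the number of labelled forbidden points then produces a spectral sequence $E^1_{p,q}\Rightarrow H_{p+q}^{BM}(\Sigma_D;\Z)$ whose $p$-th column is identified, via a Leray--Hirsch argument, with the Borel--Moore homology of a real affine bundle over the configuration space of $p$ labelled points in $\RP^m$, of fibre codimension on the order of $2\rmin(\Sigma)p$ (prescribing vanishing of $\rmin(\Sigma)$ polynomials at one point imposes that many complex linear conditions). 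The analogous construction for $\Map_D^*(\RP^m,\XS)$---using a truncated simplicial resolution of the continuous mapping-space discriminant in the spirit of the scanning-map setup of \S \ref{section: scanning map}---produces a parallel spectral sequence, and the natural inclusion $i_{D,m}$ induces a morphism between them.

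The homology stability statement then reduces to a column-by-column comparison. In filtration $p\le d_{\min}$, prescribing the joint vanishing of homogeneous polynomials of degrees $d_1,\dots,d_r$ at $p$ distinct points of $\RP^m$ is a surjective linear map (a Vandermonde-type argument), so the polynomial and continuous $E^1$-columns coincide in this range; in the complementary range, a dimension count on each column shows that the remaining classes lie above total degree $(2\rmin(\Sigma)-m-1)d_{\min}-2$, and translating back through Alexander duality produces the asserted stability bound $d(D,\Sigma;m)$. The main obstacle will be precisely the Vandermonde-style surjectivity step in homogeneous coordinates: one must verify that, under the compact smooth hypothesis together with $\sum_kd_k\mathbf{n}_k=\mathbf{0}_n$ and the $\GS$-structure of Theorem \ref{lmm: Cox}, the prescribed zero pattern indexed by each minimal non-face $\sigma\in I(\KS)$ can be simultaneously realized at any configuration of up to $d_{\min}$ real points with no collapse from the $\GS$-action---that is, the polynomial enumeration matches the continuous one on every column within the stable range.
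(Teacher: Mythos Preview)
This theorem is not proved in the present paper at all: it is quoted from \cite{KOY1} (note the \qed\ immediately after the statement), and indeed the paper explicitly remarks afterwards that ``the proof given in \cite{KOY1} uses the Stone--Weierstrass theorem and it cannot be applied when $\XS$ is not compact.'' So there is no proof here to compare against; one should compare your outline with the argument of \cite{KOY1} itself.

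Your treatment of the \emph{polynomial} side is essentially the right one, and matches both \cite{KOY1} and the $m=1$ analogue carried out in \S\ref{section: spectral sequence} of this paper: Alexander duality in the affine slice $W_D$, a non-degenerate simplicial resolution of $\Sigma_D$, identification of the $k$-th stratum as an affine bundle over a labelled configuration space, and a Vandermonde/interpolation count to see that the bundle rank behaves as expected for $k\le d_{\min}$.

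The gap is on the \emph{continuous} side. You propose to produce ``a parallel spectral sequence'' for $\Map^*_D(\RP^m,\XS)$ via ``a truncated simplicial resolution of the continuous mapping-space discriminant in the spirit of the scanning-map setup of \S\ref{section: scanning map}.'' Neither half of that sentence works for $m\ge 2$. The scanning argument of \S\ref{section: scanning map} is a genuinely one-dimensional device (horizontal scanning along $\R$ inside $\C$) and has no direct analogue for $\RP^m$ with $m\ge 2$; that is precisely why the present paper needs a different method from \cite{KOY1}. And there is no evident finite-dimensional ambient affine space containing $\Map^*_D(\RP^m,\XS)$ in which to form a ``mapping-space discriminant'' and run Alexander duality. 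What \cite{KOY1} (following Mostovoy \cite{Mo2,Mo3}) actually does is quite different: one compares the spectral sequences for $\Pol^*_D$ and $\Pol^*_{D'}$ with $D'\ge D$ via the stabilization map (exactly as in \S\ref{section: stabilization map} here), and then invokes a Stone--Weierstrass approximation argument---available because $\XS$ is compact---to identify the colimit $\varinjlim_D\Pol^*_D(\RP^m,\XS)$ with $\Map^*(\RP^m,\XS)$ up to homotopy. The compactness hypothesis in the statement is used precisely at this step, not in the Vandermonde count. Your outline never invokes Stone--Weierstrass, and without it (or a substitute) you have no bridge from the stable polynomial spectral sequence to the homology of $\Map^*_D(\RP^m,\XS)$.
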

Note that the above result does not hold for the case $m=1$.
For example, this can be seen in \cite{GKY2} for the case $\XS =\CP^n$. 
In fact, the main purpose of this paper is to investigate the  result corresponding to this theorem for the case $m=1$.

\paragraph{The main results}
More precisely  the main results of this paper are as follows.
\begin{thm}\label{thm: I}
Let $D=(d_1,\cdots ,d_r)\in \N^r$ be an $r$-tuple of positive integers 
satisfying the condition $\sum_{k=1}^rd_k\textit{\textbf{n}}_k={\bf 0}_n$,
and let $\XS$ be a simply connected non-singular toric variety such that 
the condition (\ref{equ: homogenous}.1) holds.
\par
Then the map
$$
j_D:\Pol_D^*(S^1,\XS) \to 
\Omega U(\KS)\simeq
\Omega \mathcal{Z}_{\KS}
$$
is a homotopy equivalence through dimension
$d(D,\Sigma).$
\end{thm}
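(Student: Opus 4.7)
My plan is to follow the Vassiliev--Segal--Mostovoy strategy outlined in \S\ref{section: spectral sequence}--\S\ref{section: stability}: factor the proof into a stabilization result (Theorem~\ref{thm: III}, Theorem~\ref{thm: IV}) and a scanning map result (Theorem~\ref{thm: scanning map}, Theorem~\ref{thm: V}), and then upgrade the resulting homology equivalence in a range to a homotopy equivalence via simple connectedness.

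First I would introduce, for each fixed $D$ with $\sum_{k=1}^r d_k\nn_k={\bf 0}_n$, the discriminant $\Sigma_D=\P^D\setminus \Pol_D^*(S^1,\XS)$. An $r$-tuple $(f_1,\dots,f_r)\in\P^D$ lies in $\Sigma_D$ if and only if there exist $\sigma=\{i_1,\dots,i_s\}\in I(\KS)$ and $\alpha\in\R$ with $f_{i_j}(\alpha)=0$ for every $j$. Following Vassiliev, I build a non-degenerate simplicial resolution $\widetilde{\Sigma}_D\to\Sigma_D$ (as recalled in \S\ref{section: simplicial resolution}) together with its natural truncation. The associated increasing filtration yields a spectral sequence whose $E_1$-page is computable from the combinatorics of $\KS$ and which, via Alexander duality in $\P^D\cong\C^{|D|}$ (with $|D|=\sum_k d_k$), converges in a range to $H^*(\Pol_D^*(S^1,\XS);\Z)$.

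The key observation in the $E_1$-computation is that the lowest-codimension stratum of the discriminant arises from a single primitive subset of $\KS$ of size exactly $\rmin(\Sigma)$ vanishing at one real point, which contributes real codimension of order $2\rmin(\Sigma)d_{\min}-1$. Using assumption~(\ref{equ: homogenous}.1) to supply an $\textit{\textbf{a}}=(a_1,\dots,a_r)\in\N^r$ with $\sum a_k\nn_k={\bf 0}_n$, I define the stabilization map $s_D\colon \Pol_D^*(S^1,\XS)\to \Pol_{D+\textit{\textbf{a}}}^*(S^1,\XS)$ by multiplying $f_k$ by a fixed real monic polynomial of degree $a_k$ without real roots. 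This is compatible with the resolutions, and a column-by-column comparison of the two spectral sequences shows they agree in total degree $\le d(D,\Sigma)=(2\rmin(\Sigma)-2)d_{\min}-2$; this is the content of Theorem~\ref{thm: III}.

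Passing to the colimit under stabilization and invoking the stable horizontal scanning map (Theorem~\ref{thm: scanning map}) identifies $\varinjlim_D \Pol_D^*(S^1,\XS)$, compatibly with $j_D$ via (\ref{eq: lift}), with $\Omega U(\KS)\simeq\Omega\mathcal{Z}_{\KS}$. Combined with stability, this yields that $j_D$ is already a homology equivalence through dimension $d(D,\Sigma)$ (Theorem~\ref{thm: V}). Since $\Pol_D^*(S^1,\XS)$ is simply connected (Proposition~\ref{prp: simply connected}) and $\Omega U(\KS)$ is simply connected under our hypotheses, the relative Hurewicz and Whitehead theorems upgrade the homology statement to the claimed homotopy equivalence through dimension $d(D,\Sigma)$. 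The main obstacle I anticipate is the $E_1$-comparison: one must verify that every stratum of $\widetilde{\Sigma}_D$ contributing in total degree $\le d(D,\Sigma)$ matches its counterpart in $\widetilde{\Sigma}_{D+\textit{\textbf{a}}}$, and extracting the precise constant $(2\rmin(\Sigma)-2)d_{\min}-2$ from the Alexander duality shift---rather than a weaker bound---requires careful combinatorial bookkeeping over the primitive subsets of $\KS$.
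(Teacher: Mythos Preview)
Your plan is essentially the paper's proof: Theorem~\ref{thm: I} is deduced from the stabilization theorem (Theorem~\ref{thm: III}/\ref{thm: IV}, proved via the truncated Vassiliev spectral sequence) together with the stable identification $j_{D+\infty}\colon \Pol^{\Sigma}_{D+\infty}\stackrel{\simeq}{\to}\Omega U(\KS)$ (Theorem~\ref{thm: V}, proved via the horizontal scanning map of Theorem~\ref{thm: scanning map}), and the homology-to-homotopy upgrade uses Proposition~\ref{prp: simply connected}. One small technical discrepancy: the paper stabilizes one coordinate at a time by adjoining a single \emph{variable} root outside $U_D$ (Definition~\ref{def: stabilization map}), which yields an \emph{open} embedding $\C\times\Sigma_D\hookrightarrow\Sigma_{D+\e_i}$ and hence the wrong-way map on $H^*_c$ needed for the Alexander-duality comparison~(\ref{eq: diagram}); your ``multiply by a fixed polynomial of degree $a_k$ without real roots'' gives only a closed affine embedding $\P^D\hookrightarrow\P^{D+\textit{\textbf{a}}}$, so you will need either to let those extra roots vary (recovering the paper's open embedding) or supply a different argument for comparing the two spectral sequences.
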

\begin{crl}\label{crl: I}
Under the same assumption as in Theorem \ref{thm: I},
the map
$i_D:\Pol_D^*(S^1,\XS)\to \Omega \XS$ induces an isomorphism
$$
(i_D)_*:\pi_k(\Pol_D^*(S^1,\XS))
\stackrel{\cong}{\longrightarrow}
\pi_k(\Omega \XS)\cong \pi_{k+1}(\XS)
$$
for any $2\leq k\leq d(D,\Sigma)$.
\end{crl}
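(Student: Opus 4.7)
The plan is to compose the homotopy equivalence through dimension supplied by Theorem \ref{thm: I} with the map of homotopy groups induced by the universal covering $\Omega q_{\Sigma}$. More precisely, equation (\ref{eq: lift}) gives the factorisation $i_D = \Omega q_{\Sigma} \circ j_D$, so on homotopy groups we obtain
\begin{equation*}
(i_D)_* = (\Omega q_{\Sigma})_* \circ (j_D)_*.
\end{equation*}
By Theorem \ref{thm: I}, $(j_D)_*$ is an isomorphism in the range $k \leq d(D,\Sigma)$. It therefore suffices to see that $(\Omega q_{\Sigma})_*\colon \pi_k(\Omega U(\KS)) \to \pi_k(\Omega \XS)$ is an isomorphism for all $k \geq 1$.

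For this I would appeal to Lemma \ref{lmm: XS}, which identifies $\Omega q_{\Sigma}\colon \Omega U(\KS) \to \Omega \XS$ with a universal covering. Any covering projection induces isomorphisms on $\pi_k$ for $k \geq 2$, and so $(i_D)_*$ is an isomorphism precisely in the range $2 \leq k \leq d(D,\Sigma)$. (The restriction $k \geq 2$ is necessary since $\pi_1(\Omega \XS) \cong \pi_2(\XS)$ may be non-trivial and $\Omega q_{\Sigma}$ is only injective on $\pi_1$; this is why the corollary starts at $k = 2$ rather than at $k = 1$.) Finally, the canonical isomorphism $\pi_k(\Omega \XS) \cong \pi_{k+1}(\XS)$ is the standard loop-space adjunction and requires no further argument.

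There is essentially no obstacle here: the work has already been carried out in Theorem \ref{thm: I} and Lemma \ref{lmm: XS}, and the corollary is simply the combination of those two statements together with the observation that a covering map is an isomorphism on higher homotopy. The only subtle point worth emphasising is that the lower bound $k \geq 2$ is forced by the covering, not by the dimension estimate coming from the Vassiliev spectral sequence argument.
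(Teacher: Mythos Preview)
Your proof is correct and follows exactly the same approach as the paper's own proof: the paper simply notes that $\Omega q_{\Sigma}$ is a universal covering (Lemma \ref{lmm: XS}) and that the assertion then follows from (\ref{eq: lift}) and Theorem \ref{thm: I}. Your version is just a more detailed spelling-out of this one-line argument, including the explicit reason why the range begins at $k=2$.
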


\begin{crl}\label{crl: I-2}
Let $D=(d_1,\cdots ,d_r)\in \N^r$ be an $r$-tuple of positive integers 
satisfying the condition $\sum_{k=1}^rd_k\textit{\textbf{n}}_k={\bf 0}_n$,
and let $\XS$ be a simply connected compact non-singular toric variety. 
Let $\Sigma (1)$ denote the set of all one dimensional cones in $\Sigma$, and $\Sigma_1$ any fan in $\R^n$ such that
$\Sigma (1)\subset \Sigma_1\subsetneqq \Sigma.$
\par
$\I$
Then $X_{\Sigma_1}$ is a non-singular open toric subvariety of $\XS$ and
the map
$$
j_D:\Pol_D^*(S^1,X_{\Sigma_1})\to 
\Omega U(\KS)\simeq
\Omega \mathcal{Z}_{\Sigma_1}
$$
is a homotopy equivalence through dimension
$d(D,\Sigma_1).$
\par
$\II$
Moreover,
the map
$i_D:\Pol_D^*(S^1,X_{\Sigma_1})\to \Omega X_{\Sigma_1}$ induces the isomorphism
$$
(i_D)_*:\pi_k(\Pol_D^*(S^1,X_{\Sigma_1}))
\stackrel{\cong}{\longrightarrow}
\pi_k(\Omega X_{\Sigma_1})\cong \pi_{k+1}(X_{\Sigma_1})
$$
for any $2\leq k\leq d(D,\Sigma_1)$.
\end{crl}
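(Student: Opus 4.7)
My plan is to deduce the Corollary from Theorem \ref{thm: I} applied to the smaller fan $\Sigma_1$ in place of $\Sigma$. Thus the work reduces to verifying that $X_{\Sigma_1}$ still satisfies all the hypotheses of Theorem \ref{thm: I}: namely, it must be a simply connected non-singular toric variety whose fan satisfies condition (\ref{equ: homogenous}.1) for the given tuple $D$ (and of course $\sum_{k=1}^{r} d_k\n_k={\bf 0}_n$ is already assumed).

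First I would record the toric-geometric facts. Since $\Sigma_1\subsetneqq \Sigma$, the inclusion of fans induces a natural open toric immersion $X_{\Sigma_1}\hookrightarrow \XS$, a standard consequence of the affine-cover description of toric varieties. Non-singularity of $X_{\Sigma_1}$ is inherited from $\XS$ because non-singularity is a purely local condition on each cone (each cone must be generated by a subset of a $\Z$-basis of the lattice), and every cone of $\Sigma_1$ is a cone of $\Sigma$.

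Next I would show that the remaining hypotheses transfer from $\Sigma$ to $\Sigma_1$. The crucial observation is that $\Sigma_1(1)=\Sigma(1)$: the inclusion $\Sigma(1)\subset \Sigma_1$ is assumed, and conversely every one-dimensional cone of $\Sigma_1$ is a cone of $\Sigma$ and hence lies in $\Sigma(1)$. Therefore $\Sigma$ and $\Sigma_1$ share the same set $\{\n_k\}_{k=1}^{r}$ of primitive generators. By Lemma \ref{lmm: XS}, simple connectedness of a non-singular toric variety is equivalent to $\sum_{k=1}^{r}\Z\cdot \n_k=\Z^n$; this condition is inherited from $\XS$ and yields simple connectedness of $X_{\Sigma_1}$. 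Similarly, condition (\ref{equ: homogenous}.1) depends only on the primitive generators, and compactness of $\XS$ guarantees that it holds for $\Sigma$ (via Theorem 3.1 of \cite{Cox2}, as recorded in Remark \ref{rmk: assumption}), hence it also holds for $\Sigma_1$.

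With these verifications in place, part \I\ follows directly by invoking Theorem \ref{thm: I} with the pair $(\Sigma_1,D)$: the map $j_D:\Pol_D^*(S^1,X_{\Sigma_1})\to \Omega U(\mathcal{K}_{\Sigma_1})\simeq \Omega\mathcal{Z}_{\mathcal{K}_{\Sigma_1}}$ is then a homotopy equivalence through dimension $d(D,\Sigma_1)$. Part \II\ follows from Corollary \ref{crl: I} applied to $(\Sigma_1,D)$, together with the factorization $\Omega q_{\Sigma_1}\circ j_D=i_D$ from (\ref{eq: lift}) and the fact, guaranteed by Lemma \ref{lmm: XS}, that $\Omega q_{\Sigma_1}:\Omega U(\mathcal{K}_{\Sigma_1})\to \Omega X_{\Sigma_1}$ is a universal covering and thus induces isomorphisms on $\pi_k$ for every $k\geq 2$. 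The main (and rather modest) obstacle is simply to stay alert that the numerical invariant $\rmin(\Sigma_1)$ controlling the stability range can be strictly smaller than $\rmin(\Sigma)$, since removing cones from $\Sigma$ may create new primitive sets; this only replaces $d(D,\Sigma)$ by the smaller (but correct) bound $d(D,\Sigma_1)$ and poses no genuine difficulty.
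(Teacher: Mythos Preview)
Your proposal is correct and follows essentially the same route as the paper: verify that the hypotheses of Theorem \ref{thm: I} and Corollary \ref{crl: I} transfer from $\Sigma$ to $\Sigma_1$ (using $\Sigma_1(1)=\Sigma(1)$ and compactness of $\XS$ to secure condition (\ref{equ: homogenous}.1)), then invoke those results with $\Sigma_1$ in place of $\Sigma$. The paper's own argument is terser---it records only that condition (\ref{equ: homogenous}.1) holds for $\Sigma_1$ and then cites Theorem \ref{thm: I} and Corollary \ref{crl: I}---but your more explicit verification of non-singularity, simple connectedness, and the equality $\Sigma_1(1)=\Sigma(1)$ is exactly what underlies that citation.
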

\begin{rmk}
{\rm
If $\XS$ is compact,  the condition (\ref{equ: homogenous}.1) is satisfied 
(see (ii) of Remark \ref{rmk: assumption}) and
one can prove the above two results (Theorem \ref{thm: I}, Corollary \ref{crl: I})
by using \cite[Theorem 6.2]{KOY1}.
However, the proof given in \cite{KOY1} uses the Stone-Weierstrass theorem and
it cannot be applied  when $\XS$ is not compact.
\qed
}
\end{rmk}

Finally consider the $r$-tuple
$D=(d_1,\cdots ,d_r)\in \N^r$
such that $\sum_{k=1}^rd_k\textit{\textbf{n}}_k\not= {\bf 0}_n$.
Then the map $i_D$ is not well-defined
(see Lemma \ref{lmm: Map D}).
However, even in this situation we have a map
$j_D:\Pol_D^*(S^1,\XS)\to \Omega \mathcal{Z}_{\KS}$ and
the following result holds.

\begin{thm}\label{thm: II}
Let $D=(d_1,\cdots ,d_r)\in \N^r$ be an $r$-tuple of positive integers 
such that $\sum_{k=1}^rd_k\textit{\textbf{n}}_k\not= {\bf 0}_n$,
and let $\XS$ be a simply connected non-singular toric variety such that 
the condition (\ref{equ: homogenous}.1) holds. 
\par
Then the map defined by (\ref{eq:jDD})
$$
j_D:\Pol_D^*(S^1,\XS)\to 
\Omega U(\KS)
\simeq
\Omega \mathcal{Z}_{\KS}
$$ 
 is  a homotopy equivalence thorough dimension
$d(D,\Sigma).$
\end{thm}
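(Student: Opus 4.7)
The proof parallels that of Theorem~\ref{thm: I}, with the essential difference that the algebraic map $i_D$ to $\Omega \XS$ is unavailable when $\sum d_k \textbf{n}_k \neq \textbf{0}_n$ (cf.\ Lemma~\ref{lmm: Map D}). My plan is to bypass $\Omega \XS$ entirely and work directly with the map $j_D: \Pol_D^*(S^1, \XS) \to \Omega U(\KS)$ defined in (\ref{eq:jDD}). The argument combines the three ingredients developed in \S\ref{section: stabilization map}--\S\ref{section: stability}: a stabilization map, a Vassiliev-type homotopy stability theorem, and the stable scanning map.

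First, I would consider the stabilization map $s_D: \Pol_D^*(S^1, \XS) \to \Pol_{D+D_*}^*(S^1, \XS)$ constructed in \S\ref{section: stabilization map}, induced by coordinate-wise multiplication by fixed monic real-root-free polynomials of degrees $d_k^*$. Since $\sum d_k^* \textbf{n}_k = \textbf{0}_n$ by (\ref{equ: homogenous}.1), the quantity $\sum d_k \textbf{n}_k$ is invariant under iterated stabilization, so the whole tower $\{\Pol_{D+ND_*}^*(S^1,\XS)\}_{N\geq 0}$ stays in the non-homogeneous case. By Theorem~\ref{thm: III}, $s_D$ is a homotopy equivalence through dimension $d(D, \Sigma)$. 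Crucially, the Vassiliev spectral sequence of \S\ref{section: spectral sequence}, whose $E_1$-term underlies Theorem~\ref{thm: III}, depends only on the simplicial complex $\KS$ and on the primitivity invariant $\rmin(\Sigma)$; neither notices the vanishing of $\sum d_k \textbf{n}_k$, so the same stability bound applies here as in the homogeneous setting.

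Next, one must check that the $j_D$'s are compatible with $s_D$, i.e.\ that $j_{D+D_*} \circ s_D \simeq j_D$ as maps to $\Omega U(\KS)$. Granted this, iterating gives a homotopy equivalence $\Pol_D^*(S^1,\XS) \to \mathrm{hocolim}_N \Pol_{D+ND_*}^*(S^1,\XS)$ through dimension $d(D,\Sigma)$, and invoking Theorem~\ref{thm: scanning map} identifies the colimit with $\Omega U(\KS) \simeq \Omega \mathcal{Z}_{\KS}$. Concatenating these equivalences with $j_\infty$ yields the conclusion of Theorem~\ref{thm: II}.

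The main obstacle is the compatibility check in the previous paragraph. In the homogeneous case it is essentially automatic, since $j_D$ is then characterized as the lift of $i_D$ through the universal covering $\Omega q_\Sigma$ (see (\ref{eq: lift})), and stabilization is visibly compatible with $i_D$. In the non-homogeneous case no such lift is available and the compatibility must be verified directly from the construction of $j_D$ in (\ref{eq:jDD}): one has to track how multiplication of each coordinate by a fixed real-root-free polynomial modifies the loop traced out in $U(\KS)$ and show that this modification is homotopic to the identity on $\Omega U(\KS)$, essentially because the added factor contributes a loop in the contractible subspace $(\C^*)^r \subset U(\KS)$ which is null-homotopic at the based-loop level.
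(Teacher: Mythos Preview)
Your proposal is workable in outline but substantially overcomplicated, and it overlooks the key structural point that makes the paper's proof nearly trivial: by definition (\ref{eq:jDDD}), the map $j_D$ in the non-homogeneous case is \emph{already} the composite
\[
j_D \;=\; j_{D_0}\circ s_{D,D_0},
\qquad D_0 = m_0 D_*,
\]
where $D_0$ satisfies $\sum_k (m_0 d_k^*)\textbf{\textit{n}}_k = m_0\cdot\textbf{0}_n = \textbf{0}_n$ and hence lies in the \emph{homogeneous} case. Theorem~\ref{thm: I} then gives that $j_{D_0}$ is a homotopy equivalence through dimension $d(D_0,\Sigma)$, Theorem~\ref{thm: IV} gives that $s_{D,D_0}$ is one through dimension $d(D,\Sigma)$, and since $d(D,\Sigma)\le d(D_0,\Sigma)$ the composite is a homotopy equivalence through $d(D,\Sigma)$. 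That is the entire proof.

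Your ``main obstacle'' --- the compatibility $j_{D+D_*}\circ s_D\simeq j_D$ --- is thus not an obstacle at all: it is the \emph{definition}. You set up a tower $\{\Pol^*_{D+ND_*}\}$ that deliberately stays non-homogeneous, but each $j_{D+ND_*}$ in that tower is itself defined by first stabilizing to a homogeneous $D_0^{(N)}$, so unwinding even one step already lands you in the situation of the paper's argument. There is no need to pass to a colimit or to invoke the scanning map (Theorem~\ref{thm: scanning map}) again; that machinery has already been absorbed into Theorem~\ref{thm: I} via Theorem~\ref{thm: V}. Minor points: the stability result you need for $s_{D,D+D_*}$ is Theorem~\ref{thm: IV}, not Theorem~\ref{thm: III}; your colimit is indexed by $D+ND_*$ rather than $D+k\textbf{\textit{e}}$ as in Theorem~\ref{thm: scanning map}, so a cofinality argument would be required; and your heuristic for the compatibility (the added factor traces a null-homotopic loop in $(\C^*)^r$) is the right intuition for the homogeneous lift but is irrelevant here, where the compatibility is purely formal.
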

\begin{crl}\label{crl: II}
Under the same assumption as Theorem \ref{thm: II}, the map
$$
i_D=(\Omega q_{\Sigma})\circ j_D:\Pol^*_D(S^1,\XS) \to \Omega \XS
$$
induces an isomorphism
$$
(i_D)_*:\pi_k(\Pol_D^*(S^1,\XS))
\stackrel{\cong}{\longrightarrow}
\pi_k(\Omega \XS)\cong \pi_{k+1}(\XS)
$$
for any $2\leq k\leq d(D,\Sigma)$.
\end{crl}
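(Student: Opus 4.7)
The plan is to deduce Corollary \ref{crl: II} directly from Theorem \ref{thm: II} together with the covering space structure of $\Omega q_\Sigma$ that is set up in \S \ref{section: introduction2}. The argument is formal and short; there is no genuine obstacle once Theorem \ref{thm: II} is in hand.

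First, I would invoke Lemma \ref{lmm: XS} (as indicated after Definition \ref{dfn: holomorphic}) to obtain that $\Omega q_\Sigma : \Omega U(\KS) \to \Omega \XS$ is a universal covering. Elementary covering space theory then yields that
\[
(\Omega q_\Sigma)_* : \pi_k(\Omega U(\KS)) \xrightarrow{\cong} \pi_k(\Omega \XS) \quad \text{for every } k \geq 2.
\]
Second, Theorem \ref{thm: II} asserts that $j_D$ is a homotopy equivalence through dimension $d(D,\Sigma)$, so by the convention in the remark following Theorem \ref{thm: S}, the induced map $(j_D)_* : \pi_k(\Pol_D^*(S^1,\XS)) \to \pi_k(\Omega U(\KS))$ is an isomorphism for every $k \leq d(D,\Sigma)$.

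Since $i_D = \Omega q_\Sigma \circ j_D$ by construction, the composition $(i_D)_* = (\Omega q_\Sigma)_* \circ (j_D)_*$ is an isomorphism precisely on the overlap of these two ranges, namely $2 \leq k \leq d(D,\Sigma)$. Combining this with the standard loop-space adjunction $\pi_k(\Omega \XS) \cong \pi_{k+1}(\XS)$ gives the stated conclusion. The lower bound $k \geq 2$ cannot be improved by this route: the deck transformation group of the universal covering $\Omega q_\Sigma$ identifies with a subgroup of $\pi_1(\Omega \XS) = \pi_2(\XS)$, which is generally non-trivial and so obstructs an isomorphism on $\pi_1$. Since the only substantive input is Theorem \ref{thm: II} itself, the \emph{main} challenge lies in proving that theorem (via the Vassiliev spectral sequence and scanning map machinery outlined in \S \ref{section: spectral sequence}--\S \ref{section: scanning map}), not in this corollary.
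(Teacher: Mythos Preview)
Your proof is correct and follows essentially the same approach as the paper: both arguments combine Theorem \ref{thm: II} with the fact (Lemma \ref{lmm: XS}) that $\Omega q_{\Sigma}$ is a universal covering, so that it induces isomorphisms on $\pi_k$ for $k\geq 2$. Your version merely spells out the covering-space step in slightly more detail than the paper's two-line proof.
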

Since $\XS$ is compact and $\Sigma (1)\subset \Sigma_1\subsetneqq \Sigma$,
the condition (\ref{equ: homogenous}.1) holds for the fan $\Sigma_1$ and
we obtain the following result by using Theorem \ref{thm: II} and
Corollary \ref{crl: II}.
\begin{crl}\label{crl: II-2}
Let $D=(d_1,\cdots ,d_r)\in \N^r$ be an $r$-tuple of positive integers,
and let $\XS$ be a simply connected compact non-singular toric variety. 
Let $\Sigma (1)$ denote the set of all one dimensional cones in $\Sigma$, and $\Sigma_1$ any fan in $\R^n$ such that
$\Sigma (1)\subset \Sigma_1\subsetneqq \Sigma.$
\par
$\I$
Then $X_{\Sigma_1}$ is a non-singular open toric subvariety of $\XS$ and
the map
$$
j_D:\Pol_D^*(S^1,X_{\Sigma_1})\to \Omega U(\mathcal{K}_{\Sigma_1})
\simeq
\Omega \mathcal{Z}_{\Sigma_1}
$$
is a homotopy equivalence through dimesnion
$d(D,\Sigma_1).$
\par
$\II$
Furthermore, the map
$i_D:\Pol_D^*(S^1,X_{\Sigma_1})\to \Omega X_{\Sigma_1}$ induces the isomorphism
$$
(i_D)_*:\pi_k(\Pol_D^*(S^1,X_{\Sigma_1}))
\stackrel{\cong}{\longrightarrow}
\pi_k(\Omega X_{\Sigma_1})\cong \pi_{k+1}(X_{\Sigma_1})
$$
for any $2\leq k\leq d(D,\Sigma_1)$.
\end{crl}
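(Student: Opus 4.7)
The plan is to deduce this corollary directly from Theorem \ref{thm: I} and Corollary \ref{crl: I} (in the case $\sum_{k=1}^r d_k \textbf{\textit{n}}_k = {\bf 0}_n$) or from Theorem \ref{thm: II} and Corollary \ref{crl: II} (in the opposite case), each applied to the subfan $\Sigma_1$ in place of $\Sigma$. The entire content of the proof is therefore the verification that $\Sigma_1$ inherits from $\Sigma$ every hypothesis demanded by those results.

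First I would check that $X_{\Sigma_1}$ is a non-singular open toric subvariety of $\XS$. Since $\Sigma_1 \subsetneqq \Sigma$ is a subfan, the inclusion of fans induces a $\T^n_\C$-equivariant open immersion $X_{\Sigma_1} \hookrightarrow \XS$; and because every cone of $\Sigma_1$ is already a cone of the non-singular fan $\Sigma$, it is generated by part of a $\Z$-basis of $\Z^n$, so $X_{\Sigma_1}$ is also non-singular.

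Next, the assumption $\Sigma(1) \subset \Sigma_1 \subsetneqq \Sigma$ forces $\Sigma_1(1) = \Sigma(1) = \{\rho_1,\ldots,\rho_r\}$, so the primitive generators attached to $\Sigma_1$ are precisely $\{\textbf{\textit{n}}_1,\ldots,\textbf{\textit{n}}_r\}$. Two consequences follow: (a) condition (\ref{equ: homogenous}.2) transfers verbatim from $\Sigma$ to $\Sigma_1$, so by Lemma \ref{lmm: XS} the space $X_{\Sigma_1}$ is simply connected; (b) because $\XS$ is compact and non-singular, Remark \ref{rmk: assumption}(ii) furnishes an $r$-tuple $D_* = (d_1^*,\ldots,d_r^*) \in \N^r$ with $\sum_{k=1}^r d_k^* \textbf{\textit{n}}_k = {\bf 0}_n$, and this very relation certifies condition (\ref{equ: homogenous}.1) for $\Sigma_1$.

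With all hypotheses in place, a direct application of Theorem \ref{thm: I} or \ref{thm: II} to $X_{\Sigma_1}$ immediately yields part (i), and part (ii) then follows from Corollary \ref{crl: I} or \ref{crl: II} applied to $\Sigma_1$, using that $\Omega q_{\Sigma_1}: \Omega U(\mathcal{K}_{\Sigma_1}) \to \Omega X_{\Sigma_1}$ is the universal covering map. The only step that requires any genuine care is the identification $\Sigma_1(1) = \Sigma(1)$, and consequently $\mathcal{K}_{\Sigma_1} \subset \KS$ on the same vertex set $[r]$; beyond that the argument is purely formal, since every datum entering the main theorems -- the primitive generators, the number $\rmin$, the underlying simplicial complex, the group $G_{\Sigma_1}$, and the spanning and homogeneity conditions -- depends only on $\Sigma_1$ itself and has been shown to inherit what is needed from $\Sigma$.
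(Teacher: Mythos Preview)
Your proposal is correct and follows essentially the same approach as the paper: verify that the subfan $\Sigma_1$ inherits conditions (\ref{equ: homogenous}.1) and (\ref{equ: homogenous}.2) from $\Sigma$ (via $\Sigma_1(1)=\Sigma(1)$ and compactness of $\XS$), then apply the main results to $\Sigma_1$. Your treatment is in fact more careful than the paper's terse proof, since you explicitly separate the cases $\sum_{k=1}^r d_k\textbf{\textit{n}}_k={\bf 0}_n$ and $\not={\bf 0}_n$ and invoke Theorem \ref{thm: I}/Corollary \ref{crl: I} or Theorem \ref{thm: II}/Corollary \ref{crl: II} accordingly, whereas the paper's proof cites only the latter pair.
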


\begin{rmk}
{\rm
The map $i_D$ is  well-defined only when
$\sum_{k=1}^rd_k\textit{\textbf{n}}_k={\bf 0}_n$.
Hence, when $\sum_{k=1}^rd_k\textit{\textbf{n}}_k\not= {\bf 0}_n$,
there is no map $j_D:\Pol_D^*(S^1,\XS)\to \Omega \mathcal{Z}_{\KS}$ satisfying
the condition (\ref{eq: lift}).
However, even if $\sum_{k=1}^rd_k\textit{\textbf{n}}_k\not={\bf 0}_n$
we can define the map
$j_D:\Pol_D^*(S^1,\XS)\to \Omega\mathcal{Z}_{\KS}$ 
by using the map $j_{D_0}$ (for some 
$D_0=(m_0d_{1}^*,\cdots ,m_0d_{r}^*)\in \N^r$)
and the stabilization map $s_{D,D_0}$.
In fact, In this case the maps $j_D$ and $i_D$ are defined by
$j_D=j_{D_0}\circ s_{D,D_0}$ and
$i_D=\Omega q_{\Sigma}\circ j_D$ 
(see  (\ref{eq:jDD}) and (\ref{eq: iD corollary}) in detail).
So the condition (\ref{eq: lift}) still holds.
\qed
}
\end{rmk}
\paragraph{Examples}
Since the case $\XS=\CP^n$ of Corollary \ref{crl: I-2}
was treated in \cite{KY1},  consider
the case that $\XS$ is the Hirzerbruch surface $H(k)$. 
\begin{dfn}\label{dfn: Hirzerbruch}
{\rm
For an integer $k\in \Z$, let
$H(k)$ be {\it the Hirzerbruch surface} defined by
\begin{equation*}
H(k)=\big\{([x_0:x_1:x_2],[y_1:y_2])\in\CP^2\times \CP^1:x_1y_1^k=x_2y_2^k
\big\}
\subset \CP^2\times \CP^1.
\end{equation*}
Since there are isomorphisms
$H(-k)\cong H(k)$ for $k \not=0$ and $H(0)\cong\CP^1\times \CP^1$,
without loss of generality we can assume that $k\geq 1$.
Let $\Sigma_k$ denote the fan
 in $\R^2$ given by
$$
\Sigma_k=
\big\{\mbox{Cone}({\bf n}_i,{\bf n}_{i+1})\ (1\leq i\leq 3),
\mbox{Cone}({\bf n}_4,{\bf n}_1),
\mbox{Cone}(\textit{\textbf{n}}_j)\ (1\leq j\leq 4),\ \{{\bf 0}\}\big\},
$$
where we set
$
\textit{\textbf{n}}_1
=(1,0),\  
\textit{\textbf{n}}_2
=(0,1),\  
\textit{\textbf{n}}_3
=(-1,k), 
\ 
\textit{\textbf{n}}_4
=(0,-1).
$
\par
It is easy to see that $\Sigma_k$ is the fan of $H(k)$
and that
 $H(k)$ is a compact non-singular toric variety.
Note that 
 $\Sigma_k(1)=\{\mbox{Cone}(\textit{\textbf{n}}_i):1\leq i\leq 4\}$.
Since $\{\textit{\textbf{n}}_1,\textit{\textbf{n}}_3\}$ and
$\{\textit{\textbf{n}}_2,\textit{\textbf{n}}_4\}$ are 
only primitive in $\Sigma_k$, $\rmin (\Sigma_k)=2$.
\par
Moreover, for $D=(d_1,d_2,d_3,d_4)\in \N^4$
the equality
$\sum_{k=1}^4d_k\textit{\textbf{n}}_k=\textbf{0}_2$ holds 
iff
$(d_3,d_4)=(d_1,kd_1+d_2)$.
Thus, if $\sum_{k=1}^4d_k\textit{\textbf{n}}_k=\textbf{0}_2$,
we have
$
\dmin =\min \{d_1,d_2,d_3,d_4\}
=\min\{d_1,d_2\}.
$
\qed
}
\end{dfn}

By Corollary \ref{crl: I-2} and Corollary \ref{crl: II-2} we have:
\begin{example}\label{example: H(k)}
Let $D=(d_1,d_2,d_3,d_4)\in \N^4$,
$k\in \N$, and 
$\Sigma$ be a fan in $\R^2$ such that
$
\Sigma_k(1)=\{\mbox{\rm Cone}(\textit{\textbf{n}}_i):1\leq i\leq 4\}
\subset \Sigma \subset \Sigma_k$
as in Definition \ref{dfn: Hirzerbruch}.
\par
$\I$ 
$\XS$ is a non-singular open toric subvariety of $H(k)$
if $\Sigma \subsetneqq \Sigma_k$.
\par
$\II$
If $\sum_{k=1}^4d_k\textbf{\textit{n}}_k={\bf 0}_2$,
the equality
$(d_3,d_4)=(d_1,kd_1+d_2)$ holds and
the map
$
j_D:\Pol_D^*(S^1,\XS)\to \Omega \mathcal{Z}_{\KS}
$
 is a homotopy equivalence
through dimension $2\min \{d_1,d_2\}-2$.
Moreover,
the map
$
i_D:\Pol_D^*(S^1,\XS)\to \Omega \XS
$
induces an isomorphism
$$
(i_D)_*:\pi_k(\Pol_D^*(S^1,\XS))
\stackrel{\cong}{\longrightarrow} \pi_k(\Omega \XS)\cong\pi_{k+1}(\XS)
$$
for any $2\leq k\leq 2 \min \{d_1,d_2\}-2$.
\par
$\III$
If $\sum_{k=1}^4d_k\textbf{\textit{n}}_k\not= {\bf 0}_2$,
the map
$
j_D:\Pol_D^*(S^1,\XS)\to \Omega \mathcal{Z}_{\KS}
$
 is a homotopy equivalence
through dimension $2\min \{d_1,d_2,d_3,d_4\}-2$, and
the map
$
i_D:\Pol_D^*(S^1,\XS)\to \Omega \XS
$
induces an isomorphism
$$
(i_D)_*:\pi_k(\Pol_D^*(S^1,\XS))
\stackrel{\cong}{\longrightarrow} \pi_k(\Omega \XS)\cong\pi_{k+1}(\XS)
$$
for any $2\leq k\leq 2 \min \{d_1,d_2,d_3,d_4\}-2$.
\qed
\end{example}

\begin{rmk}
{\rm
(i) There are $15$ non isomorphic (as varieties) non-compact subvarieties 
$\XS$ of $H(k)$
which satisfy the assumption of Corollary \ref{example: H(k)}.
\par
(ii)
Note that there is an isomorphism
\begin{equation}\label{eq: pi2XS}
\pi_2(\XS)\cong \Z^{r-n}
\qquad
(\mbox{see Lemma \ref{lmm: XS} below)}, 
\end{equation}
in general.
So $(r-n)$ of the $r$ positive integers $\{d_k\}_{k=1}^r$ can be 
chosen freely.
For example, in Example \ref{dfn: Hirzerbruch},
$(r,n)=(4,2)$ and
$r-n=4-2=2$. In this case, only two positive integers $d_1$, $d_2$ can be chosen freely
and the other integers $d_3$ and $d_4$ are determined by
uniquely as
$(d_3,d_4)=(d_1,kd_1+d_2).$
\qed
}
\end{rmk}
Finally in this section we give the proof of Lemma \ref{lmm: Map D}.
\begin{proof}[Proof of Lemma \ref{lmm: Map D}]
It follows from the assumptions that we can identify
$\XS= U(\KS)/\GS$ and that the map
$F=(f_1,\cdots ,f_r):\R^{m+1}\setminus \{{\bf 0}_{m+1}\}\to U(\KS)$ is well-defined.
Thus,
it suffices to show that
$F(\lambda \textit{\textbf{x}})=F(\textit{\textbf{x}})$ up to $\GS$-action
for any $(\lambda ,\textit{\textbf{x}})\in \R^*\times (\R^{m+1}\setminus \{{\bf 0}_{m+1}\})$
iff $\sum_{k=1}^rd_k\textit{\textbf{n}}_k={\bf 0}_n$.
Since $f_k$ is a homogenous polynomial of degree $d_k$, by using
(\ref{eq: multiplication})
we have
\begin{align*}
F(\lambda \textit{\textbf{x}})
&=
(f_1(\lambda \textit{\textbf{x}}),\cdots ,f_r(\lambda \textit{\textbf{x}}))
=(\lambda^{d_1}f_1(\textit{\textbf{x}}),\cdots ,\lambda^{d_r}f_r(\textit{\textbf{x}}))
\\
&=
(\lambda^{d_1},\cdots ,\lambda^{d_r})\cdot
(f_1(\textit{\textbf{x}}),\cdots ,f_r(\textit{\textbf{x}})).
\end{align*}
Hence, it remains to show that
$(\lambda^{d_1},\cdots ,\lambda^{d_r})\in \GS$
for any $\lambda \in \R^*$
iff $\sum_{k=1}^rd_k\textit{\textbf{n}}_k={\bf 0}_n$.
However,
$(\lambda^{d_1},\cdots ,\lambda^{d_r})\in \GS$ 
for any $\lambda \in \R^*$ iff
$$
\prod_{k=1}^r(\lambda^{d_k})^{\langle \textit{\textbf{n}}_k,\textit{\textbf{m}}\rangle}
=\lambda^{\langle \sum_{k=1}^rd_k\textit{\textbf{n}}_k,\textit{\textbf{m}}\rangle }
=1
\ \mbox{ for any }\textit{\textbf{m}}\in \Z^n
\ 
\Leftrightarrow
\ 
\sum_{k=1}^rd_k\textit{\textbf{n}}_k={\bf 0}_n
$$
and this completes the proof.
\end{proof}
\section{Polyhedral products and toric varieties}\label{section: toric variety}

In this section, 
we recall several  known results from toric topology.

\begin{dfn}[\cite{BP}; Example 6.39]
\label{def: moment-angle}
{\rm
Let $K$ be a simplicial complex on the vertex set $[r]$.
\par
(i)
Then we denote by
$\mathcal{Z}_K$
and $DJ(K)$ 
{\it the moment-angle complex} of $K$
 and
{\it the Davis-Januszkiewicz space} of $K$
defined by
\begin{equation}\label{DJ}
\mathcal{Z}_K = 
\mathcal{Z}_K(D^2,S^1),
\quad
DJ(K)\ = \ \mathcal{Z}_K(\CP^{\infty},*).
\end{equation}
\par
(ii)
Let $\iota_K:\mathcal{Z}_K=\mathcal{Z}_K(D^2,S^1)\to
DJ(K)=\mathcal{Z}_K(\CP^{\infty},*)$
denote the natural map induced from the following composite of maps
\begin{equation}\label{eq: pinch}
(D^2,S^1)\stackrel{pin}{\longrightarrow}(S^2,*)=(\CP^1,*)
\stackrel{\subset}{\longrightarrow}
(\CP^{\infty},*),
\end{equation}
where $pin:(D^2,S^1)\to (S^2,*)$ denotes the natural pinching map.
\qed
}
\end{dfn}

\begin{lmm}[\cite{BP}, \cite{KY9}]\label{Lemma: BP}
Let $K$ be a simplicial complex on the vertex set $[r]$.
\par
$\I$
The space $\mathcal{Z}_K$ is $2$-connected and
there is an $\T^r$-equivariant deformation retraction
\begin{equation}\label{eq: retr}
r_{\C}:U(K)=\mathcal{Z}_K(\C,\C^*)\stackrel{\simeq}{\longrightarrow}
\mathcal{Z}_K,
\quad\mbox{where }\ \T^r =(S^1)^r.
\end{equation}
\par
$\II$
There is a fibration sequence (up to homotopy)
\begin{equation}\label{eq: BP-DJ}
\mathcal{Z}_K \stackrel{\iota_K}{\longrightarrow}
DJ(K)
\stackrel{\subset}{\longrightarrow}
(\CP^{\infty})^r
\end{equation}
\par
$\III$
If $\XS$ is a simply connected non-singular toric variety satisfying the condition
(\ref{equ: homogenous}.1),
there is a fibration sequence (up to homotopy)
\begin{equation}\label{eq: DJ}
\T^n_{\C}=(\C^*)^n \stackrel{}{\longrightarrow}
\XS
\stackrel{p_{\Sigma}}{\longrightarrow}DJ(\KS).
\end{equation}
\par

\end{lmm}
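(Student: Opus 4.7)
The plan is to deduce all three parts from coordinate-wise constructions together with the polyhedral-product and Borel-construction formalism of \cite{BP}. For part (I), I would define $\rho : \C \to D^2$ by $\rho(z) = z/\max\{1,|z|\}$. This map is $S^1$-equivariant, fixes $D^2$ pointwise, sends $\C^*$ onto $S^1$, and is $S^1$-equivariantly homotopic to the identity through $h_t(z) = (1-t)z + t\rho(z)$. The product map $\rho^{\times r} : \C^r \to (D^2)^r$ is then $\T^r$-equivariant and carries every $(\C,\C^*)^{\sigma}$ into $(D^2,S^1)^{\sigma}$, hence restricts to the required $\T^r$-equivariant deformation retraction $r_{\C} : U(K) \to \mathcal{Z}_K$. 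For 2-connectedness of $\mathcal{Z}_K$, since $K$ is understood to contain all vertices of $[r]$, the result is standard: a Van Kampen argument gives $\pi_1(\mathcal{Z}_K) = 0$, using that each circle factor of the ambient torus bounds a disk inside the subcomplex $(D^2,S^1)^{\{k\}}$; combined with the vanishing of $H_2(\mathcal{Z}_K)$ via the Hochster-type formula of \cite{BP} and the Hurewicz theorem, this yields $\pi_2(\mathcal{Z}_K) = 0$ as well.

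For part (II), I would realise $\CP^\infty$ as the Borel construction $D^2 \times_{S^1} ES^1$ via the pinching map in (\ref{eq: pinch}), and apply the polyhedral-product functor to the map of pairs $(D^2, S^1) \to (\CP^\infty, *)$. This produces the homotopy equivalence $DJ(K) \simeq E\T^r \times_{\T^r} \mathcal{Z}_K$ and identifies the inclusion $DJ(K) \hookrightarrow (\CP^\infty)^r$ with the classifying map of the associated $\T^r$-bundle. Its homotopy fiber is $\mathcal{Z}_K$, which produces the fibration (\ref{eq: BP-DJ}).

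For part (III), I would invoke Theorem \ref{lmm: Cox} to write $\XS = U(\KS)/\GS$. Condition (\ref{equ: homogenous}.2) makes the homomorphism $\Z^n \to \Z^r$, $\textit{\textbf{m}} \mapsto (\langle \textit{\textbf{n}}_k, \textit{\textbf{m}} \rangle)_{k=1}^r$, have torsion-free cokernel; dualising yields a short exact sequence of algebraic tori $1 \to \GS \to \T^r_{\C} \to \T^n_{\C} \to 1$, whose quotient $\T^n_{\C}$ acts on $\XS$ as the defining algebraic torus. Under the given hypotheses the $\GS$-action on $U(\KS)$ is free, so the standard Borel-construction identity $EH \times_H X \simeq EQ \times_Q (X/G)$ applied to $1 \to G \to H \to Q \to 1$ gives
\[
DJ(\KS) \simeq E\T^r \times_{\T^r} \mathcal{Z}_{\KS} \simeq E\T^r_{\C} \times_{\T^r_{\C}} U(\KS) \simeq E\T^n_{\C} \times_{\T^n_{\C}} \XS,
\]
using part (I) for the first equivalence. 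Extending the resulting Borel fibration $\XS \to DJ(\KS) \to B\T^n_{\C}$ one step in the Puppe sequence then produces (\ref{eq: DJ}), since $\Omega B\T^n_{\C} \simeq \T^n_{\C}$.

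The main obstacle will be (III): I must verify freeness of the $\GS$-action on $U(\KS)$ (so that the geometric quotient agrees with the homotopy quotient) and carefully track the compatibility of the tori $\GS$, $\T^r_{\C}$, $\T^n_{\C}$ through the iterated Borel construction. Both reduce to bookkeeping using conditions (\ref{equ: homogenous}.1), (\ref{equ: homogenous}.2), nonsingularity, and simple connectivity of $\XS$; parts (I) and (II) are essentially formal once the coordinate-wise polyhedral-product model of \cite{BP} is in place.
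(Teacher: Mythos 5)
Your parts (II) and (III) are sound in outline and follow essentially the same route as the sources the paper cites (the Borel-construction identification $DJ(K)\simeq E\T^r\times_{\T^r}\mathcal{Z}_K$ for (\ref{eq: BP-DJ}), and for (\ref{eq: DJ}) the comparison of Borel constructions along $1\to\GS\to\T^r_{\C}\to\T^n_{\C}\to 1$ together with the freeness of the $\GS$-action, which is the paper's Lemma \ref{lmm: principal}); note the paper itself disposes of the whole lemma by citing \cite[Theorems 6.29, 6.33, 8.9, Corollary 6.30]{BP} and \cite[Proposition 4.4]{KY9}. The genuine gap is in part (I). Your map $\rho(z)=z/\max\{1,|z|\}$ does \emph{not} send $\C^*$ onto $S^1$: its image on $\C^*$ is $\{z:0<|z|\le 1\}=D^2\setminus\{0\}$, so $\rho^{\times r}$ does not carry $(\C,\C^*)^{\sigma}$ into $(D^2,S^1)^{\sigma}$, and in fact does not map $U(K)$ into $\mathcal{Z}_K$ at all. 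Concretely, for $K=\{\emptyset,\{1\},\{2\}\}$ on two vertices one has $U(K)=\C^2\setminus\{(0,0)\}$ and $\mathcal{Z}_K=(D^2\times S^1)\cup(S^1\times D^2)\cong S^3$, and the point $(\tfrac12,\tfrac12)\in U(K)$ is fixed by $\rho^{\times 2}$ but does not lie in $\mathcal{Z}_K$; so your $r_{\C}$ is not even a map into $\mathcal{Z}_K$, let alone a deformation retraction.

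Moreover this cannot be repaired by a cleverer choice of $\rho$: a single coordinate-wise map would have to satisfy $\rho(\C^*)\subseteq S^1$ while restricting to the identity on the disk coordinates of points of $\mathcal{Z}_K$ (for any vertex $\{k\}\in K$ the $k$-th coordinate of a point of $\mathcal{Z}_K$ ranges over all of $D^2$), and these requirements already clash at any $z$ with $0<|z|<1$. The difficulty is that which coordinates must be pushed out to $S^1$ depends on the face $\sigma$, while a given point of $U(K)$ lies in several pieces $(\C,\C^*)^{\sigma}$; this is exactly why Buchstaber--Panov build the $\T^r$-equivariant deformation retraction face-wise, by retracting the orbit space $U(K)/\T^r\subset\R^r_{\geq 0}$ onto the cubical subcomplex corresponding to $\mathcal{Z}_K$ simplex by simplex and then lifting equivariantly; that is the content of \cite[Theorem 8.9]{BP}, which is what the paper quotes, and your part (I) needs either that argument or the citation. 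Once (\ref{eq: BP-DJ}) is available, the $2$-connectedness is also cleaner than your Van Kampen/Hochster route: since every vertex $\{k\}$ lies in $K$, all cells of $(\CP^{\infty})^r$ of dimension $\leq 3$ lie in $DJ(K)$, so the pair $((\CP^{\infty})^r,DJ(K))$ is $3$-connected and the homotopy fibre $\mathcal{Z}_K$ is $2$-connected.
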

\begin{proof}
The assertions  follow from \cite[Theorem 6.33, Theorem 8.9]{BP},
\cite[Theorem 6.29, Corollary 6.30]{BP} and
\cite[Proposition 4.4]{KY9}.
\end{proof}
\begin{lmm}[\cite{Pa1}; (6.2) and Proposition 6.7]\label{lmm: principal}
Let $\XS$ be a non-singular toric variety such that the condition 
(\ref{equ: homogenous}.2) holds.
Then
there is an isomorphism 
\begin{equation}\label{eq: GS-eq}
\GS\cong \T^{r-n}_{\C}=(\C^*)^{r-n},
\end{equation}
and the group $\GS$ acts on the space 
$\mathcal{Z}_{\mathcal{K}_{\Sigma}}(\C,\C^*)$
freely. 
So there is a principal
$\GS$-bundle
\begin{equation}\label{eq: principal}
q_{\Sigma}:U(\KS)=
\mathcal{Z}_{\mathcal{K}_{\Sigma}}(\C,\C^*)\to\XS.
\qquad
\qed
\end{equation}
\end{lmm}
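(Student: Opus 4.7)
The plan is to verify both assertions by exhibiting $\GS$ concretely via lattice duality and then using non-singularity of $\Sigma$ to rule out non-trivial stabilizers.

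First, I would establish the isomorphism $\GS \cong (\C^*)^{r-n}$. Consider the $\Z$-linear map $A \colon \Z^r \to \Z^n$ defined by $A(\textbf{\textit{e}}_k) = \textbf{\textit{n}}_k$. By condition (\ref{equ: homogenous}.2), the primitive generators $\{\textbf{\textit{n}}_k\}_{k=1}^r$ span $\Z^n$ over $\Z$, so $A$ is surjective; since $\Z^n$ is free, the sequence $0 \to \ker(A) \to \Z^r \xrightarrow{A} \Z^n \to 0$ splits and $\ker(A) \cong \Z^{r-n}$. Dualizing yields a short exact sequence $0 \to \Z^n \xrightarrow{A^T} \Z^r \to \Z^{r-n} \to 0$, and applying the exact functor $\text{Hom}_{\Z}(-,\C^*)$ produces
$$
1 \to (\C^*)^{r-n} \to (\C^*)^r \xrightarrow{\psi} (\C^*)^n \to 1,
$$
where $\psi(\mu_1,\dots,\mu_r)(\textbf{\textit{m}}) = \prod_{k=1}^r \mu_k^{\langle \textbf{\textit{n}}_k,\textbf{\textit{m}}\rangle}$. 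Comparing with the defining equation of $\GS$ shows $\GS = \ker(\psi) \cong (\C^*)^{r-n}$, giving (\ref{eq: GS-eq}).

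Second, I would verify that the $\GS$-action on $U(\KS)$ is free. Given $\textbf{\textit{x}} = (x_1,\dots,x_r) \in U(\KS)$, set $\sigma(\textbf{\textit{x}}) = \{k \in [r] : x_k = 0\}$; from the definition (\ref{IK}), $\sigma(\textbf{\textit{x}}) \in \KS$, so $\mbox{Cone}(\textbf{\textit{n}}_k : k \in \sigma(\textbf{\textit{x}})) \in \Sigma$. Non-singularity of $\XS$ means that the primitive generators $\{\textbf{\textit{n}}_k\}_{k \in \sigma(\textbf{\textit{x}})}$ of this cone extend to a $\Z$-basis of $\Z^n$. The stabilizer of $\textbf{\textit{x}}$ under the coordinatewise action (\ref{eq: multiplication}) consists of those $\mu \in \GS$ with $\mu_k = 1$ for all $k \notin \sigma(\textbf{\textit{x}})$. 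For each $k_0 \in \sigma(\textbf{\textit{x}})$, by choosing $\textbf{\textit{m}} \in \Z^n$ dual to $\textbf{\textit{n}}_{k_0}$ relative to the extended basis, so that $\langle \textbf{\textit{n}}_k,\textbf{\textit{m}}\rangle = \delta_{k,k_0}$ for $k \in \sigma(\textbf{\textit{x}})$, and substituting into the defining relation $\prod_k \mu_k^{\langle \textbf{\textit{n}}_k,\textbf{\textit{m}}\rangle}=1$, one obtains $\mu_{k_0} = 1$. Hence the stabilizer is trivial.

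Finally, having a free holomorphic action of the complex algebraic torus $\GS \cong (\C^*)^{r-n}$ on the complex manifold $U(\KS)$ whose quotient is the Hausdorff variety $\XS$ (by Theorem \ref{lmm: Cox}), the projection $q_{\Sigma} \colon U(\KS) \to \XS$ acquires the structure of a principal $\GS$-bundle via the standard slice theorem for free algebraic torus actions on smooth varieties. The main obstacle in this plan is the freeness step: without non-singularity the stabilizers would typically be finite but non-trivial (producing orbifold singularities in the quotient), and extracting the explicit dual vector $\textbf{\textit{m}}$ for each cone generator is precisely where the non-singularity hypothesis is used. The remaining passage from a free complex-torus action to a principal bundle is standard and causes no difficulty.
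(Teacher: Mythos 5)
The paper does not prove this lemma at all: it is quoted verbatim from Panov \cite{Pa1} ((6.2) and Proposition 6.7), so your proposal is not following or deviating from an argument in the text but reconstructing the standard proof from the literature (Cox \cite{Cox1}, Panov). Your reconstruction is essentially right. The identification $\GS\cong(\C^*)^{r-n}$ via the split exact sequence $0\to\ker A\to\Z^r\xrightarrow{A}\Z^n\to 0$, its dual, and exactness of $\mathrm{Hom}_{\Z}(-,\C^*)$ is correct and is exactly where the $\Z$-spanning hypothesis (\ref{equ: homogenous}.2) enters; and the freeness argument is correct: for $\textbf{\textit{x}}\in U(\KS)$ the zero set $\sigma(\textbf{\textit{x}})$ lies in $\KS$, non-singularity lets you extend $\{\textbf{\textit{n}}_k\}_{k\in\sigma(\textbf{\textit{x}})}$ to a $\Z$-basis of $\Z^n$, and evaluating the defining relation of $\GS$ on the dual basis vectors kills the stabilizer.

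The one step you should tighten is the last one. A ``slice theorem'' for a non-compact group such as $(\C^*)^{r-n}$ requires the action to be \emph{proper} (or the quotient map to be an \'etale/fppf torsor), and freeness together with a Hausdorff quotient is not by itself enough to produce local triviality; as stated, this step is an appeal to a theorem whose hypotheses you have not verified. In the toric situation the fix is cheap and uses the same data as your freeness argument: for each cone $\sigma\in\KS$ let $U(\sigma)=\{\textbf{\textit{x}}\in\C^r: x_k\neq 0 \mbox{ for } k\notin\sigma\}$, so that $U(\KS)=\bigcup_{\sigma\in\KS}U(\sigma)$ and $U(\sigma)$ maps onto the affine chart of $\XS$ corresponding to $\sigma$. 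Because $\{\textbf{\textit{n}}_k\}_{k\in\sigma}$ extends to a $\Z$-basis, the dual vectors you already constructed give characters of $\T^r_{\C}$ which split $U(\sigma)$ $\GS$-equivariantly as (chart)$\times\GS$; this exhibits explicit local trivializations of $q_{\Sigma}$ over the affine charts, and hence the principal $\GS$-bundle structure, without invoking a slice theorem. Alternatively one may check properness of the action directly, but the explicit chart trivialization is the route taken in the cited sources.
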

\begin{lmm}\label{lmm: XS}
$\I$
The space $\XS$ is simply connected if and only if
the condition (\ref{equ: homogenous}.2) is
satisfied.
\par
$\II$
If $\XS$ is a simply connected  non-singular toric variety, $\pi_2(\XS)=\Z^{r-n}$ and 
the map
\begin{equation}\label{eq: universal}
\Omega q_{\Sigma}:\Omega \mathcal{Z}_{\KS}\to \Omega \XS
\end{equation}
is a universal covering projection with fiber $\Z^{r-n}$.
\end{lmm}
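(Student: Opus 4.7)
The plan is to treat the two parts of the lemma separately, but both by reducing to Lemma \ref{lmm: principal} (the principal $\GS$-bundle $q_{\Sigma}\colon U(\KS)\to\XS$) together with Lemma \ref{Lemma: BP} (which says $U(\KS)\simeq \mathcal{Z}_{\KS}$ is $2$-connected).

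\medskip

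For part (I), I would appeal to the well-known description of the fundamental group of a toric variety (see for instance Corollary 12.1.10 of \cite{CLS}): for any fan $\Sigma$ in $\R^n$ with $\{\textit{\textbf{n}}_k\}_{k=1}^r$ spanning $\R^n$, one has $\pi_1(\XS)\cong \Z^n/N$, where $N=\sum_{k=1}^r\Z\cdot\textit{\textbf{n}}_k$. Thus $\pi_1(\XS)=0$ exactly when $N=\Z^n$, which is condition (\ref{equ: homogenous}.2). Alternatively, I could derive this directly from the principal $\GS$-bundle of Lemma \ref{lmm: principal} by writing down the cokernel-of-characters description of $\pi_1(\XS)$ coming from the long exact sequence applied to $\GS\to U(\KS)\to\XS$ (once one notes $U(\KS)$ is simply connected and $\pi_1(\GS)$ is identified with $\mathrm{Hom}(\Z^n,\Z)$ mapping into $\Z^r$ via the matrix $(\textit{\textbf{n}}_1,\dots,\textit{\textbf{n}}_r)$).

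\medskip

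For part (II), assume (\ref{equ: homogenous}.2) holds. From Lemma \ref{lmm: principal} we have the principal bundle $\GS\to U(\KS)\to \XS$ with $\GS\cong (\C^*)^{r-n}\simeq T^{r-n}$, so $\pi_1(\GS)\cong \Z^{r-n}$ and $\pi_k(\GS)=0$ for $k\geq 2$. By Lemma \ref{Lemma: BP}(i), $U(\KS)\simeq\mathcal{Z}_{\KS}$ is $2$-connected, so $\pi_1(U(\KS))=\pi_2(U(\KS))=0$. The long exact homotopy sequence
\begin{equation*}
\pi_2(U(\KS))\to\pi_2(\XS)\to\pi_1(\GS)\to\pi_1(U(\KS))
\end{equation*}
then gives $\pi_2(\XS)\cong \pi_1(\GS)\cong\Z^{r-n}$.

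\medskip

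For the universal covering claim, I would loop the fibration $\GS\to U(\KS)\to\XS$ to obtain the homotopy fibration $\Omega\GS\to \Omega U(\KS)\to\Omega\XS$. Since $\GS\simeq T^{r-n}$, the loop space $\Omega\GS$ has the homotopy type of the discrete group $\pi_1(\GS)=\Z^{r-n}$; a fibration with discrete fibers is a covering projection. Because $U(\KS)\simeq\mathcal{Z}_{\KS}$ is $2$-connected, $\Omega U(\KS)$ is simply connected, so $\Omega q_{\Sigma}\colon \Omega\mathcal{Z}_{\KS}\simeq\Omega U(\KS)\to\Omega\XS$ is the universal covering, with deck group $\Z^{r-n}$.

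\medskip

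The only delicate point I anticipate is verifying that $\Omega\GS$ really is (homotopically) the discrete space $\Z^{r-n}$ and that the resulting fibration is a genuine covering in the topological sense, rather than merely a homotopy fibration with discrete fiber; this is a standard matter of choosing a convenient model (e.g.\ using the inclusion $T^{r-n}\hookrightarrow\GS$ as a deformation retract and the fact that $\Omega T^{r-n}$ splits as $\Z^{r-n}\times\Omega_0 T^{r-n}$ with contractible identity component), but it is where the argument needs most care.
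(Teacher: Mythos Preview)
Your proposal is correct and follows essentially the same route as the paper: part (i) is handled by citing \cite[Theorem 12.1.10]{CLS}, and part (ii) is obtained from the long exact homotopy sequence of the principal $\GS$-bundle (\ref{eq: principal}) combined with $\GS\cong (\C^*)^{r-n}$ from (\ref{eq: GS-eq}) and the $2$-connectedness of $U(\KS)\simeq\mathcal{Z}_{\KS}$ from Lemma \ref{Lemma: BP}. Your discussion of the ``delicate point'' is more careful than the paper, which simply records that $\Omega q_{\Sigma}$ is a universal covering \emph{up to homotopy}.
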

\begin{proof}
The assertion (i) easily follows from
\cite[Theorem 12.1.10]{CLS}, and
it suffices to show (ii).
By Lemma \ref{Lemma: BP} and (\ref{eq: retr}),  $\mathcal{Z}_{\KS}(\C,\C^*)
\simeq \mathcal{Z}_{\KS}$ is $2$-connected.
Then by using the homotopy exact sequence of the 
principal $\GS$-bundle (\ref{eq: principal}) and the isomorphism 
(\ref{eq: GS-eq}), we easily 
see that  $\pi_2(\XS)= \Z^{r-n}$
and that $\Omega q_{\Sigma}$ is a universal covering (up to homotopy).
\end{proof}
\begin{rmk}\label{rmk: Omega XS}
{\rm
If $\XS$ is a simply connected non-singular toric variety satisfying the condition
(\ref{equ: homogenous}.1), one can show that there is a homotopy equivalence
\begin{equation}\label{eq: omegaX}
\Omega \XS \simeq \Omega \mathcal{Z}_{\KS}\times \T^{r-n}.
\end{equation}
Although this can be proved by using 
(\ref{eq: retr}), (\ref{eq: GS-eq}) and (\ref{eq: principal}), we do not need this fact
and omit the detail.
\qed
}
\end{rmk}
\section{Simplicial resolutions}\label{section: simplicial resolution}
In this section, we summarize  the definitions of the non-degenerate simplicial resolution
and the associated truncated simplicial resolutions (\cite{Mo2},  \cite{Va}).
\begin{dfn}\label{def: def}
{\rm
(i) For a finite set $\textbf{\textit{v}} =\{v_1,\cdots ,v_l\}\subset \R^N$,
let $\sigma (\textbf{\textit{v}})$ denote the convex hull spanned by 
$\textbf{\textit{v}}.$
Let $h:X\to Y$ be a surjective map such that
$h^{-1}(y)$ is a finite set for any $y\in Y$, and let
$i:X\to \R^N$ be an embedding.
Let  $\mathcal{X}^{\Delta}$  and $h^{\Delta}:{\mathcal{X}}^{\Delta}\to Y$ 
denote the space and the map
defined by
\begin{equation}
\mathcal{X}^{\Delta}=
\big\{(y,u)\in Y\times \R^N:
u\in \sigma (i(h^{-1}(y)))
\big\}\subset Y\times \R^N,
\ h^{\Delta}(y,u)=y.
\end{equation}
The pair $(\mathcal{X}^{\Delta},h^{\Delta})$ is called
{\it the simplicial resolution of }$(h,i)$.
In particular, it 
is called {\it a non-degenerate simplicial resolution} if for each $y\in Y$
any $k$ points of $i(h^{-1}(y))$ span $(k-1)$-dimensional simplex of $\R^N$.
\par
(ii)
For each $k\geq 0$, let $\mathcal{X}^{\Delta}_k\subset \mathcal{X}^{\Delta}$ be the subspace
of
the union of the $(k-1)$-skeletons of the simplices over all the points $y$ in $Y$
given by
\begin{equation}
\mathcal{X}_k^{\Delta}=\big\{(y,u)\in \mathcal{X}^{\Delta}:
u \in\sigma (\textbf{\textit{v}}),
\textbf{\textit{v}}=\{v_1,\cdots ,v_l\}\subset i(h^{-1}(y)),l\leq k\big\}.
\end{equation}
We make the identification $X=\mathcal{X}^{\Delta}_1$ by identifying 
 $x\in X$ with the pair
$(h(x),i(x))\in \mathcal{X}^{\Delta}_1$,
and we note that  there is an increasing filtration
\begin{equation}\label{equ: filtration}
\emptyset =
\mathcal{X}^{\Delta}_0\subset X=\mathcal{X}^{\Delta}_1\subset \mathcal{X}^{\Delta}_2\subset
\cdots \subset \mathcal{X}^{\Delta}_k\subset 
\cdots \subset \bigcup_{k= 0}^{\infty}\mathcal{X}^{\Delta}_k=\mathcal{X}^{\Delta}.
\end{equation}
Since the map $h^{\Delta}:\mathcal{X}^{\Delta}\stackrel{}{\rightarrow}Y$
 is a proper map, it extends to the map
 ${h}_+^{\Delta}:\mathcal{X}^{\Delta}_+\stackrel{}{\rightarrow}Y_+$
 between the one-point compactifications,
 where $X_+$ denotes the one-point compactification of a locally compact space $X$.
\par
(iii)
A space $X\subset \R^n$ is called  {\it semi-algebraic} if it is a subspace
of the form
$X=\bigcup_{i=1}^s\bigcap_{j=1}^{r_i}\{(x_1,\cdots ,x_n)\in\R^n:f_{ij}\ *_{ij}\  0\}$,
where
$f_{ij}\in \R [X_1,\cdots ,x_n]$ and
$*_{ij}$ is either $<$ or $=$, for $i=1,\cdots s$ and
$j=1,\cdots ,r_i$.
Similarly, when $X\subset \R^n$ and $Y\subset \R^m$ are semi-algebraic spaces,
a map $f:X\to Y$ is {\it a semi-algebraic map} if
the graph of $f$ is semi-algebraic.
}
\end{dfn}

\begin{lmm}[\cite{Va}, \cite{Va2} (cf. 
Lemma 3.3 in \cite{KY7})]\label{lemma: simp}
Let $h:X\to Y$ be a surjective map such that
$h^{-1}(y)$ is a finite set for any $y\in Y,$ and let
$i:X\to \R^N$ be an embedding.
\par
$\I$
If $X$ and $Y$ are semi-algebraic spaces and the
two maps $h$, $i$ are semi-algebraic maps, then the map
${h}^{\Delta}_+:\mathcal{X}^{\Delta}_+\stackrel{\simeq}{\rightarrow}Y_+$
is a homotopy equivalence.
\par
$\II$
There is an embedding $j:X\to \R^M$ such that
the associated simplicial resolution
$(\tilde{\mathcal{X}}^{\Delta},\tilde{h}^{\Delta})$
of $(h,j)$ is non-degenerate.
\par
$\III$
If there is an embedding $j:X\to \R^M$ such that the associated simplicial resolution
$(\tilde{\mathcal{X}}^{\Delta},\tilde{h}^{\Delta})$
of $(h,j)$ is non-degenerate,
the space $\tilde{\mathcal{X}}^{\Delta}$
is uniquely determined up to homeomorphism.
Moreover,
there is a filtration preserving homotopy equivalence
$q^{\Delta}:\tilde{\mathcal{X}}^{\Delta}\stackrel{\simeq}{\rightarrow}{\mathcal{X}}^{\Delta}$ such that $q^{\Delta}\vert X=\mbox{id}_X$.
\qed
\end{lmm}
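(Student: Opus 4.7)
The plan is to prove the three parts separately, following the strategy of Vassiliev \cite{Va, Va2} and its refinement in \cite{KY7}. For part (i), I would begin by noting that each fiber $(h^{\Delta})^{-1}(y) = \sigma(i(h^{-1}(y)))$ is the convex hull of a finite set in $\R^N$, hence a contractible simplex, while $h^{\Delta}$ is proper because $h$ is proper (the one-point compactifications are needed precisely to handle properness uniformly). In the semi-algebraic category one has the Hardt triviality theorem: there exists a semi-algebraic stratification of $Y$ over which $h$, and therefore $h^{\Delta}$, is locally trivial. Since the fibers of $h^{\Delta}$ are contractible, a straightforward induction over the strata (or equivalently over the cells of a compatible semi-algebraic triangulation) shows that $h^{\Delta}$ is a weak equivalence, and after passing to one-point compactifications one obtains the homotopy equivalence $h^{\Delta}_+ : \mathcal{X}^{\Delta}_+ \to Y_+$.

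For part (ii), the goal is to produce an embedding $j : X \to \R^M$ placing every fiber $h^{-1}(y)$ in general position. Starting from any semi-algebraic embedding $i : X \to \R^N$, I would post-compose with a truncated Veronese (moment-type) map $\mu : \R^N \to \R^M$ sending $x \mapsto (x, \varphi_1(x), \ldots, \varphi_{M-N}(x))$ with $\varphi_k$ chosen so that any $k \le M$ points in the image are affinely independent. Because $h$ is semi-algebraic with finite fibers, the cardinalities $|h^{-1}(y)|$ are uniformly bounded (by the Thom--Milnor bound on the number of connected components), so choosing $M$ larger than twice this maximum suffices. The resulting simplicial resolution $(\tilde{\mathcal{X}}^{\Delta}, \tilde{h}^{\Delta})$ is then non-degenerate by construction.

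For part (iii), uniqueness up to homeomorphism follows because a non-degenerate simplicial resolution depends only on the combinatorial data $\{h^{-1}(y)\}_{y \in Y}$: every simplex appearing in $\tilde{\mathcal{X}}^{\Delta}$ is determined by its vertex set, and the topology is glued from these simplices compatibly with the variation in $y$. To construct $q^{\Delta} : \tilde{\mathcal{X}}^{\Delta} \to \mathcal{X}^{\Delta}$, I would define it fiberwise as the unique affine (i.e.\ barycentric-coordinate preserving) map from $\sigma(j(h^{-1}(y)))$ onto $\sigma(i(h^{-1}(y)))$ matching vertices; this is filtration preserving and restricts to the identity on $X = \tilde{\mathcal{X}}_1^{\Delta} = \mathcal{X}_1^{\Delta}$. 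To conclude that $q^{\Delta}$ is a homotopy equivalence, observe that $\tilde{h}^{\Delta}_+ = h^{\Delta}_+ \circ q^{\Delta}_+$; since both $h^{\Delta}_+$ and $\tilde{h}^{\Delta}_+$ are homotopy equivalences by part (i), so is $q^{\Delta}_+$, whence $q^{\Delta}$ itself is a homotopy equivalence by a standard pointed-space two-out-of-three argument applied to the cofiber sequence $\tilde{\mathcal{X}}^{\Delta} \to \tilde{\mathcal{X}}^{\Delta}_+ \to S^0$.

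The main obstacle I expect is the semi-algebraic stratification step inside part (i): while the fiberwise contractibility is transparent, global control of the homotopy equivalence at the level of compactifications requires either invoking Vassiliev's machinery essentially as a black box or carefully building a CW decomposition from a semi-algebraic triangulation compatible with $h$, and then inducting cell by cell. The other parts are largely combinatorial or formal once this technical input is secured.
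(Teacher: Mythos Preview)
The paper does not prove this lemma: it is stated with a terminal \qed and attributed to Vassiliev \cite{Va,Va2} (cf.\ \cite{KY7}), and the subsequent remark notes that only the weaker homology statement is needed, for which the reader is referred to Vassiliev's argument directly. So there is no ``paper's own proof'' to compare against; your sketch is essentially a reconstruction of the Vassiliev argument, and parts (i) and (ii) are broadly in line with how that argument goes (Hardt triviality plus contractible fibres; Veronese-type re-embedding to force general position).

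There is, however, a genuine gap in your treatment of part (iii). From the fact that $q^{\Delta}_+$ is a homotopy equivalence you cannot conclude that $q^{\Delta}$ is one via the ``cofiber sequence $\tilde{\mathcal{X}}^{\Delta}\to\tilde{\mathcal{X}}^{\Delta}_+\to S^0$''. No such cofiber sequence exists in the form you need: the inclusion of a locally compact space into its one-point compactification is not a cofibration in general, and in any case one-point compactification does not reflect homotopy equivalences (a map $f$ can fail to be a homotopy equivalence while $f_+$ is one). The correct route is to argue directly on $q^{\Delta}$: over each stratum of $Y$ on which the fibre cardinality of $h$ is constant, $q^{\Delta}$ is a bundle map whose fibre is the affine surjection from a non-degenerate $(k-1)$-simplex onto its possibly degenerate image, hence a homotopy equivalence between contractible polytopes; the same Hardt-stratification induction you invoke for (i) then gives that $q^{\Delta}$ (and each restriction $q^{\Delta}\vert\tilde{\mathcal{X}}^{\Delta}_k$) is a homotopy equivalence. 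A smaller caveat: in (ii) your appeal to a uniform Thom--Milnor bound on fibre size presupposes the semi-algebraic hypotheses of (i), which the lemma does not explicitly impose on (ii); you should either state that assumption or note that without it one must use the sequence-of-embeddings construction described immediately after the lemma.
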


\begin{rmk}\label{Remark: homotopy equ}
{\rm
In this paper we only need the weaker assertion that the map ${h}_+^{\Delta}$ is a homology equivalence. 
One can easily prove this result
by the same argument as used 
in the second  revised edition of Vassiliev's book \cite[Proof of Lemma 1 (page 90)]{Va}.
\qed}
\end{rmk}

\begin{rmk}[\cite{Va}, \cite{Va2}]\label{Remark: non-degenerate}
{\rm
Even for a  surjective map $h:X\to Y$ which is not finite to one,  
it is still possible to construct an associated non-degenerate simplicial resolution.
Recall that it is known that there exists a sequence of embeddings
$\{\tilde{i}_k:X\to \R^{N_k}\}_{k\geq 1}$ satisfying the following two conditions
for each $k\geq 1$ (\cite{Va}, \cite{Va2}).
\begin{enumerate}
\item[(i)]
For any $y\in Y$,
any $t$ points of the set $\tilde{i}_k(h^{-1}(y))$ span $(t-1)$-dimensional affine subspace
of $\R^{N_k}$ if $t\leq 2k$.
\item[(ii)]
$N_k\leq N_{k+1}$ and if we identify $\R^{N_k}$ with a subspace of
$\R^{N_{k+1}}$, 
then $\tilde{i}_{k+1}=\hat{i}\circ \tilde{i}_k$,
where
$\hat{i}:\R^{N_k}\stackrel{\subset}{\rightarrow} \R^{N_{k+1}}$
denotes the inclusion.
\end{enumerate}
In this situation, in fact,
a non-degenerate simplicial resolution may be
constructed by choosing a sequence of embeddings
$\{\tilde{i}_k:X\to \R^{N_k}\}_{k\geq 1}$ satisfying the above two conditions
for each $k\geq 1$.

Let
$\dis\mathcal{X}^{\Delta}_k=\big\{(y,u)\in Y\times \R^{N_k}:
u\in\sigma (\textbf{\textit{v}}),
\textbf{\textit{v}}
=\{v_1,\cdots ,v_l\}\subset \tilde{i}_k(h^{-1}(y)),l\leq k\big\}.$
Then
by identifying naturally  ${\cal X}^{\Delta}_k$ with a subspace
of ${\cal X}_{k+1}^{\Delta}$,  define the non-degenerate simplicial
resolution ${\cal X}^{\Delta}$ of  $h$ as 
$\dis {\cal X}^{\Delta}=\bigcup_{k\geq 1} {\cal X}^{\Delta}_k$.
}
\qed
\end{rmk}

\begin{dfn}\label{def: 2.3}
{\rm
Let $h:X\to Y$ be a surjective semi-algebraic map between semi-algebraic spaces, 
$j:X\to \R^N$ be a semi-algebraic embedding, and let
$(\mathcal{X}^{\Delta},h^{\Delta}:\mathcal{X}^{\Delta}\to Y)$
denote the associated non-degenerate  simplicial resolution of $(h,j)$. 
\par
Let $k$ be a fixed positive integer and let
$h_k:\mathcal{X}^{\Delta}_k\to Y$ be the map
defined by the restriction
$h_k:=h^{\Delta}\vert \mathcal{X}^{\Delta}_k$.
The fibers of the map $h_k$ are $(k-1)$-skeleton of the fibers of $h^{\Delta}$ and, in general,  always
fail to be simplices over the subspace
$Y_k=\{y\in Y:\mbox{card}(h^{-1}(y))>k\}.$
Let $Y(k)$ denote the closure of the subspace $Y_k$.
We modify the subspace $\mathcal{X}^{\Delta}_k$ so as to make  all
the fibers of $h_k$ contractible by adding to each fibre of $Y(k)$ a cone whose base
is this fibre.
We denote by $X^{\Delta}(k)$ this resulting space and by
$h^{\Delta}_k:X^{\Delta}(k)\to Y$ the natural extension of $h_k$.
\par
Following  \cite{Mo3}, we call the map $h^{\Delta}_k:X^{\Delta}(k)\to Y$
{\it the truncated $($after the $k$-th term$)$  simplicial resolution} of $Y$.
Note that 
that there is a natural filtration
$$
 X^{\Delta}_0\subset
 X^{\Delta}_1\subset
\cdots 
\subset X^{\Delta}_l\subset X^{\Delta}_{l+1}\subset \cdots
\subset  X^{\Delta}_k\subset X^{\Delta}_{k+1}
=X^{\Delta}_{k+2}
=\cdots =X^{\Delta}(k),
$$
where $X^{\Delta}_0=\emptyset$,
$X^{\Delta}_l=\mathcal{X}^{\Delta}_l$ if $l\leq k$ and
$X^{\Delta}_l=X^{\Delta}(k)$ if $l>k$.
}
\end{dfn}
\begin{lmm}[\cite{Mo3}, cf. Remark 2.4 and Lemma 2.5 in \cite{KY4}]
\label{Lemma: truncated}
Under the same assumptions and with the same notation as in Definition \ref{def: 2.3}, the map
$h^{\Delta}_k:X^{\Delta}(k)\stackrel{\simeq}{\longrightarrow} Y$ 
is a homotopy equivalence.
\qed
\end{lmm}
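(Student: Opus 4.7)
The plan is to verify that $h^{\Delta}_k$ is a proper semi-algebraic surjection all of whose fibres are contractible, and then to deduce the statement by the same Vietoris--Begle principle that underlies Lemma \ref{lemma: simp}(i). Since $X^{\Delta}(k)$ and $Y$ are semi-algebraic they are triangulable, so a weak homotopy equivalence with them as source and target is automatically a genuine homotopy equivalence.

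First I would carry out the fibrewise analysis. For $y\in Y\setminus Y(k)$ one has $\mbox{card}(h^{-1}(y))\leq k$, so no cone is attached above $y$; the fibre $(h^{\Delta}_k)^{-1}(y)$ then equals the full simplex $\sigma(j(h^{-1}(y)))$, a simplex of dimension at most $k-1$, hence contractible (its $(k-1)$-skeleton is the whole simplex). For $y\in Y(k)$, Definition \ref{def: 2.3} attaches a cone with base $h_k^{-1}(y)$ to the existing fibre of $h_k$ above $y$, so $(h^{\Delta}_k)^{-1}(y)$ is precisely that cone, again contractible. Properness of $h^{\Delta}_k$ follows from properness of $h$ together with the compactness of each attached cone.

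To convert contractible-fibred properness into the desired homotopy equivalence I would compare $h^{\Delta}_k$ with the non-truncated resolution $h^{\Delta}:\mathcal{X}^{\Delta}\to Y$ of Lemma \ref{lemma: simp}(i). Define a semi-algebraic map $\Phi:\mathcal{X}^{\Delta}\to X^{\Delta}(k)$ over $Y$ by taking the identity on $\mathcal{X}^{\Delta}_k$, and, above each $y\in Y(k)$, the affine retraction of the convex hull $\sigma(j(h^{-1}(y)))$ onto the attached cone which fixes the $(k-1)$-skeleton and sends the barycenter of the simplex to the apex of the cone. Then $h^{\Delta}_k\circ\Phi=h^{\Delta}$, and $\Phi$ is a fibrewise map between contractible sets, so combining with Lemma \ref{lemma: simp}(i) (or with its one-point compactified version, applied to $h^{\Delta}_+$ and $(h^{\Delta}_k)_+$) forces $h^{\Delta}_k$ to be a homotopy equivalence as well.

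The main obstacle will be establishing the continuity and semi-algebraicity of the cone attachment and of $\Phi$ along the boundary of $Y(k)$, where the combinatorial type of the fibre of $h$ jumps. I would handle this exactly as in Vassiliev \cite{Va} and Mostovoy \cite{Mo3}: fix a sequence of non-degenerate semi-algebraic embeddings into Euclidean spaces of increasing dimension (cf.\ Remark \ref{Remark: non-degenerate}), and carry out the cone attachment globally inside a single ambient affine space, so that all fibrewise simplices and cones assemble automatically into one semi-algebraic subspace of $Y\times\R^M$. Once this global description is in hand, the fibrewise contractibility above yields the conclusion by triangulability.
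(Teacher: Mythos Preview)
The paper does not supply a proof of this lemma at all: it is stated with citations to \cite{Mo3} and \cite{KY4} and closed immediately with \qed. Your proposal is essentially a reconstruction of Mostovoy's original argument --- the fibres of $h^{\Delta}_k$ are contractible by design (full simplices over $Y\setminus Y(k)$, cones over $Y(k)$), the map is proper and semi-algebraic, and one then invokes the same Vietoris--Begle/triangulability machinery behind Lemma \ref{lemma: simp}(i). That is the correct approach and matches the cited literature.

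One point deserves correction, though it does not affect the outcome. Your auxiliary map $\Phi:\mathcal{X}^{\Delta}\to X^{\Delta}(k)$ is not well defined as written: the cone attached over a point $y\in Y(k)$ is glued on \emph{externally} to $\mathcal{X}^{\Delta}_k$, not carved out as a subset of the convex hull $\sigma(j(h^{-1}(y)))$, so there is no ``affine retraction of the convex hull onto the attached cone'' in any literal sense. Fortunately this comparison map is unnecessary. The direct argument --- $h^{\Delta}_k$ is proper, semi-algebraic, surjective, with every fibre contractible --- already yields that $(h^{\Delta}_k)_+$ is a homotopy equivalence by the same reasoning as in Lemma \ref{lemma: simp}(i) (cf.\ Remark \ref{Remark: homotopy equ}), and triangulability upgrades this to a genuine homotopy equivalence. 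You should simply drop the paragraph about $\Phi$ and state the conclusion directly from the fibrewise analysis.
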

\section{The Vassiliev spectral sequence}
\label{section: spectral sequence}

In this section, we always assume that
$D=(d_1,\cdots ,d_r)\in \N^r$ is an $r$-tuple of positive integers
and let us write $d_{min}=\min \{d_1,\cdots ,d_r\}$.
First, we construct the Vassiliev spectral sequence.

\begin{dfn}\label{Def: 4.1}
{\rm
(i)
For a space $X$ and a positive integer $k\geq 1$, let
$F(X,k)$ denote {\it the ordered configuration space} of $k$-distinct points in $X$
given by
\begin{equation}
F(X,k)=\{(x_1,\cdots ,x_k)\in X^k:x_i\not= x_j
\mbox{ if }i\not= j\}.
\end{equation}
The symmetric group $S_k$ of $k$ letters acts on $F(X,k)$ freely by the usual coordinate
permutation and we denote by $C_k(X)$
{\it the unordered configuration space}  of $k$-distinct points  in $X$ given by the orbit space
\begin{equation}
C_k(X)=F(X,k)/S_k.
\end{equation}
\par
(ii)
Let $\Sigma_D$ denote {\it the discriminant} of $\Pol_D^*(S^1,\XS)$ in $\P^D$ 
given by the complement
\begin{align*}
\Sigma_D&=
\P^D \setminus \Pol_D^*(S^1,\XS)
\\
&=
\{(f_1(z),\cdots ,f_{r}(z))\in \P^D :
(f_1(x),\cdots ,f_{r}(x))\in L(\Sigma)
\mbox{ for some }x\in \R\},
\end{align*}
where 
\begin{equation}
L(\Sigma)
=\bigcup_{\sigma\in I(\mathcal{K}_{\Sigma})}L_{\sigma}
=
~\bigcup_{\sigma\subset[r],\sigma\notin K_{\Sigma}}
L_{\sigma}.
\end{equation}
\par
(iii)
Let  $Z_D\subset \Sigma_D\times \R$
denote 
{\it the tautological normalization} of 
 $\Sigma_D$ consisting of 
 all pairs 
$((f_1(z),\ldots ,f_{r}(z)),
x)\in \Sigma_D\times\R$
satisfying the condition
$(f_1(x),\cdots ,f_{r}(x))\in L(\Sigma)$.
Projection on the first factor  gives a surjective map
$\pi_D :Z_D\to \Sigma_D$.
}
\end{dfn}
\begin{rmk}
{\rm
Let $\sigma_k\in [r]$ for $k=1,2$.
It is easy to see that
$L_{\sigma_1}\subset L_{\sigma_2}$ if
$\sigma_1\supset \sigma_2$.
Letting
\begin{equation*}
Pr(\Sigma)=\{\sigma =\{i_1,\cdots ,i_s\} \subset [r]:
\{\textit{\textbf{n}}_{i_1},\cdots ,\textit{\textbf{n}}_{i_s}\}
\mbox{ is a primitive in $\Sigma$}\},
\end{equation*}
we see that
\begin{equation}
L(\Sigma)=\bigcup_{\sigma\in Pr(\Sigma)}L_{\sigma}
\end{equation}
and by using (\ref{eq: rmin}) we obtain the equality
\begin{equation}\label{eq: dim rmin}
\dim L(\Sigma)=2(r-\rmin (\Sigma)).
\end{equation}
}
\end{rmk}
Our goal in this section is to construct, by means of the
{\it non-degenerate} simplicial resolution  of the discriminant, a spectral sequence converging to the homology of
$\Pol_D^*(S^1,\XS)$.

\begin{dfn}\label{non-degenerate simp.}
{\rm
Let 
$(\mathcal{X}^D,{\pi}^{\Delta}_D:\mathcal{X}^D\to\Sigma_D)$ 
be the non-degenerate simplicial resolution associated to the surjective map
$\pi_D:Z_D\to \Sigma_D$ 
with the natural increasing filtration as in Definition \ref{def: def},
$$
\emptyset =
\SZ_0
\subset \SZ_1\subset 
\SZ_2\subset \cdots
\subset 
\SZ=\bigcup_{k= 0}^{\infty}\SZ_k.
\qquad \qed
$$
}
\end{dfn}
By Lemma \ref{lemma: simp},
the map
$\pi_D^{\Delta}:
\SZ\stackrel{\simeq}{\rightarrow}\Sigma_D$
is a homotopy equivalence which
extends to  a homotopy equivalence
$\pi_{D+}^{\Delta}:\SZ_+\stackrel{\simeq}{\rightarrow}{\Sigma_{D+}},$
where $X_+$ denotes the one-point compactification of a
locally compact space $X$.
Since
${\mathcal{X}_k^{D}}_+/{\SZ_{k-1}}_+
\cong (\SZ_k\setminus \SZ_{k-1})_+$,
we have a spectral sequence 
$$
\big\{E_{t;D}^{k,s},
d_t:E_{t;D}^{k,s}\to E_{t;D}^{k+t,s+1-t}
\big\}
\Rightarrow
H^{k+s}_c(\Sigma_D;\Z),
$$
where
$E_{1;D}^{k,s}=H^{k+s}_c(\SZ_k\setminus\SZ_{k-1};\Z)$ and
$H_c^k(X;\Z)$ denotes the cohomology group with compact supports given by 
$
H_c^k(X,\Z)= \tilde{H}^k(X_+;\Z).
$
\par\vspace{2mm}\par
Let $N(D)$ denote the positive integer given by
\begin{equation}\label{eq: ND}
N(D)=\sum_{k=1}^r d_k.
\end{equation}
Since there is a homeomorphism
$\P^D\cong \C^{N(D)}$,
by Alexander duality  there is a natural
isomorphism

\begin{equation}\label{Al}
\tilde{H}_k(\Pol_D^*(S^1,\XS);\Z)\cong
H_c^{2N(D)-k-1}(\Sigma_D;\Z)
\quad
\mbox{for any }k.
\end{equation}
By
reindexing we obtain a
spectral sequence

\begin{eqnarray}\label{SS}
&&\big\{E^{t;D}_{k,s}, \tilde{d}^{t}:E^{t;D}_{k,s}\to E^{t;D}_{k+t,s+t-1}
\big\}
\Rightarrow H_{s-k}(\Pol_D^*(S^1,\XS);\Z),
\end{eqnarray}
where
$E^{1;D}_{k,s}=
H^{2N(D)+k-s-1}_c(\SZ_k\setminus\SZ_{k-1};\Z).$
\par\vspace{2mm}\par
Let
 $L_{k;\Sigma}\subset (\R\times L(\Sigma))^k$ denote the subspace
defined by
\begin{equation}\label{l}
L_{k;\Sigma}=\{((x_1,s_1),\cdots ,(x_k,s_k)): 
x_j\in \R,s_j\in L(\Sigma),
x_i\not= x_j\mbox{ if }i\not= j\}.
\end{equation}
The symmetric group $S_k$ on $k$ letters  acts on $L_{k;\Sigma}$ by permuting coordinates. Let
$C_{k;\Sigma}$ denote the orbit space
\begin{equation}\label{Ck}
C_{k;\Sigma}=L_{k;\Sigma}/S_k.
\end{equation}
By (\ref{eq: dim rmin}) we know that
$C_{k;\Sigma}$ 
is a cell-complex of  dimension
\begin{equation}\label{eq: dimension C}
\dim C_{k;\Sigma}=(1+2r-2r_{\rm min}(\Sigma))k.
\end{equation}
\begin{lmm}\label{lemma: vector bundle*}
If  
$1\leq k\leq d_{\rm min}=\min\{d_1,\cdots ,d_r\}$,
$\SZ_k\setminus\SZ_{k-1}$
is homeomorphic to the total space of a real affine
bundle $\xi_{D,k}$ over $C_{k;\Sigma}$ with rank 
$l_{D,k}=2N(D)-2rk+k-1$.
\end{lmm}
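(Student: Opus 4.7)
The plan is to exhibit an explicit projection $p : \SZ_k \setminus \SZ_{k-1} \to C_{k;\Sigma}$ and identify its fibers as affine spaces of the claimed dimension $l_{D,k}$.

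First I would unpack what a point of $\SZ_k \setminus \SZ_{k-1}$ looks like. By Definition \ref{non-degenerate simp.} (combined with Remark \ref{Remark: non-degenerate}), such a point is a pair $(f, u)$, where $f = (f_1, \ldots, f_r) \in \Sigma_D$ and $u$ lies in the interior of the (non-degenerate) $(k-1)$-simplex spanned by the images of exactly $k$ distinct points of $\pi_D^{-1}(f)$ under the chosen embedding. Since
\[
\pi_D^{-1}(f) \;\cong\; R(f) := \{x \in \R : (f_1(x), \ldots, f_r(x)) \in L(\Sigma)\},
\]
these $k$ points correspond to an unordered set $\{x_1, \ldots, x_k\} \subset R(f)$; setting $s_j := (f_1(x_j), \ldots, f_r(x_j)) \in L(\Sigma)$ yields $\{(x_j, s_j)\}_{j=1}^k \in C_{k;\Sigma}$. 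This gives the continuous map
\[
p : \SZ_k \setminus \SZ_{k-1} \longrightarrow C_{k;\Sigma}, \qquad (f, u) \longmapsto \{(x_j, s_j)\}_{j=1}^k.
\]

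Next I would compute the fiber of $p$ over a configuration $\{(x_j, s_j)\}_{j=1}^k$. It splits as a product of two pieces: (a) the set of tuples $(f_1, \ldots, f_r) \in \P^D$ satisfying the interpolation conditions $f_i(x_j) = s_{j,i}$ for all $i, j$, and (b) the interior of the corresponding $(k-1)$-simplex. For piece (a) I would invoke the hypothesis $1 \le k \le \dmin$: since $x_1, \ldots, x_k$ are distinct real numbers and $k \le d_i$ for every $i$, the complex-affine evaluation map $\P^{d_i}(\C) \to \C^k$, $f_i \mapsto (f_i(x_1), \ldots, f_i(x_k))$, is surjective with fiber a complex affine subspace of dimension $d_i - k$ (standard Vandermonde/Lagrange interpolation, the monic leading term producing the affine shift $s_{j,i} - x_j^{d_i}$). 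Summing over $i$, piece (a) is a complex affine space of complex dimension $N(D) - rk$, i.e., real dimension $2N(D) - 2rk$. Together with the $(k-1)$-dimensional open simplex factor from (b), the fiber has total real dimension $2N(D) - 2rk + (k-1) = l_{D,k}$, as required.

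Finally, to promote this fiberwise description to a real affine bundle, I would construct local trivializations over $C_{k;\Sigma}$. Locally, any section of the canonical cover $L_{k;\Sigma} \to C_{k;\Sigma}$ provides a continuous ordering of the $k$ points; the resulting Lagrange interpolation basis (with homogeneous part spanned by $\{z^l \prod_{j=1}^k (z - x_j)\}_{l=0}^{d_i-k-1}$) gives a continuously varying affine trivialization of piece (a), while the barycentric coordinates of the ordered vertices trivialize the open simplex factor in (b). The transition functions between such local trivializations are affine, so $p$ realizes $\SZ_k \setminus \SZ_{k-1}$ as the total space of a real affine bundle $\xi_{D,k}$ over $C_{k;\Sigma}$ of rank $l_{D,k}$.

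The main obstacle I expect is verifying that the locally defined trivializations patch compatibly under the symmetric group action identifying the ordered cover $L_{k;\Sigma}$ with the unordered quotient $C_{k;\Sigma}$: one must check that relabelling the $k$ points modifies both the Lagrange basis and the barycentric coordinates by an affine (not merely continuous) transformation. The two essential inputs are the non-degeneracy of the simplicial resolution (which ensures that any $k$ chosen points span a genuine $(k-1)$-simplex, so the simplex factor has the right topology) and the bound $k \le \dmin$ (which ensures the Vandermonde matrix has full row rank, so that the interpolation space has exactly the expected complex dimension $d_i - k$ fiberwise).
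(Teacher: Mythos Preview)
Your proposal is correct and follows essentially the same route as the paper: define the projection $\SZ_k\setminus\SZ_{k-1}\to C_{k;\Sigma}$ by $(f,u)\mapsto\{(x_j,f(x_j))\}$, compute the fiber via the Vandermonde/interpolation argument (using $k\le d_{\min}$) as a complex affine space of real dimension $2N(D)-2rk$ times an open $(k-1)$-simplex, and conclude local triviality. Your discussion of local trivializations (Lagrange basis, barycentric coordinates, and the affine nature of the $S_k$-transition functions) is actually more explicit than the paper's, which simply asserts that it is ``easy to show'' that $\pi_{k,D}$ is a locally trivial real affine bundle.
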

\begin{proof}
The argument is exactly analogous to the one in the proof of  
\cite[Lemma 4.4]{AKY1}. 
Namely, an element of $\SZ_k\setminus\SZ_{k-1}$ is represented by 
$(F,u)=((f_1,\cdots ,f_{r}),u)$, where 
$F=(f_1,\cdots ,f_{r})$ is an 
$r$-tuple of monic polynomials in $\Sigma_D$ and $u$ is an element of the interior of
the span of the images of $k$ distinct points 
$\{x_1,\cdots, x_k\}\in C_k(\R)$ 
such that
$F(x_j)=(f_1(x_j),\cdots ,f_{r}(x_j))\in L(\Sigma)$ for each $1\leq j\leq k$, 
under a suitable embedding.
\ 
Since the $k$ distinct points $\{x_j\}_{j=1}^k$ 
are uniquely determined by $u$, by the definition of the non-degenerate simplicial resolution, 
 there are projection maps
$\pi_{k,D} :\mathcal{X}^{D}_k\setminus
\mathcal{X}^{D}_{k-1}\to C_{k;\Sigma}$
given by
$((f_1,\cdots ,f_{r}),u) \mapsto 
\{(x_1,F(x_1)),\dots, (x_k,F(x_k))\}$. 

\par
Now suppose that $1\leq k\leq d_{\rm min}$ and
let $c=\{(x_j,s_j)\}_{j=1}^k\in C_{k;\Sigma}$
$(x_j\in \R$, $s_j\in L(\Sigma))$ be any fixed element.
Consider the fibre  $\pi_{k,D}^{-1}(c)$.
For each $1\leq j\leq k$,
we set $s_j=(s_{1,j},\cdots ,s_{r,j})$ and
consider the condition  
\begin{equation}\label{equ: pik}
F(x_j)=(f_1(x_j),\cdots ,f_{r}(x_j))=s_j
\Leftrightarrow
f_t(x_j)=s_{t,j}
\quad
\mbox{for }1\leq t\leq r.
\end{equation}
In general, 
the condition $f_t(x_j)=s_{t,j}$ gives
one  linear condition on the coefficients of $f_t$,
and determines an affine hyperplane in $\P^{d_t}(\C)$. 
For example, if we set $f_t(z)=z^{d_t}+\sum_{i=0}^{d_t-1}a_{i,t}z^{i}$,
then
$f_t(x_j)=s_{t,j}$ for any $1\leq j\leq k$
if and only if
\begin{equation}\label{equ: matrix equation}
\begin{bmatrix}
1 & x_1 & x_1^2 & \cdots & x_1^{d_t-1}
\\
1 & x_2 & x_2^2 & \cdots & x_2^{d_t-1}
\\
\vdots & \ddots & \ddots & \ddots & \vdots
\\
1 & x_k & x_k^2 & \cdots & x_k^{d_t-1}
\end{bmatrix}
\cdot
\begin{bmatrix}
a_{0,t}\\ a_{1,t} \\ \vdots 
\\ a_{d_t-1,t}
\end{bmatrix}
=
\begin{bmatrix}
s_{t,1}-x_1^{d_t}\\ s_{t,2}-x_2^{d_t} \\ \vdots 
\\ s_{t,k}-x_k^{d_t}
\end{bmatrix}
\end{equation}
Since $1\leq k\leq d_{\rm min}(D)$ and
 $\{x_j\}_{j=1}^k\in C_k(\R)$,  
it follows from the properties of Vandermonde matrices that the condition (\ref{equ: matrix equation}) 
gives exactly $k$ independent conditions on the coefficients of $f_t(z)$.
Thus  the space of polynomials $f_t(z)$ in $\P^{d_t}(\C)$ which satisfies
(\ref{equ: matrix equation})
is the intersection of $k$ affine hyperplanes in general position
and has codimension $k$ in $\P^{d_t}(\C)$.
Hence,
the fibre $\pi_{k,D}^{-1}(c)$ is homeomorphic  to the product of an open $(k-1)$-simplex
 with the real affine space of dimension
 $2\sum_{i=1}^r(d_i-k)=2N(D)-2rk$.
It is now easy to show that  $\pi_{k,D}$ is a (locally trivial) real affine bundle over $C_{k;\Sigma}$ of rank $l_{D,k}
=2N(D)-2rk+k-1$.
\end{proof}
\begin{lmm}\label{lemma: E11}
If $1\leq k\leq  d_{\rm min}$, there is a natural isomorphism
$$
E^{1;D}_{k,s}\cong
H^{2rk-s}_c(C_{k;\Sigma};\pm \Z),
$$
where 
the twisted coefficients system $\pm \Z$  comes from
the Thom isomorphism.
\end{lmm}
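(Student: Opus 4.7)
The plan is to combine Lemma \ref{lemma: vector bundle*} with the Thom isomorphism in cohomology with compact supports. Writing $l_{D,k} = 2N(D) - 2rk + k - 1$ for the rank of the real affine bundle $\xi_{D,k}$ over $C_{k;\Sigma}$ provided by that lemma, and identifying $\SZ_k \setminus \SZ_{k-1}$ with its total space, the Thom isomorphism yields
$$H^{j}_c(\SZ_k \setminus \SZ_{k-1}; \Z) \;\cong\; H^{j - l_{D,k}}_c(C_{k;\Sigma}; \pm \Z),$$
where $\pm \Z$ stands for the local coefficient system associated to the orientation line bundle of $\xi_{D,k}$. Substituting $j = 2N(D) + k - s - 1$ into the definition $E^{1;D}_{k,s} = H^{j}_c(\SZ_k \setminus \SZ_{k-1}; \Z)$, a direct calculation gives
$$j - l_{D,k} = (2N(D) + k - s - 1) - (2N(D) - 2rk + k - 1) = 2rk - s,$$
producing the asserted isomorphism $E^{1;D}_{k,s} \cong H^{2rk-s}_c(C_{k;\Sigma}; \pm \Z)$.

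Next I would verify that the twist $\pm \Z$ is indeed the sign-representation twist. Revisiting the fiber description from the proof of Lemma \ref{lemma: vector bundle*}, over an ordered configuration $((x_1, s_1), \dots, (x_k, s_k)) \in L_{k;\Sigma}$ the fiber of $\xi_{D,k}$ is the product of the interior of a $(k-1)$-simplex (from the non-degenerate simplicial resolution) with the complex-affine solution set of the Vandermonde equations $F(x_j) = s_j$, which inherits a canonical complex (hence real) orientation. The $S_k$-action by permutation of the indices $j$ preserves this complex orientation on the affine factor (it merely permutes the defining linear conditions, leaving the solution subspace of $\P^D$ and its complex structure intact), while it acts on the vertices of the $(k-1)$-simplex by the sign character $\mathrm{sgn}\colon S_k \to \{\pm 1\}$. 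Therefore the orientation line bundle of $\xi_{D,k}$ descends, via the free $S_k$-cover $L_{k;\Sigma} \to C_{k;\Sigma}$, to the flat line bundle corresponding to $\mathrm{sgn}$, which is exactly what $\pm \Z$ denotes.

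The main obstacle is the careful justification of the Thom isomorphism in this setting: one must check that $\xi_{D,k}$ is locally trivial in a way compatible with the free $S_k$-action on $L_{k;\Sigma}$, so that the Thom class can be constructed $S_k$-equivariantly and descended to $C_{k;\Sigma}$ with the sign twist. The hypothesis $k \leq d_{\min}$ is crucial here, since it is precisely this condition that forces the Vandermonde-type matrix in \eqref{equ: matrix equation} to have maximal rank $k$ for every component $f_t$, guaranteeing that the fiber dimension, and hence the bundle structure, does not jump. Once the local triviality and the orientation analysis above are settled, the lemma follows from the degree arithmetic displayed in the first paragraph.
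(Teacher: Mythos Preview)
Your proposal is correct and follows essentially the same route as the paper: invoke Lemma~\ref{lemma: vector bundle*} to identify $\SZ_k\setminus\SZ_{k-1}$ with the total space of the affine bundle $\xi_{D,k}$ (so that its one-point compactification is the Thom space), apply the Thom isomorphism in compactly supported cohomology, and do the arithmetic $(2N(D)+k-s-1)-l_{D,k}=2rk-s$. The paper's proof is only three lines and simply declares that the coefficient system $\pm\Z$ is the one arising from the Thom isomorphism; your additional analysis of the $S_k$-action on the simplex and on the complex-affine factor, showing that the orientation twist is precisely the sign character, is a welcome elaboration that the paper omits but does not contradict.
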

\begin{proof}
Suppose that $1\leq k\leq d_{\rm min}$.
By Lemma \ref{lemma: vector bundle*}, there is a
homeomorphism
$
(\SZ_k\setminus\SZ_{k-1})_+\cong T(\xi_{D,k}),
$
where $T(\xi_{D,k})$ denotes the Thom space of
$\xi_{D,k}$.
Since $(2N(D)+k-s-1)-l_{D,k}
=
2rk-s,$
by using the Thom isomorphism 
there is a natural isomorphism 
$
E^{1;D}_{k,s}
\cong
H^{2rk-s}_c(C_{k;\Sigma};\pm \Z).
$
\end{proof}
\begin{dfn}
{\rm
(i)
Let $X^{\Delta}$ denote the truncated 
(after the $d_{\rm min}$-th term) simplicial resolution of $\Sigma_D$
with the natural filtration
$$
\emptyset =X^{\Delta}_0\subset
X^{\Delta}_1\subset \cdots\subset
X^{\Delta}_{d_{\rm min}}\subset X^{\Delta}_{d_{\rm min}+1}=
X^{\Delta}_{d_{\rm min}+2}=\cdots =X^{\Delta},
$$
where $X^{\Delta}_k=\SZ_k$ if $k\leq d_{\rm min}$ and 
$X^{\Delta}_k=X^{\Delta}_{d_{\rm min}+1}$ if $k\geq d_{\rm min}+1$.
\par\vspace{1mm}\par
(ii)
Let $\{\textbf{\textit{e}}_i:1\leq i\leq r\}$ denote the standard $\R$-basis
of $\R^r$ given by
\begin{equation}\label{eq: standard basis}
\textit{\textbf{e}}_1=(1,0,0,\cdots 0),
\textit{\textbf{e}}_2=(0,1,0,\cdots 0),\cdots ,
\textit{\textbf{e}}_r=(0,\cdots ,0, 0,1)\in \R^r.
\end{equation}
For each $1\leq i\leq r$,
let us consider
\begin{equation}\label{eq: Di}
D+\textit{\textbf{e}}_i=(d_1,\cdots ,d_{i-1},d_i+1,d_{i+1},\cdots ,d_r)
\end{equation}
and
let $Y^{\Delta}$ denote the truncated (after the $d_{\rm min}$-th term) simplicial resolution of 
$\Sigma_{D+\textit{\textbf{e}}_i}$
with the natural filtration
$$
\emptyset =Y^{\Delta}_0\subset
Y^{\Delta}_1\subset \cdots\subset
Y^{\Delta}_{d_{\rm min}}\subset Y^{\Delta}_{d_{\rm min}+1}
=Y^{\Delta}_{d_{\rm min}+2}=\cdots =Y^{\Delta},
$$
where $Y^{\Delta}_k=\mathcal{X}_k^{D}$ if $k\leq d_{\rm min}$ and 
$Y^{\Delta}_k=Y^{\Delta}$ if $k\geq d_{\rm min}+1$.
\qed
}
\end{dfn}
Then by using the same argument as in 
(\ref{SS}) and Lemma \ref{Lemma: truncated}, we have the following 
two truncated spectral sequences
\begin{eqnarray}\label{TSS}
&&
\big\{\ E^{t}_{k,s}, d^{t}:E^{t}_{k,s}\to E^{t}_{k+t,s+t-1}
\big\}
\qquad \Rightarrow H_{s-k}(\Pol_D^*(S^1.\XS);\Z)
\\
\nonumber
&&\big\{\ ^{\prime}E^{t}_{k,s},\  ^{\prime}d^{t}:\ ^{\prime}E^{t}_{k,s}\to
\  ^{\prime}E^{t}_{k+t,s+t-1}
\big\}
\Rightarrow H_{s-k}(\Pol_{D+\textit{\textbf{e}}_i}^*(S^1,\XS);\Z),
\end{eqnarray}
where
$$
E^{1}_{k,s}=
H^{2N(D)+k-s-1}_c(X^{\Delta}_k\setminus X^{\Delta}_{k-1};\Z),
\ 
E^{1}_{k,s}=
H^{2N(D)+k-s+1}_c(Y^{\Delta}_k\setminus Y^{\Delta}_{k-1};\Z).
$$ 
\par\vspace{2mm}\par
\begin{lmm}\label{lmm: E1=0}
\begin{enumerate}
\item[$\I$]
If $k<0$ or $k\geq \dmin +2$,
$E^1_{k,s}=\ ^{\p}E^1_{k,s}=0$ for any $s$.
\item[$\II$]
$E^1_{0,0}=\ ^{\p}E^1_{0,0}=\Z$ and $E^1_{0,s}=\ ^{\p}E^1_{0,s}=0$ if $s\not= 0$.
\item[$\III$]
If $1\leq k\leq d_{\rm min}$, there are  
isomorphisms
$$
E^1_{k,s}\cong \ ^{\p}E^1_{k,s}\cong H^{2rk-s}_c(C_{k;\Sigma};\pm \Z).
$$
\item[$\IV$]
If $1\leq k\leq d_{min}$,
$E^1_{k,s}=\ ^{\prime}E^1_{k,s}=0$
for any $s\leq (2r_{min}(\Sigma )-1)k-1$.
\item[$\V$]
$E^1_{d_{\rm min}+1,s}=\ ^{\p}E^1_{d_{\rm min}+1,s}=0$ for any 
$s\leq (2\rmin (\Sigma)-1)d_{min}-1$.
\end{enumerate}
\end{lmm}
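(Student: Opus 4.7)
My plan is to treat the five assertions in three groups, from routine to substantive.

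Parts (i) and (ii) are essentially formal. For $k < 0$ the convention $X^{\Delta}_0 = \emptyset$ gives $X^{\Delta}_k = \emptyset$, while for $k \geq \dmin + 2$ the truncation rule $X^{\Delta}_k = X^{\Delta}_{\dmin+1}$ forces $X^{\Delta}_k \setminus X^{\Delta}_{k-1} = \emptyset$; in either case $E^1_{k,s}$ vanishes, and the same argument applies verbatim to $^{\prime}E^1_{k,s}$. Part (ii) reflects the basepoint contribution from the point at infinity of the one-point compactification $\Sigma_{D+}$: after the Alexander-duality reindexing this provides the single generator $E^1_{0,0} = \Z$, with $E^1_{0,s} = 0$ for $s \neq 0$.

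Part (iii) is immediate from Lemma \ref{lemma: E11}: in the range $1 \leq k \leq \dmin$ the truncated and non-truncated filtrations agree, so $X^{\Delta}_k \setminus X^{\Delta}_{k-1} = \SZ_k \setminus \SZ_{k-1}$, and the affine-bundle description of that stratum over $C_{k;\Sigma}$ combined with the Thom isomorphism yields $E^1_{k,s} \cong H^{2rk-s}_c(C_{k;\Sigma};\pm\Z)$. The same reasoning applied to $D + \e_i$, together with $\dmin(D+\e_i) \geq \dmin(D)$ and the fact that $C_{k;\Sigma}$ is independent of $D$, gives the identical formula for $^{\prime}E^1_{k,s}$. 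Part (iv) then follows by dimension counting: using $\dim C_{k;\Sigma} = (1 + 2r - 2\rmin(\Sigma))k$ from (\ref{eq: dimension C}) and the fact that $H^j_c$ of a cell complex vanishes above its dimension, we obtain $E^1_{k,s} = 0$ whenever $2rk - s > (1 + 2r - 2\rmin(\Sigma))k$, i.e.\ whenever $s \leq (2\rmin(\Sigma)-1)k - 1$.

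The main obstacle is part (v). By the construction of the truncated simplicial resolution, the stratum $X^{\Delta}_{\dmin+1} \setminus X^{\Delta}_{\dmin}$ is the union over $y \in Y(\dmin)$ of the open cone (apex included, base excluded) on the fibre of $h_{\dmin}$ at $y$, so away from the cone apices it fibres over $Y(\dmin)$ with open-cone fibre of dimension $\dmin$. My plan is to bound its dimension via a variant of the Vandermonde argument of Lemma \ref{lemma: vector bundle*} applied to $(\dmin+1)$-point configurations. The essential new difficulty is that for each coordinate $t$ with $d_t = \dmin$ the $(\dmin+1)\times\dmin$ Vandermonde matrix appearing in (\ref{equ: matrix equation}) has maximal rank $\dmin$, so the system $f_t(x_j) = s_{t,j}$ for $j=1,\ldots,\dmin+1$ becomes overdetermined and imposes a single linear compatibility among the $\{s_{t,j}\}_j$. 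This compatibility shrinks the configuration parameter space (rather than enlarging the polynomial fibre), and bookkeeping over all such $t$ yields the required upper bound on $\dim(X^{\Delta}_{\dmin+1} \setminus X^{\Delta}_{\dmin})$. Comparing with the cohomological degree $2N(D) + \dmin - s$ gives vanishing of $E^1_{\dmin+1,s}$ for $s \leq (2\rmin(\Sigma)-1)\dmin - 1$, and the same argument applied with $D + \e_i$ handles $^{\prime}E^1_{\dmin+1,s}$. The delicate dimension bookkeeping in the overdetermined Vandermonde setting is what I expect to be the technical heart of the proof.
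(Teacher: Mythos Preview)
Your treatment of parts (i)--(iv) matches the paper's proof essentially line for line: the truncation convention handles (i), the basepoint of the one-point compactification gives (ii), Lemma~\ref{lemma: E11} together with $X^{\Delta}_k=\SZ_k$ for $k\leq\dmin$ gives (iii), and the dimension formula (\ref{eq: dimension C}) gives (iv).

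For part (v) you take a genuinely different route. The paper does \emph{not} redo any Vandermonde analysis at level $\dmin+1$. Instead it invokes \cite[Lemma~2.1]{Mo3}, a general fact about truncated simplicial resolutions: since the extra stratum is obtained by coning each fibre of $h_{\dmin}$ over $Y(\dmin)$, one has
\[
\dim\bigl(X^{\Delta}_{\dmin+1}\setminus X^{\Delta}_{\dmin}\bigr)
=\dim\bigl(\SZ_{\dmin}\setminus\SZ_{\dmin-1}\bigr)+1
= l_{D,\dmin}+\dim C_{\dmin;\Sigma}+1
= 2N(D)+2\dmin-2\rmin(\Sigma)\,\dmin.
\]
Comparing with the cohomological degree $2N(D)+\dmin-s$ then gives the vanishing for $s\leq(2\rmin(\Sigma)-1)\dmin-1$ in one line, with no reference to $(\dmin+1)$-point configurations at all.

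Your overdetermined-Vandermonde plan could in principle be pushed through, but the bookkeeping is more delicate than you indicate. The compatibility relation for each $t$ with $d_t=\dmin$ is a linear condition on the data $(x_j,s_{t,j})$, and whether it genuinely cuts two real dimensions from the parameter space depends on its transversality to the constraint $s_j\in L(\Sigma)$; this varies across the irreducible components $L_\sigma$ of $L(\Sigma)$ (on components with $t\in\sigma$ one has $s_{t,j}=0$ identically, and the condition degenerates to a relation among the $x_j$ alone, which may be vacuous). The paper's appeal to \cite{Mo3} sidesteps all of this and yields exactly the bound needed, at the cost of importing one external lemma; your approach, if made rigorous, would be self-contained and might even give a marginally sharper dimension estimate on $Y(\dmin)$, but requires a case analysis over the primitive collections that the paper avoids.
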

\begin{proof}
Let us write $\rmin =\rmin (\Sigma)$.
Since the proofs of both cases are almost identical,  it suffices to prove the assertions for the case
$E^1_{k,s}$.
Since $X^{\Delta}_k=\SZ_k$ for any $k\geq d_{\rm min}+2$,
the assertions (i) and (ii) are clearly true.
Since $X^{\Delta}_k=\SZ_k$ for any $k\leq d_{\rm min}$,
(iii) easily follows from Lemma \ref{lemma: E11}.
Thus it remains to prove the assertions (iv) and (v).
\par
By using (\ref{eq: dimension C}), we see that
$2rk-s>\dim C_{k;\Sigma}$ if and only if
$s\leq (2r_{min}(\Sigma)-1)k-1$.
Thus $H^{2rk-s}_c(C_{k;\Sigma};\pm \Z)=0$ if
$s\leq (2r_{min}(\Sigma)-1)k-1$ and
the assertion (iv) follows from (iii).
 \par
 Finally consider the group
 $E^1_{\dmin +1,s}$.
Then by Lemma \cite[Lemma 2.1]{Mo3},
\begin{align*}
\dim (X^{\Delta}_{d_{\rm min}+1}\setminus 
X^{\Delta}_{d_{\rm min}})
&=
\dim (\SZ_{d_{\rm min}}\setminus \SZ_{d_{\rm min}-1})+1
=(l_{D,d_{\rm min}}+\dim C_{d_{\rm min};\Sigma})+1
\\
&=2N(D)+2d_{\rm min}-2\rmin d_{\rm min}.
\end{align*}
Since 
$E^1_{d_{\rm min}+1,s}=H_c^{2N(D)+d_{\rm min}-s}
(X^{\Delta}_{d_{\rm min}+1}\setminus X^{\Delta}_{d_{\rm min}};\Z)$
and
$2N(D)+d_{\rm min}-s>\dim (X^{\Delta}_{d+1}\setminus X^{\Delta}_d)$
$\Leftrightarrow$
$s\leq (2\rmin -1)d_{\rm min}-1$,
we see that
$E^1_{d_{\rm min}+1,s}=0$ for any $s\leq (2\rmin -1)d_{\rm min}-1$.
Hence, (v) is proved.
\end{proof}
\begin{dfn}\label{def: connectedness}
{\rm
Define the integer $C_D(\Sigma)$ by
\begin{equation}
C_D(\Sigma)=
2(r_{min}(\Sigma)-1)d_{min}-1.
\end{equation}
Note that $\rmin (\Sigma)\geq 2$ and $d_{min}(D)\geq 1$ for any 
$D\in \N^r$. 
Thus, we see that
$C_D(\Sigma)\geq 1$ for any $D\in \N^r$.
\qed
}
\end{dfn}
\begin{prp}\label{prp: simply connected}
The space $\Pol^*_D(S^1,\XS)$ is
$C_D(\Sigma)$-connected.
Thus, the space $\Pol_D^*(S^1,\XS)$ is simply connected for any $D\in \N^r$.
\end{prp}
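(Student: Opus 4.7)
The plan is to deduce $C_D(\Sigma)$-connectedness from the vanishing of low-dimensional homology of $\Pol_D^*(S^1,\XS)$ via the Vassiliev truncated spectral sequence constructed above. From Remark 2.13 (ii) the space is already known to be path-connected, so by the Hurewicz theorem it suffices to establish that $H_n(\Pol_D^*(S^1, \XS); \Z) = 0$ for all $1 \leq n \leq C_D(\Sigma)$ (this also implies simple connectedness, from which the ``Thus'' clause follows).

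I would apply the spectral sequence (TSS) with $E^1_{k,s} \Rightarrow H_{s-k}(\Pol_D^*(S^1, \XS); \Z)$, reducing $H_n = 0$ to showing $E^1_{k, n+k}=0$ for every $k \geq 0$. Lemma 4.7 handles this column by column: part (ii) kills $k = 0$ whenever $n \neq 0$; part (iv) gives vanishing for $1 \leq k \leq \dmin$ when $s = n+k \leq (2\rmin - 1)k - 1$ (the extremal case $k = \dmin$ covering exactly $n \leq C_D(\Sigma)$); part (v) covers $k = \dmin+1$ for $n \leq C_D(\Sigma) - 1$; and part (i) handles $k \geq \dmin+2$ trivially. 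Assembling these, $E^1_{k, n+k} = 0$ for every $k$ whenever $1 \leq n \leq C_D(\Sigma)$, so that $H_n(\Pol_D^*(S^1, \XS); \Z) = 0$ in that range.

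The Hurewicz theorem then gives $\pi_1 = 0$, and inductively $\pi_k \cong H_k = 0$ for $2 \leq k \leq C_D(\Sigma)$, completing the proof. The main technical obstacle is the careful verification in the column-by-column step: the tightest constraint from Lemma 4.7(iv) comes from small $k$ (especially $k = 1$), so a detailed bookkeeping of the vanishing ranges is required, and possibly an argument via the higher spectral sequence differentials in (TSS) to kill residual $E^1$ terms not directly eliminated by Lemma 4.7, in order to cover the full interval $1 \leq n \leq C_D(\Sigma)$ uniformly.
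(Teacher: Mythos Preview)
Your overall strategy---use the truncated spectral sequence to show $H_n(\Pol_D^*(S^1,\XS);\Z)=0$ for $1\leq n\leq C_D(\Sigma)$, then upgrade to connectivity via Hurewicz---is exactly the paper's. However, there is a genuine gap at the passage from homology to homotopy in degree~$1$. Knowing only that the space is path-connected and that $H_1=0$, the Hurewicz theorem yields $\pi_1^{\mathrm{ab}}\cong H_1=0$, i.e.\ that $\pi_1$ is \emph{perfect}, not that it vanishes. You cannot invoke the Hurewicz isomorphism in degrees $\geq 2$ until simple connectedness is already in hand, so the sentence ``The Hurewicz theorem then gives $\pi_1=0$'' is precisely the step that fails.

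The paper closes this gap with an independent geometric input: using the string representation of \cite[Appendix]{GKY1} one shows directly that $\pi_1(\Pol_D^*(S^1,\XS))$ is abelian. Combined with $H_1=0$ this gives $\pi_1=0$, after which the inductive Hurewicz argument you describe goes through. Without this step (or some equivalent argument excluding a nontrivial perfect fundamental group), your proof of simple connectedness---and hence of the full proposition---is incomplete. Your caution about the column-by-column $E^1$ bookkeeping is well placed, but that is a secondary issue shared with the paper's own argument; the missing abelianness of $\pi_1$ is the essential lacuna.
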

\begin{proof}
it follows from Lemma \ref{lmm: E1=0} 
 that we can easily check that
$E^1_{k,s}=0$ if $s-k\leq 
C_D(\Sigma)=2(r_{min}(\Sigma)-1)d_{min}-1.$
Hence by using the first spectral sequence given in (\ref{TSS}),
we see that $H_i(\Pol_D^*(S^1,\XS);\Z)=0$
for any $1\leq i\leq C_D(\Sigma)$.
\par
On the other hand, by using the string representation as in \cite[Appendix]{GKY1},
one can show that $\pi_1(\Pol_D^*(S^1,\XS))$ is an abelian group and
one know that
there is an isomorphism
$\pi_1(\Pol_D^*(S^1,\XS))\cong H_1(\Pol_D^*(S^1,\XS);\Z)=0.$
Thus, the space $\Pol_D^*(S^1,\XS)$ is simply connected and it follows
from the Hurewicz theorem that the space
$\Pol^*_D(S^1,\XS)$ is
$C_D(\Sigma)$-connected.
\end{proof}
\section{Stabilization maps}\label{section: stabilization map}
\begin{dfn}\label{def: stabilization map}
{\rm
For  an $r$-tuple 
$D=(d_1,\cdots ,d_r)\in \N^r$
of positive integers,
let $N(D)$ denote the positive integer given by
$N(D)=\sum_{k=1}^rd_k$ as in (\ref{eq: ND}),
and
let us write
$U_D=\{w\in \C:\mbox{Re}(w)<N(D)\}$.
For each $D=(d_1,\cdots ,d_r)\in \N^r$
let us choose any any
homeomorphism
\begin{equation}
\varphi_D: \C \stackrel{\cong}{\longrightarrow}U_D
\end{equation}
such that $\varphi_D(\R)=\{w\in \R:w<N(D)\}=(-\infty, N(D))$
and fix it.
\par
(i)
For a monic polynomial $\dis f(z)=\prod_{k=1}^d(z-\alpha_k)\in \P^d(\C)$, let
$\varphi_D(f(z))$ denote the monic polynomial of the same degree $d$ defined by
\begin{equation}
\varphi_D(f(z))=\prod_{k=1}^d(z-\varphi(\alpha_k)).
\end{equation}
\par
(ii)
Now we choose a  point 
$x_D\in \C\setminus \overline{U_D} =\{w\in \C :\mbox{Re}(w)>N(D)\}$ 
freely and fix it.
Then for each $1\leq i\leq r$, 
let us recall
$$
D+\textbf{\textit{e}}_i=(d_1,\cdots ,d_{i-1},d_i+1,d_{i+1},\cdots ,d_r)
$$ 
and
define {\it the stabilization map}
\begin{equation}
s_{D,i}:\Pol_D^*(S^1,\XS) \to \Pol_{D+\textbf{\textit{e}}_i}^*(S^1,\XS)
\quad
\mbox{ by}
\end{equation}
\begin{equation}
s_{D,i}(f)=
(\varphi_D(f_1(z)),\cdots ,
\varphi_D(f_i(z))(z-x_D),
\cdots ,\varphi_D(f_r(z)))
\end{equation}
for $f=(f_1(z),\cdots ,f_r(z))\in \Pol_D^*(S^1,\XS).$
\par
(iii)
Let $D=(d_1,\cdots ,d_r)\in \N^r$ and 
$D^{\prime}=(d_1^{\p},\cdots ,d_r^{\p})\in \N^r$ be $r$-tuples of positive integers
such that
$D^{\p}-D=(d_1^{\p}-d_1,\cdots ,d_r^{\p}-d_r)\in (\N\cup
\{0\})^r\setminus\{{\bf 0}_r\}.$
Now let us choose any
point
$\textbf{\textit{x}}_D
=(x_{D,1},\cdots ,x_{D,r})\in F(\C\setminus \overline{U_D},r)$.
\par
Then
define
{\it the stabilization map}
\begin{equation}
s_{D,D^{\p}}:\Pol_D^*(S^1,\XS) \to \Pol_{D^{\p}}^*(S^1,\XS)
\quad
\mbox{ by}
\end{equation}
\begin{equation}\label{eq: adding points}
s_D(f)=(
(\varphi_D(f_1(z))(z-x_{D,1})^{d_1^{\p}-d_1},\cdots  ,\varphi_D(f_r(z))(z-x_{D,r})^{d_r^{\p}-d_r})
\end{equation}
for $f=(f_1(z),\cdots ,f_r(z))\in \Pol_D^*(S^1,\XS).$
In particular,
when $D^{\p}=D+\e=(d_1+1,d_2+1,\cdots ,d_r+1)$,  we write
\begin{equation}
s_D=s_{D,D^{\p}}=s_{D,D+\e}:\Pol^*_D(S^1,\XS)\to \Pol_{D+\e}^*(S^1,\XS),
\end{equation}
where we set
\begin{equation}
\e =(1,1,\cdots ,1)=\e_1+\e_2+\cdots +\e_r.
\end{equation} 
}
\end{dfn}
\begin{rmk}
{\rm
Note that  the definition of the map $s_{D,i}$  depends on the choice of
the pair
$(\varphi_D,x_D)$, but the homotopy class of the map $s_{D,i}$ does not depend on its choice.
Similarly the definition of the map $s_{D,D^{\p}}$ also depends on the choice of the pair
$(\varphi_d,\textbf{\textit{x}}_D)$, but its homotopy class does not.
\qed
}
\end{rmk}
Now
recall the definition of
the stabilization map
$$
s_{D,i}:\Pol_D^*(S^1,\XS)\to \Pol_{D+\textbf{\textit{e}}_i}^*(S^1,\XS).
$$
By using the choice of the point $x_D\in \C \setminus \overline{U_D}$,
one can easily see that it extends to an open embedding
\begin{equation}\label{equ: sssd}
\overline{s}_{D,i}:\C\times 
\Pol_D^*(S^1,\XS)\to
\Pol_{D+\textbf{\textit{e}}_i}^*(S^1,\XS).
\end{equation}
It also naturally extends to an open embedding
$\tilde{s}_{D,i}:\P^D\to \P^{D_i}$ and  by  restriction  we obtain an open embedding
\begin{equation}\label{equ: open embedding}
\tilde{s}_{D,i}:\C\times \Sigma_D\to 
\Sigma_{D+\textbf{\textit{e}}_i}.
\end{equation}
Since one-point compactification is contravariant for open embeddings,
this map induces a map
$(\tilde{s}_{D+\textbf{\textit{e}}_i})_+:(\Sigma_{D_i})_+
\to
(\C\times \Sigma_D)_+=S^{2}\wedge \Sigma_{D+}.$
\par
Note that there is a commutative diagram
\begin{equation}\label{eq: diagram}
\begin{CD}
\tilde{H}_k(\Pol_D^*(S^1,\XS);\Z) @>{s_{D,i}}_*>>
\tilde{H}_k(\Pol_{D+\textbf{\textit{e}}_i}^*(S^1,\XS);\Z)
\\
@V{Al}V{\cong}V @V{Al}V{\cong}V
\\
H^{2N(D)-k-1}_c(\Sigma_D;\Z)
@>{\tilde{s}_{{D}_i+}}^{\ *}>>
H^{2N(D)-k+1}_c(\Sigma_{D+\textbf{\textit{e}}_i};\Z)
\end{CD}
\end{equation}
where $Al$ is the Alexander duality isomorphism and
 ${\tilde{s}_{{D}_i+}}^{\ *}$ denotes the composite of 
the suspension isomorphism with the homomorphism
${(\tilde{s}_{D+})^*}$,
$$
H^{2N(D)-k-1}_c(\Sigma_D;\Z)
\stackrel{\cong}{\rightarrow}
H^{2N(D)-k+1}_c
(\C\times \Sigma_D;\Z)
\stackrel{(\tilde{s}_{{D}_i+})^*}{\longrightarrow}
H^{2N(D)-k+1}_c(\Sigma_{D+\textbf{\textit{e}}_i};\Z).
$$
By the universality of the non-degenerate simplicial resolution
(\cite[pages 286-287]{Mo2}), 
the map $\tilde{s}_{D,i}$ also naturally extends to a filtration preserving open embedding
$$
\tilde{s}_{D,i}:\C \times \SZ \to \mathcal{X}^{D+\textbf{\textit{e}}_i}
$$
between non-degenerate simplicial resolutions.
This  induces a filtration preserving map
$(\tilde{s}_{D,i})_+:\mathcal{X}^{D+\textbf{\textit{e}}_i}_+\to (\C \times \SZ)_+
=S^{2}\wedge \SZ_+$,
and we obtain the homomorphism of spectral sequences
\begin{equation}\label{equ: theta1}
\{ \tilde{\theta}_{k,s}^t:E^{t;D}_{k,s}\to 
E^{t;D+\textbf{\textit{e}}_i}_{k,s}\},
\quad
\mbox{ where}
\end{equation}
\begin{eqnarray*}
\big\{E^{t;D}_{k,s}, \tilde{d}^{t}:
E^{t;D}_{k,s}\to E^{t;D}_{k+t,s+t-1}
\big\}
\quad &\Rightarrow &
 H_{s-k}(\Pol_{D}^*(S^1,\XS);\Z),
\\
\big\{E^{t;D+\textbf{\textit{e}}_i}_{k,s}, \tilde{d}^{t}:
E^{t;D+\textbf{\textit{e}}_i}_{k,s}\to 
E^{t;D+\textbf{\textit{e}}_i}_{k+t,s+t-1}
\big\}
\quad&\Rightarrow& 
 H_{s-k}(\Pol_{D+\textbf{\textit{e}}_i}^*(S^1,\XS);\Z),
\end{eqnarray*}
and the groups
$E^{1;D}_{k,s}$ and 
$E^{1;D_i}_{k,s}$ are given by
$$
\begin{cases}
E^{1;D}_{k,s}=
H_c^{2N(D)+k-1-s}(\SZ_k\setminus \SZ_{k-1};\Z),
\\ 
E^{1;D+\textbf{\textit{e}}_i}_{k,s}
=
H_c^{2N(D)+k+1-s}
(\mathcal{X}_k^{D+\textbf{\textit{e}}_i}\setminus 
\mathcal{X}_{k-1}^{D+\textbf{\textit{e}}_i};\Z).
\end{cases}
$$
\begin{lmm}\label{lmm: E1}
If $1\leq i\leq r$ and
$0\leq k\leq d_{\rm min}$, 
$\tilde{\theta}^1_{k,s}:E^{1;D}_{k,s}\to 
E^{1;D+\textbf{\textit{e}}_i}_{k,s}$ is
an isomorphism for any $s$.
\end{lmm}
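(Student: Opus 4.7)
The plan is to exploit Lemma~\ref{lemma: vector bundle*} and Lemma~\ref{lemma: E11}, which under the affine bundle structures $\xi_{D,k}\to C_{k;\Sigma}$ and $\xi_{D+\e_i,k}\to C_{k;\Sigma}$ identify both $E^{1;D}_{k,s}$ and $E^{1;D+\e_i}_{k,s}$ with $H^{2rk-s}_c(C_{k;\Sigma};\pm\Z)$ via the Thom isomorphism. Thus the assertion reduces to showing that, under these identifications, $\tilde{\theta}^1_{k,s}$ corresponds to the identity on $H^{2rk-s}_c(C_{k;\Sigma};\pm\Z)$. The case $k=0$ is handled separately: both groups vanish for $s\ne 0$ and equal $\Z$ for $s=0$, with $\tilde{\theta}^1_{0,0}=\mathrm{id}$.

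For $1\le k\le d_{\min}$, the crucial numerical coincidence is $l_{D+\e_i,k}=l_{D,k}+2$, so $\C\times\xi_{D,k}$ and $\xi_{D+\e_i,k}$ are real affine bundles of the same rank $l_{D+\e_i,k}$ over $C_{k;\Sigma}$. The filtration-preserving open embedding $\tilde{s}_{D,i}\colon\C\times\SZ\to\mathcal{X}^{D+\e_i}$ restricts on the $k$-th stratum to a fiberwise open embedding $\C\times\xi_{D,k}\hookrightarrow\xi_{D+\e_i,k}$. I would next verify that the induced map on the common base $C_{k;\Sigma}$ is homotopic to the identity, using that stabilization sends a bad-point configuration $\{(x_j,s_j)\}_j$ to $\{(\varphi_D(x_j),s_j')\}_j$, a map which deforms to the identity through a homotopy of $\varphi_D$ among homeomorphisms $\C\to U_D$. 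Fiberwise, the embedding is then an open inclusion $\R^{l_{D+\e_i,k}}\hookrightarrow\R^{l_{D+\e_i,k}}$, whose image is the sub-bundle parametrized by those systems whose new $i$-th root $w$ lies in the half-plane $\{\mathrm{Re}(w)>N(D)\}$.

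The final step is to conclude that a fiberwise open embedding of same-rank affine bundles covering (a map homotopic to) the identity induces the identity on $H^{2rk-s}_c(C_{k;\Sigma};\pm\Z)$, after passage to one-point compactifications and application of the Thom isomorphism. Fiberwise this reduces to the observation that the induced map on one-point compactifications is a degree-$\pm1$ collapse $S^{l_{D+\e_i,k}}\to S^{l_{D+\e_i,k}}$, which combined with naturality of Thom yields the desired identity. The main obstacle will be establishing this last point cleanly, in a manner compatible with the twisted coefficient system $\pm\Z$ arising from the free $S_k$-action on $L_{k;\Sigma}$. This can either be done by a direct local degree computation, or by invoking a general auxiliary lemma on open embeddings of affine bundles of the same rank over a common base, analogous to techniques developed in \cite{AKY1} and \cite{Mo3}.
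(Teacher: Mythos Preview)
Your proposal is correct and follows essentially the same approach as the paper. The paper's own proof is terser: it simply asserts (from the proof of Lemma~\ref{lemma: vector bundle*}) the existence of a homotopy commutative diagram of affine bundles
\[
\begin{CD}
\C\times (\SZ_k\setminus\SZ_{k-1}) @>>> C_{k;\Sigma}\\
@VVV \Vert @.\\
\mathcal{X}^{D+\e_i}_k\setminus \mathcal{X}^{D+\e_i}_{k-1} @>>> C_{k;\Sigma}
\end{CD}
\]
and then immediately passes to the commuting square of Thom isomorphisms identifying $\tilde{\theta}^1_{k,s}$ with the identity on $H^{2rk-s}_c(C_{k;\Sigma};\pm\Z)$. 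Your write-up expands the same argument: you make explicit the rank computation $l_{D+\e_i,k}=l_{D,k}+2$, describe what the stabilization does to the base configuration $\{(x_j,s_j)\}$, and isolate the residual point (compatibility of the fiberwise open embedding with Thom in the presence of the $\pm\Z$ twist), pointing to \cite{AKY1} and \cite{Mo3} for the standard auxiliary lemma. This is exactly the content the paper suppresses behind the phrase ``homotopy commutative diagram of affine vector bundles,'' so the two arguments are the same in substance.
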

\begin{proof}
Since the case $k=0$ is clear,
suppose that $1\leq k\leq d_{\rm min}$.
It follows from the proof of Lemma \ref{lemma: vector bundle*}
that there is a homotopy commutative diagram of affine vector bundles
$$
\begin{CD}
\C\times (\SZ_k\setminus\SZ_{k-1}) @>>> C_{k;\Sigma}
\\
@VVV \Vert @.
\\
\mathcal{X}^{D+\textbf{\textit{e}}_i}_k\setminus 
\mathcal{X}^{D+\textbf{\textit{e}}_i}_{k-1} @>>> C_{k;\Sigma}
\end{CD}
$$
Hence, 
we have 
a commutative diagram
$$
\begin{CD}
E^{1,D}_{k,s} @>>\cong> H^{2rk-s}_c(C_{k;\Sigma};\pm \Z)
\\
@V{\tilde{\theta}_{k,s}^1}VV \Vert @.
\\
E^{1,D+\textbf{\textit{e}}_i}_{k,s} @>>\cong> H^{2rk-s}_c(C_{k;\Sigma};\pm \Z)
\end{CD}
$$
and the assertion follows.
\end{proof}
Now we consider the spectral sequences induced by 
truncated simplicial resolutions.
\par
By using Lemma \ref{Lemma: truncated} and the same method
as in \cite[\S 2 and \S 3]{Mo3} (cf. \cite[Lemma 2.2]{KY4}), 
we obtain the following {\it  truncated spectral sequences}
\begin{eqnarray*}\label{equ: spectral sequ2}
\big\{E^{t}_{k,s}, d^{t}:E^{t}_{k,s}\to 
E^{t}_{k+t,s+t-1}
\big\}
&\Rightarrow& H_{s-k}(\Pol_{D}^*(S^1,\XS);\Z),
\\
\big\{\ 
^{\p}E^{t}_{k,s},\  d^{t}:\ ^{\p}E^{t}_{k,s}\to 
\  ^{\p}E^{t}_{k+t,s+t-1}
\big\}
&\Rightarrow& H_{s-k}(\Pol_{D+\textbf{\textit{e}}_i}^*(S^1,\XS);\Z),
\end{eqnarray*}
where
$$
E^{1}_{k,s}=H_c^{2N(D)+k-1-s}(X^{\Delta}_k\setminus X^{\Delta}_{k-1};\Z),
\ 
^{\p}E^{1}_{k,s}=H_c^{2(N(D)+k+1-s}(Y^{\Delta}_k\setminus Y^{\Delta}_{k-1};\Z).
$$
By the naturality of truncated simplicial resolutions,
the filtration preserving map
$\tilde{s}_{D,i}:\C\times \SZ \to \mathcal{X}^{D_i}$
  gives rise to a natural filtration preserving map
$\tilde{s}_{D,i}^{\p}:\C\times X^{\Delta} \to Y^{\Delta}$
which, in a way analogous to  (\ref{equ: theta1}), induces
a homomorphism of spectral sequences 
\begin{equation}\label{equ: theta2}
\{ \theta_{k,s}^t:E^{t}_{k,s}\to \ ^{\p}E^{t}_{k,s}\}.
\end{equation}
\begin{lmm}\label{lmm: E2}
If $0\leq k\leq d_{\rm min}$, 
$\theta^1_{k,s}:E^{1}_{k,s}\to \ ^{\p}E^{1}_{k,s}$ is
an isomorphism for any $s$.
\end{lmm}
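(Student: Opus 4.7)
The plan is to reduce Lemma \ref{lmm: E2} directly to the already-established Lemma \ref{lmm: E1}, exploiting the fact that the truncation operation of Definition \ref{def: 2.3} leaves every stratum of filtration index at most $d_{\rm min}$ unchanged. Once this is in hand, the two homomorphisms $\theta^1_{k,s}$ and $\tilde{\theta}^1_{k,s}$ will coincide in the relevant range, and the claim drops out immediately.

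First I would unfold the filtration definitions and record the identifications $X^{\Delta}_k = \mathcal{X}^{D}_k$ and $Y^{\Delta}_k = \mathcal{X}^{D+\textit{\textbf{e}}_i}_k$ for all $0 \leq k \leq d_{\rm min}$. These identifications in turn identify the successive difference strata $X^{\Delta}_k \setminus X^{\Delta}_{k-1}$ and $Y^{\Delta}_k \setminus Y^{\Delta}_{k-1}$ with their non-truncated counterparts $\mathcal{X}^{D}_k \setminus \mathcal{X}^{D}_{k-1}$ and $\mathcal{X}^{D+\textit{\textbf{e}}_i}_k \setminus \mathcal{X}^{D+\textit{\textbf{e}}_i}_{k-1}$, and hence identify the columns $E^1_{k,s}$ and ${}^{\prime}E^1_{k,s}$ of the truncated spectral sequences with the columns $E^{1;D}_{k,s}$ and $E^{1;D+\textit{\textbf{e}}_i}_{k,s}$ of the non-truncated ones as cohomology groups with compact supports.

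Second, I would invoke the naturality of the truncation construction: the filtration-preserving open embedding $\tilde{s}_{D,i} : \C \times \mathcal{X}^{D} \to \mathcal{X}^{D+\textit{\textbf{e}}_i}$ is extended to the map $\tilde{s}_{D,i}^{\prime} : \C \times X^{\Delta} \to Y^{\Delta}$ by attaching the same system of cones above filtration level $d_{\rm min}$ to both source and target. Consequently, on every stratum of index at most $d_{\rm min}$ the map $\tilde{s}_{D,i}^{\prime}$ restricts to $\tilde{s}_{D,i}$, so the induced homomorphism $\theta^1_{k,s}$ agrees with $\tilde{\theta}^1_{k,s}$ in this range, and Lemma \ref{lmm: E1} then finishes the argument.

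The only delicate point is to verify the identification at the boundary stratum $k = d_{\rm min}$, where the truncated and non-truncated resolutions begin to diverge. However, the divergence lives strictly in filtration degrees greater than $d_{\rm min}$ (the cones added by truncation all lie in $X^{\Delta}_{d_{\rm min}+1} \setminus X^{\Delta}_{d_{\rm min}}$), so the stratum $X^{\Delta}_{d_{\rm min}} \setminus X^{\Delta}_{d_{\rm min}-1}$ still coincides with $\mathcal{X}^{D}_{d_{\rm min}} \setminus \mathcal{X}^{D}_{d_{\rm min}-1}$, and analogously for $Y^{\Delta}$. This is the only bookkeeping subtlety; the rest of the argument is formal, and the lemma is essentially a corollary of Lemma \ref{lmm: E1}.
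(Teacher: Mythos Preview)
Your proposal is correct and follows exactly the same approach as the paper: the paper's proof is the single sentence ``Since $(X^{\Delta}_k,Y^{\Delta}_k)=(\mathcal{X}^{D}_k,\mathcal{X}^{D+\textit{\textbf{e}}_i}_k)$ for any $k\leq d_{\rm min}$, the assertion follows from Lemma \ref{lmm: E1},'' which you have simply unpacked in more detail. Your additional remark about the boundary stratum $k=d_{\rm min}$ is accurate and makes explicit a point the paper leaves implicit in the definition of the truncated filtration.
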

\begin{proof}
Since $(X^{\Delta}_k,Y^{\Delta}_k)=(\SZ_k,\mathcal{X}^{D_i}_k)$ for $k\leq \dmin $,
the assertion follows from Lemma \ref{lmm: E1}.
\end{proof}
\begin{thm}\label{thm: III}
For each $1\leq i\leq r$ and $D=(d_1,\cdots ,d_r)\in \N^r$,
the stabilization map
$$
s_{D,i}:\Pol_D^*(S^1,\XS)\to \Pol_{D+\textbf{\textit{e}}_i}^*(S^1,\XS)
$$
is a homotopy equivalence through dimension $d(D,\Sigma)$, where
the number $d(D,\Sigma)$ is given by
$d(D,\Sigma)=2(\rmin (\Sigma )-1)\dmin -2$ as in (\ref{eq: rmin}).
\end{thm}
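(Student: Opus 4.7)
The proof will proceed by comparing the two truncated Vassiliev spectral sequences of (\ref{TSS}) via the morphism $\theta^t_{k,s}$ from (\ref{equ: theta2}) induced by the filtration-preserving extension of $s_{D,i}$; the goal is to establish a homology equivalence through dimension $d(D,\Sigma)$, and then upgrade to a homotopy equivalence via simple connectivity. The key first step is an analysis of the $E^1$-page morphism column by column. By Lemma \ref{lmm: E2}, $\theta^1_{k,s}$ is an isomorphism whenever $0 \leq k \leq d_{\rm min}$, for every $s$. By Lemma \ref{lmm: E1=0}(i), both sides vanish for $k \geq d_{\rm min}+2$. The delicate boundary column is $k = d_{\rm min}+1$, where Lemma \ref{lmm: E1=0}(v) yields vanishing exactly when $s \leq (2\rmin(\Sigma)-1)d_{\rm min}-1$. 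The number $d(D,\Sigma) = 2(\rmin(\Sigma)-1)d_{\rm min}-2$ is calibrated precisely so that the inequality $s - k \leq d(D,\Sigma)$ at $k = d_{\rm min}+1$ rearranges to $s \leq (2\rmin(\Sigma)-1)d_{\rm min}-1$, placing this column in the vanishing regime; hence $\theta^1_{k,s}$ is an isomorphism throughout the strip $s - k \leq d(D,\Sigma)$.

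Next, applying the comparison theorem for spectral sequences (an inductive five-lemma argument, using that the differentials strictly decrease total degree), $\theta^\infty_{k,s}$ is also an isomorphism in this strip. Since both spectral sequences converge, by construction and by Lemma \ref{Lemma: truncated}, to the homologies of the respective polynomial spaces, one concludes that
$$(s_{D,i})_*: H_n(\Pol_D^*(S^1,\XS);\Z) \stackrel{\cong}{\longrightarrow} H_n(\Pol_{D+\textit{\textbf{e}}_i}^*(S^1,\XS);\Z) \qquad \mbox{for all } n \leq d(D,\Sigma).$$

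Finally, since Proposition \ref{prp: simply connected} guarantees that both spaces are simply connected, one upgrades this homology equivalence to a homotopy equivalence through dimension $d(D,\Sigma)$ by passing to the mapping cylinder of $s_{D,i}$ and invoking the relative Hurewicz theorem together with the Whitehead theorem. The main obstacle is the careful bookkeeping at the boundary column $k = d_{\rm min}+1$, but the calibration of $d(D,\Sigma)$ ensures that this column contributes nothing within the relevant strip, so no analysis of the truncated simplicial resolution beyond the vanishing already recorded in Lemma \ref{lmm: E1=0}(v) is required.
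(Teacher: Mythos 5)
Your overall strategy coincides with the paper's: compare the two truncated spectral sequences via the morphism $\theta^t_{k,s}$ of (\ref{equ: theta2}), feed in Lemma \ref{lmm: E2} and Lemma \ref{lmm: E1=0} on the $E^1$-page, deduce a homology equivalence through dimension $d(D,\Sigma)$ via the Alexander duality diagram (\ref{eq: diagram}), and upgrade to a homotopy equivalence using Proposition \ref{prp: simply connected}. The gap is in the passage from $E^1$ to $E^\infty$. The principle you invoke --- $\theta^1$ is an isomorphism on the strip $s-k\leq d(D,\Sigma)$, hence by ``the comparison theorem'' $\theta^\infty$ is an isomorphism on the same strip --- is not valid as stated. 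In the inductive five-lemma step at a position $(k,s)$ one needs, besides an isomorphism at $(k,s)$ and injectivity at the target $(k+t,s+t-1)$, surjectivity of $\theta^t$ at the source $(k-t,s-t+1)$ of the incoming differential, and that source has total degree $s-k+1$; for positions on the top line $s-k=d(D,\Sigma)$ of your strip this is degree $d(D,\Sigma)+1$, outside the strip. Moreover the entries that are genuinely unknown, namely those in the column $k=\dmin+1$ with $s\geq(2\rmin(\Sigma)-1)\dmin$, sit exactly at total degrees $\geq d(D,\Sigma)+1$, and a priori they can contaminate the columns $k\leq \dmin$ on higher pages through the differentials $d^t$. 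So your claim that the boundary column ``contributes nothing within the relevant strip'' is true only on the $E^1$-page; the real content of the theorem is that this contamination never reaches total degree $\leq d(D,\Sigma)$ at $E^\infty$.

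That is precisely the part of the paper's proof which your proposal declares unnecessary: the paper introduces the unknown region $\mathcal{S}_1=\{(\dmin+1,s):s\geq(2\rmin(\Sigma)-1)\dmin\}$, tracks its backward propagation through the differentials via the sets $\mathcal{S}_t$ and $A_t$, and computes $\min\{s-k:(k,s)\in A_t\}=d(D,\Sigma)+t+1\geq d(D,\Sigma)+2$, so that $\theta^\infty_{k,s}$ is an isomorphism for $s-k\leq d(D,\Sigma)+1$ in columns $0\leq k\leq\dmin$, which combined with the vanishing columns gives the isomorphism on the strip $s-k\leq d(D,\Sigma)$. Your stated inputs are in fact strong enough to close the argument, since Lemma \ref{lmm: E2} gives isomorphisms for \emph{all} $s$ in columns $0\leq k\leq\dmin$, and any differential emanating from a position of total degree $\leq d(D,\Sigma)+1$ and landing in column $\dmin+1$ lands at $s\leq(2\rmin(\Sigma)-1)\dmin-1$, inside the vanishing range of Lemma \ref{lmm: E1=0}(v); but one must actually run a page-by-page induction maintaining isomorphisms in degrees $\leq d(D,\Sigma)$ together with surjectivity in degree $d(D,\Sigma)+1$ (or, equivalently, the paper's $A_t$ bookkeeping). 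Without that induction the step from the $E^1$-statement to the $E^\infty$-statement is unjustified; the remaining steps (convergence, compatibility with $(s_{D,i})_*$ via (\ref{eq: diagram}), and the Hurewicz--Whitehead upgrade) are fine.
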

\begin{proof}
We write $\rmin =\rmin (\Sigma)$.
Since  
$\Pol_D^*(S^1,\XS)$ and $\Pol_{D+\textbf{\textit{e}}_i}^*(S^1,\XS)$ 
are simply connected,
it suffices to show that the map $s_{D,i}$ is a homology equivalence through dimension
$d(D,\Sigma)$.

For this purpose, consider the homomorphism
$\theta_{k,s}^t:E^{t}_{k,s}\to \ ^{\p}E^{t}_{k,s}$
of truncated spectral sequences given in (\ref{equ: theta2}).
By using the commutative diagram (\ref{eq: diagram}) 
and the comparison theorem for spectral sequences, 
it suffices to prove that the positive integer $d(D,\Sigma)$ 
has the 
following property:
\begin{enumerate}
\item[$(*)$]
$\theta^{\infty}_{k,s}$
is  an isomorphism for all $(k,s)$ such that $s-k\leq d(D,\Sigma)$.
\end{enumerate}
By Lemma \ref{lmm: E1=0},
$E^1_{k,s}=\ ^{\p}E^1_{k,s}=0$ if
$k<0$, or if $k\geq \dmin +2$, 
or if $k=\dmin +1$ with 
$s\leq (2\rmin -2)\dmin -1$.
Since $(2\rmin -1)\dmin -1-(\dmin+1)=(2\rmin -2)\dmin -2
=d(D,\Sigma)$,
we  see that:
\begin{enumerate}
\item[$(*)_1$]
if $k< 0$ or $k\geq \dmin +1$,
$\theta^{\infty}_{k,s}$ is an isomorphism for all $(k,s)$ such that
$s-k\leq d(D,\Sigma)$.
\end{enumerate}
\par
Next, we assume that $0\leq k\leq \dmin$, and investigate the condition that
$\theta^{\infty}_{k,s}$  is an isomorphism.
Now recall that $E^1_{k,s}=\ ^{\p}E^1_{k,s}=0$ if
$1\leq k\leq \dmin$ with $s\leq (2\rmin -1)k-1$ or if $k=0$ with $s\not=0$.
(by Lemma \ref{lmm: E1=0}).

Note that the groups $E^1_{k_1,s_1}$ and $^{\p}E^1_{k_1,s_1}$ are not known for
$(u,v)\in\mathcal{S}_1=
\{(\dmin+1,s)\in\Z^2:s\geq (2\rmin -1)\dmin \}$.
By considering the differentials $d^1$'s of
$E^1_{k,s}$ and $^{\p}E^1_{k,s}$,
and applying Lemma \ref{lmm: E2}, we see that
$\theta^2_{k,s}$ is an isomorphism if
$(k,s)\notin \mathcal{S}_1 \cup \mathcal{S}_2$, where
$$
\mathcal{S}_2=
\{(u,v)\in\Z^2:(u+1,v)\in \mathcal{S}_1\}
=\{(\dmin ,v)\in \Z^2:v\geq (2\rmin -1)\dmin\}.
$$
A similar argument  shows that
$\theta^3_{k,s}$ is an isomorphism if
$(k,s)\notin \bigcup_{t=1}^3\mathcal{S}_t$, where
$\mathcal{S}_3=\{(u,v)\in\Z^2:(u+2,v+1)\in \mathcal{S}_1\cup
\mathcal{S}_2\}.$
Continuing in the same fashion,
considering the differentials
$d^t$'s on $E^t_{k,s}$ and $^{\p}E^t_{k,s}$
and applying the inductive hypothesis,
we  see that $\theta^{\infty}_{k,s}$ is an isomorphism
if $\dis (k,s)\notin \mathcal{S}:=\bigcup_{t\geq 1}\mathcal{S}_t
=\bigcup_{t\geq 1}A_t$,
where  $A_t$ denotes the set
$$
A_t=
\left\{
\begin{array}{c|l}
 &\mbox{ There are positive integers }l_1,\cdots ,l_t
\mbox{ such that},
\\
(u,v)\in \Z^2 &\  1\leq l_1<l_2<\cdots <l_t,\ 
u+\sum_{j=1}^tl_j=\dmin +1,
\\
& \ v+\sum_{j=1}^t(l_j-1)\geq (2\rmin -1)\dmin
\end{array}
\right\}.
$$
Note that 
if this set was empty for every $t$, then, of course, the conclusion of 
Theorem \ref{thm: III} would hold in all dimensions (this is known to be false in general). 
If $\dis A_t\not= \emptyset$, it is easy to see that
$$
a(t)=\min \{s-k:(k,s)\in A_t\}=
(2\rmin -1)\dmin -(\dmin +1)+t
=d(D,\Sigma)+t+1.
$$
Hence, we obtain that
$\min \{a(t):t\geq 1,A_t\not=\emptyset\}=d(D,\Sigma)+2.$
 Since $\theta^{\infty}_{k,s}$ is an isomorphism
for any $(k,s)\notin \bigcup_{t\geq 1}A_t$ for each $0\leq k\leq \dmin$,
we have the following:
\begin{enumerate}
\item[$(*)_2$]
If $0\leq k\leq \dmin$,
$\theta^{\infty}_{k,s}$ is  an isomorphism for any $(k,s)$ such that
$s-k\leq  d(D,\Sigma)+1.$
\end{enumerate}
Then, by $(*)_1$ and $(*)_2$, we know that
$\theta^{\infty}_{k,s}:E^{\infty}_{k,s}\stackrel{\cong}{\rightarrow}
\ ^{\p}E^{\infty}_{k,s}$ is an isomorphism for any $(k,s)$
if $s-k\leq d(D,\Sigma)$, and this completes the proof.
\end{proof}
\begin{thm}\label{thm: IV}
For each $D=(d_1,\cdots ,d_r)\in \N^r$ and
$D^{\p}=(d_1^{\p},\cdots ,d_r^{\p})\in \N^r$ with
$D^{\p}-D\in (\Z_{\geq 0})^r\setminus
\{{\bf 0}_r\}$,
the stabilization map
$$
s_{D,D^{\p}}:\Pol_D^*(S^1,\XS)\to \Pol_{D^{\p}}^*(S^1,\XS)
$$
is a homotopy equivalence through dimension $d(D,\Sigma)$, where
the number $d(D,\Sigma)$ is given by
$d(D,\Sigma)=2(\rmin (\Sigma )-1)\dmin -2$ as in (\ref{eq: rmin}).
\end{thm}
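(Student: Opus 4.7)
The plan is to reduce the general stabilization map $s_{D,D^{\p}}$ to an iterated composition of the one-step stabilization maps $s_{E,i}$ treated in Theorem \ref{thm: III}, and then to read off the stability range from the fact that $d_{\min}$ is non-decreasing along the iteration.

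Write $D^{\p}-D=(m_1,\dots ,m_r)\in (\Z_{\geq 0})^r\setminus\{{\bf 0}_r\}$ and set $N=\sum_{k=1}^r m_k\geq 1$. Choose any sequence of indices $i_1,i_2,\dots ,i_N\in [r]$ in which the index $k$ appears exactly $m_k$ times, and put $D^{(0)}=D$, $D^{(j)}=D^{(j-1)}+\textbf{\textit{e}}_{i_j}$ for $1\leq j\leq N$, so that $D^{(N)}=D^{\p}$. First I would verify that the composite
$$
s_{D^{(N-1)},i_N}\circ s_{D^{(N-2)},i_{N-1}}\circ \cdots \circ s_{D^{(0)},i_1}:\Pol^*_D(S^1,\XS)\to \Pol^*_{D^{\p}}(S^1,\XS)
$$
is homotopic to $s_{D,D^{\p}}$. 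Both maps are defined by appending fixed linear factors $(z-x)$ with suitable support, so up to a choice of the auxiliary points $x$ (and the homeomorphisms $\varphi$) they coincide; by the remark after Definition \ref{def: stabilization map} the homotopy class is independent of these choices.

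Next, each one-step map $s_{D^{(j-1)},i_j}:\Pol^*_{D^{(j-1)}}(S^1,\XS)\to \Pol^*_{D^{(j)}}(S^1,\XS)$ is, by Theorem \ref{thm: III}, a homotopy equivalence through dimension
$$
d(D^{(j-1)},\Sigma)=2(\rmin(\Sigma)-1)d_{\min}(D^{(j-1)})-2.
$$
Since $D^{(j)}-D\in (\Z_{\geq 0})^r$ for every $j$, we have the monotonicity $d_{\min}(D^{(j-1)})\geq d_{\min}(D)=d_{\min}$, whence $d(D^{(j-1)},\Sigma)\geq d(D,\Sigma)$ for all $j$. Therefore every factor in the above composite is a homotopy equivalence through dimension $d(D,\Sigma)$.

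Finally, recall that if $f:X\to Y$ and $g:Y\to Z$ are both homotopy equivalences through dimension $N$ (and $X,Y,Z$ are simply connected, which holds here by Proposition \ref{prp: simply connected}), then so is $g\circ f$: this is immediate from the five-lemma applied to the induced maps on $\pi_k$ for $k\leq N$. Iterating this observation $N$ times shows that $s_{D,D^{\p}}$ itself is a homotopy equivalence through dimension $d(D,\Sigma)$. The main point, and the only place where care is needed, is the monotonicity step $d(D^{(j-1)},\Sigma)\geq d(D,\Sigma)$; it is what allows the a priori weaker one-step bounds to assemble into the uniform bound $d(D,\Sigma)$ promised by the statement.
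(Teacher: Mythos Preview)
Your proof is correct and follows essentially the same approach as the paper: factor $s_{D,D^{\p}}$ as a composite of one-step stabilization maps $s_{E,i}$, apply Theorem \ref{thm: III} to each factor, and use that $d_{\min}(D^{(j)})\geq d_{\min}(D)$ along the chain to obtain the uniform bound $d(D,\Sigma)$. The only cosmetic difference is that the paper groups the one-step maps coordinate by coordinate (first all increments in the first slot, then the second, etc.), whereas you allow an arbitrary ordering of the increments; both reductions rest on the same remark that the homotopy class of a stabilization map is independent of the auxiliary choices.
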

\begin{proof}
For each $1\leq k\leq r$, let us write
$m_k=d_k^{\p}-d_k$.
Let
$D=D_0=(d_1,\cdots ,d_r)$,
and for each $1\leq k\leq r$ let us write
\begin{align*}
D_k&=
D+\sum_{i=1}^km_i\e_i
=
(d_1+m_1,d_2+m_2,\cdots ,d_k+m_k,
d_{k+1},d_{k+2},\cdots ,d_r)
\\
&=
(d_1^{\p},d_2^{\p},\cdots ,d_k^{\p},d_{k+1},d_{k+2},\cdots ,d_r).
\end{align*}
Note that $D_r=(d_1^{\p},\cdots ,d_r^{\p})=D^{\p}$ for $k=r$.
Now for each $1\leq i<r$, define the map
$
f_i:\Pol^*_{D_{i-1}}(S^1,\XS)\to \Pol^*_{D_i}(S^1,\XS)
$
by the composite of stabilization maps
$$
f_i=(s_{D_{i-1}+(m_i-1)\e_i,i})\circ
(s_{D_{i-1}+(m_i-2)\e_i,i})\circ
\cdots
\circ
(s_{D_{i-1}+2\e_i,i})\circ
(s_{D_{i-1}+\e_i,i})\circ (s_{D_{i-1},i}).
$$
Since the stabilization map $s_{D_{i-1}+k\e_i,i}$ 
is a homotopy equivalence through dimension
$d(D_{i-1}+k\e_i,\Sigma)$
by
Theorem \ref{thm: III} and
$d(D_{i-1}+k\e_i,\Sigma)\leq d(D_{i-1}+(k+1)\e_i,\Sigma)$ for each
$0\leq k<r$, the map $f_i$ is a homotopy equivalence through dimension
$d(D_{i-1},\Sigma)$.
Furthermore, we can easily see that
$$
s_{D,D^{\p}}=f_{r-1}\circ f_{r-2}\circ \cdots \circ f_2\circ f_1
\quad
\mbox{(up to homotopy equivalence)}
$$
Since
$$
d(D,\Sigma)=d(D_0,\Sigma)\leq d(D_1,\Sigma)\leq d(D_2,\Sigma)\leq \cdots 
\leq d(D_r,\Sigma)=d(D^{\p},\Sigma),
$$
the map 
$s_{D,D^{\p}}$ is a homotopy equivalence through dimension
$d(D,\Sigma)$.
\end{proof}

\begin{dfn}\label{dfn: the map jD}
{\rm
Let $D=(d_1,\cdots ,d_r)\in \N^r$ be an $r$-tuple of positive integers.
If the condition $\sum_{k=1}^rd_k\textbf{\textit{n}}_k={\bf 0}_n$ is satisfied,
the map
\begin{equation}
j_D:\Pol_D^*(S^1,\XS)\to \Omega \mathcal{Z}_{\KS}
\end{equation}
was already defined in (\ref{eq: lift}), and we shall
define it when
$\sum_{k=1}^rd_k\textbf{\textit{n}}_k\not= {\bf 0}_n$.
\par
Suppose that $\sum_{k=1}^rd_k\textbf{\textit{n}}_k\not= {\bf 0}_n$.
By the assumption (\ref{equ: homogenous}.1),
there is an $r$-tuple
$D_*=(d_1^*,\cdots ,d_r^*)\in \N^r$ such that
$\sum_{k=1}^rd_k^*\textbf{\textit{n}}_k= {\bf 0}_n$.
If we choose a sufficiently large integer $m_0\in \N$, then 
the following condition holds:
\begin{equation}
d_k<m_0d_k^* \quad \mbox{ for each }\ 1\leq k\leq r.
\end{equation}
Now we set $D_0=m_0D_*=(m_0d^*_1,m_0d^*_2,\cdots ,m_0d^*_r)$
and
define the map
\begin{equation}\label{eq:jDD}
j_D:\Pol_D^*(S^1,\XS)\to \Omega \mathcal{Z}_{\KS}
\end{equation}
as the composite of maps
\begin{equation}\label{eq:jDDD}
j_D=j_{D_0}\circ s_{D,D_0}:
\Pol_{D}^*(S^1,\XS)
\stackrel{s_{D,D_0}}{\longrightarrow}
\Pol_{D_0}^*(S^1,\XS)
\stackrel{j_{D_0}}{\longrightarrow}
\Omega \mathcal{Z}_{\KS}.
\end{equation}
Similarly, for each $r$-tuple $D=(d_1,\cdots ,d_r)\in \N^r$ of positive integers, 
define the map
\begin{equation}\label{eq: iDD}
i_D:\Pol^*_D(S^1,\XS)\to \Omega \XS
\end{equation}
as the composite of maps}
\begin{equation}
i_D=q_{\Sigma}\circ j_D.
\end{equation}
\end{dfn}
\begin{rmk}
{\rm
Note that the maps $j_D$ and $i_D$ depend on the choice of the pair $(D_*,m_0)$,
but one can show that their homotopy classes do not depend on this pair.
\qed
}
\end{rmk}
\section{Scanning maps}\label{section: scanning map}
%

In this section we consider configuration spaces and the scanning map.
In particular, we recall the definition of the horizontal scanning map and prove that it is 
a homotopy equivalence.

\begin{dfn}\label{dfn: SP}
{\rm
 For a positive integer $d\geq 1$ and a based space $X,$ let
 $\SP^d(X)$ denote {\it the $d$-th symmetric product} of $X$ defined as
the orbit space 
\begin{equation}
\SP^d(X)=X^d/S_d,
\end{equation}
where the symmetric group $S_d$ of $d$ letters acts on the $d$-fold
product $X^d$ in the natural manner.
\qed
}
\end{dfn}

\begin{rmk}
{\rm
(i)
Note that
an element $\eta\in \SP^d(X)$ may be identified with a formal linear
combination
\begin{equation}
\eta =\sum_{k=1}^sn_kx_k,
\end{equation}
where
$\{x_k\}_{k=1}^s\in C_s(X)$ 
and $\{n_k\}_{k=1}^s\subset \N$ with
$\sum_{k=1}^sn_k=d$.
\par
(ii)
For example, when $X=\C$, we have the natural homeomorphism
\begin{equation}\label{eq: identification}
\psi_d:
\P^d(\C)\stackrel{\cong}{\longrightarrow}\SP^d(\C)
\end{equation}
given by using the above identification
\begin{equation}\label{eq:homeo}
\psi_d(f(z))=
\psi_d(\prod_{k=1}^s(z-\alpha_k)^{d_k})= \sum_{k=1}^sd_k\alpha_k
\end{equation}
for $f(z)=\prod_{k=1}^s(z-\alpha_k)^{d_k}\in \P^d(\C)$.
\qed
}
\end{rmk}

\begin{dfn}
{\rm
(i) For a subspace $A\subset X$, let $\SP^d(X,A)$ denote the quotient space
\begin{equation}
\SP^d(X,A)=\SP^d(X)/\sim
\end{equation}
where the equivalence relation $\sim$ is defined by
$$
\xi\sim \eta \Leftrightarrow
\xi \cap (X\setminus A)=\eta \cap (X\setminus A)
\quad
\mbox{ for }\xi,\eta\in \SP^d(X).
$$
Thus, the points of $A$ are ignored.
When $A\not=\emptyset$, by adding a point in $A$ we have the natural
inclusion
$$
\SP^d(X,A)\subset \SP^{d+1}(X,A).
$$
Thus, when $A\not=\emptyset$, one can define the space
$\SP^{\infty}(X,A)$ by the union
\begin{equation}
\SP^{\infty}(X,A)=\bigcup_{d\geq 0}\SP^d(X,A),
\end{equation}
where we set $\SP^0(X,A)=\{\emptyset\}$ and
$\emptyset$ denotes the empty configuration.
\par
(ii)
From now on, we always assume that $X\subset \C$.
For each $r$-tuple $D=(d_1,\cdots ,d_r)\in \N^r$,
let $\SP^D(X)=\prod_{i=1}^r\SP^{d_i}(X)$, and
define the spaces
$\ES_D(X)$ and $E^{\Sigma}_D(X)$  
by
\begin{align*}
\ES_D(X)
&=
\{(\xi_1,\cdots ,\xi_r)
\in \SP^D(X)
: (\bigcap_{k\in \sigma}\xi_k)\cap \R=\emptyset 
\ \mbox{for any }\sigma\in I(\KS)\},
\\
E^{\Sigma}_D(X)
&=
\{(\xi_1,\cdots ,\xi_r)
\in \SP^D(X)
: \ \bigcap_{k\in \sigma}\xi_k=\emptyset
\ \mbox{for any }\sigma\in I(\KS)\}.
\end{align*}
\par
(iii)
When $A\subset X$ is a subspace, define
an equivalence relation \lq\lq$\sim$\rq\rq \ on
the space $\ES_D(X)$ 
(resp. the space $E^{\Sigma}_D(X)$) 
by
$$
(\xi_1,\cdots ,\xi_r)\sim
(\eta_1,\cdots ,\eta_r)
\quad
\mbox{if }\quad
\xi_i \cap (X\setminus A)=\eta_i \cap (X\setminus A)
\quad
\mbox{for each }1\leq j\leq r.
$$
Let 
$\ES_D(X,A)$ 
and $E^{\Sigma}_D(X,A)$ 
be the quotient spaces
defined by
\begin{equation*}
\ES_D(X,A)=\ES_D(X)/\sim ,
\mbox{ and }\ 
E^{\Sigma}_D(X,A)=E^{\Sigma}_D(X)/\sim .
\end{equation*}
When $A\not=\emptyset$,  by
adding points in $A$ we have  natural inclusions
\begin{equation*}\label{eq: D+k}
\ES_{D}(X,A)\subset \ \ES_{D+\e_i}(X,A),
\quad
E^{\Sigma}_D(X,A)\subset \ E^{\Sigma}_{D+\e_i}(X,A)
\end{equation*}
for each $1\leq i\leq r,$
where
$D+\e_i=(d_1,\cdots,d_{i-1},d_i+1,d_{i+1},\cdots ,d_r)$
as in (\ref{eq: Di}).
Thus,  when $A\not=\emptyset$, one can define the spaces
$\ES (X,A)$ 
and
$E^{\Sigma}_{}(X,A)$ 
by the unions
\begin{equation}
\ES_{}(X,A) =\bigcup_{D\in \N^r}
\ES_{D}(X,A),
\quad
E^{\Sigma}_{}(X,A) =\bigcup_{D\in \N^r}
E^{\Sigma}_{D}(X,A),
\end{equation}
where the empty configuration
$(\emptyset ,\cdots ,\emptyset)$ is the base-point of
$\ES (X,A)$ or $E^{\Sigma}(X,A)$.
\qed
}
\end{dfn}
\begin{rmk}\label{rmk: ESigma}
{\rm
(i)
If $X\subset \R$,  the following equalities hold:
\begin{equation*}
\ES_D (X)=E^{\Sigma}_D(X)
\quad
\mbox{and}
\quad
\ES (X,A)=E^{\Sigma}(X,A).
\end{equation*}
\par
(ii)
Let $D=(d_1,\cdots ,d_r)\in \N^r$.
Then 
by using the identification
(\ref{eq: identification})
we easily obtain the homeomorphism 
\begin{equation}\label{eq: Pol=E}
\begin{CD}
\Pol^*_D(S^1,\XS) @>\Psi_D>\cong> \ES_D(\C)
\\
(f_1(z),\cdots ,f_r(z))
@>>>
(\psi_{d_1}(f_1(z)),\cdots ,\psi_{d_r}(f_r(z)))
\end{CD}
\end{equation}
\par
(iii)
Now 
let $\varphi_D:\C \stackrel{\cong}{\longrightarrow}U_D$
and
$\textbf{\textit{x}}_D=(x_{D,1},\cdots ,x_{D,r})\in F(\C\setminus \overline{U_D},r)$ 
be the homeomorphism and the point for
defining the stabilization map $s_{D}$
in Definition \ref{def: stabilization map}.
Then define the map
\begin{equation}
s_{D}^{\Sigma}:\ES_D(\C) \to \ES_{D+\e}(\C)
\qquad
\mbox{ by}
\end{equation}
\begin{equation*}\label{eq: stab-sigma}
s_{D}^{\Sigma}(\xi_1,\cdots ,\xi_r)=
(\varphi_D(\xi_1)+x_{D,1},\cdots
,\varphi_D(\xi_r)+x_{D,r})
\end{equation*}
for
$(\xi_1,\cdots ,\xi_r)\in \ES_D$, where we write
$\varphi_D(\xi)=\sum_{k=1}^sn_k\varphi_D(x_k)$
if $\xi=\sum_{k=1}^sn_kx_k\in \SP^d(\C)$
and $(n_k,x_k)\in \N\times \C$ with $\sum_{k=1}^sn_k=d$.
\par
Then by using the above homeomorphism (\ref{eq: Pol=E}),
we have the following commutative diagram
\begin{equation}\label{CD: stabilization}
\begin{CD}
\Pol_D^*(S^1,\XS) @>s_{D}>> \Pol_{D+\e}^*(S^1,\XS)
\\
@V{\Psi_D}V{\cong}V @V{\Psi_{D+\e}}V{\cong}V
\\
\ES_D(\C) @>s_{D}^{\Sigma}>> \ES_{D+\e}(\C)
\end{CD}
\end{equation}
\par
(iv)
Note that $\ES_D(\C)$ is path-connected.
Indeed, for any two points 
$
\xi_0,\xi_1\in E^{\Sigma}_D(\C),
$
one can construct
a path $\omega :[0,1]\to \ES_D(\C)$ such that
$\omega (i)=\xi_i$ for $i\in\{0,1\}$
by using the string representation 
used in  \cite[\S Appendix]{GKY1}. 
Thus the space $\Pol_D^*(S^1,\XS)$ is also path-connected. 
\qed
}
\end{rmk}
\begin{dfn}
{\rm
Define the stabilized space $\Pol_{D+\infty}^{\Sigma}$ by
the colimit
\begin{equation}\label{eq: stabilized space}
\Pol_{D+\infty}^{\Sigma}=
\lim_{k\to\infty}
\Pol^*_{D+k\e}(S^1,\XS),
\end{equation}
where  the colimt is taken from the family of stabilization maps}
$$
\{s_{D+k\e}:\Pol_{D+k\e}^*(S^1,\XS)\to \Pol_{D+(k+1)\e}^*(S^1,\XS)\}_{k\geq 0}
\quad\quad
\qed
$$
\end{dfn}
Now we are ready to define the scanning map.
From now on, we identify $\C=\R^2$ in a usual way.
\begin{dfn}
{\rm
For a rectangle $X$ in $\C =\R^2$, let $\sigma X$ denote
the union of the sides of $X$ which are parallel to the $y$-axis, and
for a subspace $Z\subset \C=\R^2$, let $\overline{Z}$ be the closure of $Z$.
From now on, let $I$ denote the interval
$I=[-1,1]$
and
let $0<\epsilon <\frac{1}{1000}$ be any fixed real number.
\par
For each $x\in\R$, let $V(x)$ be the set defined by
\begin{eqnarray}
V(x)
&=&
\{w\in\C: \vert \mbox{Re}(w)-x\vert  <\epsilon , \vert\mbox{Im}(w)\vert<1 \}
\\
\nonumber
&=&
(x-\epsilon ,x+\epsilon )\times (-1,1),
\end{eqnarray}
and let's identify $I\times I=I^2$ with the closed unit rectangle
$\{t+s\sqrt{-1}\in \C: -1\leq t,s\leq 1\}$ 
in $\C$.
\par
For each $D=(d_1,\cdots ,d_r)\in \N^r$,
define {\it the horizontal scanning map}
\begin{equation}
sc^{}_D:\ES_{D}(\C )\to \Omega \ES (I^2,\partial I\times I)
=\Omega \ES (I^2,\sigma I^2)
\end{equation}
as follows.
For each $r$-tuple 
$\alpha =(\xi_1,\cdots ,\xi_r)\in \ES_D(\C)$
of configurations,
let
$
sc^{}_D(\alpha):\R \to \ES (I^2,\partial I\times I)
=\ES (I^2,\sigma I^2)
$
denote the map given by
\begin{align*}
\R\ni x
&\mapsto
(\xi_1\cap\overline{V}(x),\cdots ,\xi_r\cap\overline{V}(x))
\in
\ES (\overline{V}(x),\sigma \overline{V}(x))
\cong \ES_{}(I^2,\sigma I^2),
\quad
\end{align*}
where 
we use the canonical identification
$(\overline{V}(x),\sigma \overline{V}(x))
\cong (I^2,\sigma I^2).$
\par\vspace{2mm}\par
Since $\dis\lim_{x\to\pm \infty}sc^{}_D(\alpha)(x)
=(\emptyset ,\cdots ,\emptyset)$,  
by setting $sc^{}_D(\alpha)(\infty)=(\emptyset ,\cdots ,\emptyset)$
we obtain a based map
$sc^{}_D(\alpha)\in \Omega \ES(I^2,\sigma I^2),$
where we identify $S^1=\R \cup \infty$ and
we choose the empty configuration
$(\emptyset ,\cdots ,\emptyset)$ as the base-point of
$\ES (I^2,\sigma I^2)$.
One can show that the following  diagram is homotopy commutative:
\begin{equation}\label{CD: scanning}
\begin{CD}
\ES_{D+k\e} (\C)@>sc_{D+k\e}>> \Omega \ES (I^2,\sigma I^2)
\\
@V{s_{D+k\e}^{\Sigma}}VV \Vert @.
\\
\ES_{D+(k+1)\e}(\C) @>sc_{D+(k+1)\e}>> \Omega \ES (I^2,\sigma I^2) 
\end{CD}
\end{equation}
Thus by using the above diagram
and by identifying
$\Pol^*_{D+k\e}(S^1,\XS)$ with 
$\ES_{D+k\e}(\C)$,
we finally obtain {\it the stable horizontal scanning map}
\begin{equation}\label{eq: stable horizontal}
S
=\lim_{k\to \infty}sc_{D+k\e}
:\Pol_{D+\infty}^{\Sigma}
\to
\Omega
\ES (I^2,\sigma I^2),
\end{equation}
where
$\Pol_{D+\infty}^{\Sigma}$ is defined in
(\ref{eq: stabilized space}).
\par
(ii)
If we identify
$(I,\partial I)=(I\times \{0\},\partial I\times \{0\})$, then
we notice that
$(\xi_1\cap \R,\cdots ,\xi_r\cap \R)\in \ES (I,\partial I)$
for $(\xi_1,\cdots ,\xi_r)\in \ES (I^2,\sigma I^2).$ 
Thus one can define the map
$R_{\Sigma}:\ES (I^2,\sigma I^2) \to \ES (I,\partial I)$ by
\begin{equation}
R_{\Sigma}(\xi_1,\cdots ,\xi_r)=(\xi_1\cap \R,\cdots ,\xi_r\cap \R)
\ \ 
\mbox{for $(\xi_1,\cdots ,\xi_r)\in \ES (I^2,\sigma I^2)$.}
\end{equation}
\par
(iii)
Let $S^H:\Pol^{\Sigma}_{D+\infty}\to \Omega \ES (I,\partial I)$
denote {\it the horizontal scanning map} defined by
the composite of maps
\begin{equation}
S^H=(\Omega R_{\Sigma})\circ S:
\Pol^{\Sigma}_{D+\infty}
\stackrel{S}{\longrightarrow}
\Omega \ES (I^2,\sigma I^2)
\stackrel{\Omega R_{\Sigma}}{\longrightarrow}
\Omega \ES (I,\partial I),
\end{equation}
where
$S$ denotes the stable horizontal scanning map given by 
(\ref{eq: stable horizontal}).
\qed
}
\end{dfn}
\begin{lmm}\label{lmm: ES}
The map
$R_{\Sigma}:\ES (I^2,\sigma I^2) 
\stackrel{\simeq}{\longrightarrow} \ES (I,\partial I)=E^{\Sigma}(I,\partial I)$
is a deformation retraction.
\end{lmm}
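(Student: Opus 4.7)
I propose to exhibit $R_\Sigma$ as a strong deformation retraction. First, I would define the natural inclusion $i\colon \ES(I,\partial I)\hookrightarrow \ES(I^2,\sigma I^2)$ by regarding $I$ as the slice $I\times\{0\}\subset I^2$; a configuration supported on the real axis manifestly satisfies the $\ES$ condition for $(I^2,\sigma I^2)$, and a direct computation yields $R_\Sigma\circ i = \mathrm{id}_{\ES(I,\partial I)}$ because $\xi_k\cap\R = \xi_k$ when $\xi_k\subset I$. Thus $i$ is a section of $R_\Sigma$, and it remains to produce the retraction homotopy.

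The core step is to build a homotopy $H\colon \ES(I^2,\sigma I^2)\times[0,1]\to \ES(I^2,\sigma I^2)$ with $H_0 = \mathrm{id}$, $H_1 = i\circ R_\Sigma$, and $H_t\circ i = i$ for every $t$. I would produce $H_t$ by choosing a continuous family $\phi_t\colon I^2\to I^2$ with $\phi_0=\mathrm{id}_{I^2}$, with $\phi_t$ fixing $I\times\{0\}$ for all $t$, and with $\phi_1$ mapping $I^2\setminus(I\times\{0\})$ into $\sigma I^2$. Such a family exists in principle because the ``comb'' $C := (I\times\{0\})\cup\sigma I^2$ is contractible and sits inside $I^2$ as a cofibrant subcomplex, so a strong deformation retraction of $I^2$ onto $C$ fixing $I\times\{0\}$ can be written down. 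I then set $H_t(\xi_1,\dots,\xi_r) := (\phi_t(\xi_1),\dots,\phi_t(\xi_r))$.

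For this to deliver a well-defined map into $\ES(I^2,\sigma I^2)$ at each time $t$, the trajectories of $\phi_t$ on non-real points must be arranged so that the $\ES$ condition survives: any common real point of the image $\bigcap_{k\in\sigma}\phi_t(\xi_k)$ for $\sigma\in I(\KS)$ would have to originate from a common real point of the input, which is forbidden. The cleanest way to guarantee this is to require that during the homotopy each non-real point of $I^2$ either stays off the real axis or enters $\sigma I^2$ (and is therefore forgotten), so no real collision is ever created. Granted such $\phi_t$, the verifications are immediate: $H_0=\mathrm{id}$ and $H_t\circ i = i$ follow from the first two properties of $\phi_t$, while $H_1(\xi_1,\dots,\xi_r)$ retains only the points of $\xi_k$ lying on $I\times\{0\}$ and discards the rest, giving exactly $i(R_\Sigma(\xi_1,\dots,\xi_r))$.

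The main obstacle is the explicit construction of $\phi_t$ with the stated trajectory property. A naive vertical collapse $\phi_t(x+iy)=(x,(1-t)y)$ fails because at $t=1$ two distinct non-real points with the same real part project to the same real point, producing a forbidden intersection in $\R$; and a purely horizontal push to the nearer vertical side suffers a continuity failure at $x=0$. The remedy is to move non-real points horizontally toward one of the vertical sides of $I^2$ with the speed weighted by $|y|$ and by a rescaled time parameter, so that points with small $|y|$ take proportionally longer to arrive at $\sigma I^2$ and the motion degenerates continuously to the identity along $y=0$. This time-dilated construction is the only technical point; once it is in place, combining the properties listed above with the observation that $\phi_t$ fixes $C$ immediately yields the asserted deformation retraction.
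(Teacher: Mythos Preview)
Your approach is the paper's: push the non-real points of each $\xi_k$ into $\sigma I^2$ by applying coordinatewise a family $\phi_t$ of self-maps of $I^2$ that fixes the real segment, so that at time $1$ only the real part of each configuration survives. The paper carries this out in two stages rather than one: first a vertical push
\[
a+bi\ \longmapsto\ a+\Bigl((1-t)b+\tfrac{t}{2}\,\operatorname{sgn}(b)\Bigr)i
\]
collapsing non-real points onto the lines $y=\pm\tfrac12$ while fixing $y=0$, which gives a retraction of $\ES(I^2,\sigma I^2)$ onto $\ES\bigl(\Pi,\partial I\times\{0,\pm\tfrac12\}\bigr)$ with $\Pi=I\times\{0,\pm\tfrac12\}$; then a horizontal slide on the two non-real components of $\Pi$ sending every point to $x=1$, where it is forgotten. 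Your one-stage horizontal push with $|y|$-weighted speed is a reasonable variant of the same idea and sits at the same level of rigor.

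One caveat worth recording: the literal requirement that a \emph{continuous} $\phi_1\colon I^2\to I^2$ fix $I\times\{0\}$ pointwise and send its complement into $\sigma I^2$ cannot be met, since $(0,0)$ is interior to $I^2$ and is a limit of non-real points; your time-dilated family is continuous on $I^2\times[0,1)$ but not at $t=1$. The paper's first-stage vertical push is likewise discontinuous at $y=0$ for every $t>0$, so this is a shared informal step rather than a defect particular to your proposal.
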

\begin{proof}
We identify
$I^2=\{a+b\sqrt{-1}\in \C: -1\leq a,b\leq 1\}\subset \C$ as before.
Let $\Pi\subset I^2$ denote the subspace defined by
$\Pi =\{a+b\sqrt{-1}\in I^2:b\in \{0,\pm \frac{1}{2}\}\}.$
For $b\in \R$, let $\epsilon (b)=\frac{b}{|b|}$ if $b\not=0$ and $\epsilon (0)=0$.
Now consider the homotopy $\varphi :I^2\times [0,1]\to I^2$ given by
$\varphi (\alpha,t)=a+\{(1-t)b+\frac{\epsilon (b)t}{2}\}\sqrt{-1}$
for
$\alpha =a+b\sqrt{-1}\in I^2$
$(a,b\in\R)$.
By means of this homotopy, one can define a deformation retraction
$
R:\ES (I^2,\sigma I^2)
\stackrel{\simeq}{\longrightarrow}
\ES (\Pi ,\partial I\times \{0,\pm \frac{1}{2}\}).
$
\par
Next, by using the homotopy given by
$f_t(a+b\sqrt{-1})=ta+(1-t)+b\sqrt{-1}$ if $b=\pm \frac{1}{2}$
and $f_t(a+b\sqrt{-1})=a$ if $b=0$,
one can also define a deformation retraction
$\varphi :
\ES (\Pi, \partial I\times \{0,\pm \frac{1}{2}\})
\stackrel{\simeq}{\longrightarrow}
\ES (I,\partial I).$
Since $R_{\Sigma}=\varphi\circ R$, the map $R_{\Sigma}$ is 
a deformation retraction.
\end{proof}

\begin{thm}[\cite{Gu1}, \cite{Se}]\label{thm: scanning map}
The  horizontal scanning map
$$
S^H
:\Pol_{D+\infty}^{\Sigma}
\stackrel{\simeq}{\longrightarrow}
\Omega
E^{\Sigma} (I,\partial I)
$$
is a homotopy equivalence.
\end{thm}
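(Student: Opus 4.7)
The plan is to deduce the theorem from the classical Segal-type scanning argument, adapted to the $\Sigma$-configuration setting. First, I would observe that by Lemma \ref{lmm: ES}, the map $\Omega R_\Sigma: \Omega\ES(I^2,\sigma I^2)\to \Omega E^\Sigma(I,\partial I)$ is a homotopy equivalence, so it suffices to prove that the stable two-dimensional scanning map
$$
S:\Pol_{D+\infty}^\Sigma \to \Omega\ES(I^2,\sigma I^2)
$$
is a homotopy equivalence. Using the homeomorphism $\Psi_{D+k\e}$ of (\ref{eq: Pol=E}) together with the homotopy commutativity of (\ref{CD: stabilization}) and (\ref{CD: scanning}), I would identify $\Pol_{D+\infty}^\Sigma$ with the colimit $\ES_\infty(\C):=\lim_{k\to\infty}\ES_{D+k\e}(\C)$ under the stabilization maps $s_D^\Sigma$, so that $S$ becomes a scanning map on configuration spaces in the strip $\R\times(-1,1)$.

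Next, following Segal's strategy (\cite{Se}, later sharpened by Guest \cite{Gu1}), I would introduce for each bounded rectangle $R_n=[-n,n]\times I$ the truncated scanning map
$$
sc_n:\ES(R_n,\sigma R_n)\to \Map^*\bigl([-n,n]/\partial,\,\ES(I^2,\sigma I^2)\bigr),
$$
and prove by induction on $n$ that $sc_n$ is a homotopy equivalence. The inductive step uses the decomposition $R_n = R_n^-\cup R_n^+$ of the rectangle into two halves meeting along a vertical edge, together with the restriction fibration sequence
$$
\ES(R_n^-,\sigma R_n^-)\to \ES(R_n,\sigma R_n)\to \ES(R_n^+,\sigma R_n^+),
$$
which maps under scanning to the path-loop fibration of $\Omega\ES(I^2,\sigma I^2)$ on the target side. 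A diagram chase in the long exact sequences, combined with the inductive hypothesis on each half, gives the inductive step.

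Finally, passing to the colimit $n\to\infty$ produces a map from the group-completed configuration space to $\Omega\ES(I^2,\sigma I^2)$, and the stabilization by the maps $s_{D+k\e}^\Sigma$ corresponds exactly to the $H$-space juxtaposition on configurations. By the group completion theorem (McDuff--Segal), the resulting map on the homology of the colimit is an isomorphism; since both sides are simply connected (the source by Proposition \ref{prp: simply connected} and stabilization, the target as a loop space of a simply connected space), this upgrades to a homotopy equivalence.

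The principal obstacle is establishing the quasifibration property of the horizontal restriction map $\ES(R_n,\sigma R_n)\to \ES(R_n^+,\sigma R_n^+)$ in the presence of the $I(\KS)$-constraints: unlike the classical case of labeled configurations, here one must verify that a small perturbation of the configuration in the right half can be lifted to the whole rectangle without creating forbidden coincidences among the $\xi_k$'s along vertical lines, and that the fiber over a point of the base is genuinely weakly equivalent to the fiber over the basepoint. This is handled by a McDuff-type lifting argument applied componentwise, using that the set $I(\KS)$ is finite and that forbidden subsets $\sigma\in I(\KS)$ are closed conditions which can be avoided by generic horizontal translation of the added points to infinity on the left.
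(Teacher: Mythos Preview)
Your reduction via Lemma~\ref{lmm: ES} to the map $S$ matches the paper.  After that, however, the proposal diverges from the paper's argument and contains a genuine gap.

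The induction on the rectangles $R_n=[-n,n]\times I$ is not well-posed: the spaces $\ES(R_n,\sigma R_n)$ are mutually homeomorphic by rescaling the horizontal coordinate, so there is no genuine inductive parameter, and no natural maps $\ES(R_n,\sigma R_n)\to\ES(R_{n+1},\sigma R_{n+1})$ along which to take a colimit.  More importantly, even if you succeed in proving that the scanning map out of a \emph{relative} configuration space $\ES(R,\sigma R)$ is an equivalence, this does not yet say anything about $\Pol_{D+\infty}^{\Sigma}=\lim_k\ES_{D+k\e}(\C)$, which is a colimit \emph{by degree} with no points forgotten.  Identifying that degree-stabilised colimit with (a component of) a relative configuration space is precisely the content of the theorem, so your argument is circular at this point.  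Finally, the group-completion theorem would only yield a homology equivalence, and your proposed upgrade assumes that $\ES(I^2,\sigma I^2)$ is simply connected, which is not established independently of the result you are proving.

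The paper's proof avoids these issues by a direct Dold--Thom argument in the style of \cite[Lemma 3.4]{Se} and \cite[Prop.~2]{Gu1}.  One embeds the open rectangle $B$ in a larger closed rectangle $B^*$, defines $\ES_D\subset\ES(B^*,\sigma B^*)$ by fixing the degree of the part lying in $B$, and considers the forgetful map $q_D:\ES_D\to\ES(\overline{B},\sigma\overline{B})^2$ that records only the two outer pieces.  After stabilising in $D$ one obtains $q_\infty:\ES\to\ES(\overline{B},\sigma\overline{B})^2$ with fibre $\ES_{D+\infty}(B)\cong\Pol_{D+\infty}^{\Sigma}$, and the Dold--Thom criterion (together with the simple connectivity of the fibres from Proposition~\ref{prp: simply connected}) shows $q_\infty$ is a quasifibration.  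Comparing this with the free-path fibration $res:\Map([0,1],\ES(\overline{B},\sigma\overline{B}))\to\ES(\overline{B},\sigma\overline{B})^2$ via the scanning map on total spaces (which is an equivalence since $[0,1]$ is contractible and $\ES\simeq\ES(\overline{B},\sigma\overline{B})$), the five-lemma gives the result on fibres.  This is the missing idea: set up a single quasifibration whose fibre is already the degree-stabilised space, rather than trying to reach it through a colimit of relative configuration spaces.
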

\begin{proof}
Since $R_{\Sigma}$ is a homotopy equivalence and $S^H=(\Omega R_{\Sigma})\circ S$,
it suffices to show that $S$ is a homotopy equivalence.
\par
The proof is  analogous to the one given in
\cite[Prop. 3.2, Lemma 3.4]{Se} and \cite[Prop. 2]{Gu1}.
We identify $\C=\R^2$ by means of the identification
$x+\sqrt{-1}y\mapsto (x,y)$ in the usual way.
Let $B$ and $B^*$ denote the rectangles in $\R^2=\C$ given by
$
B^*= [-1,2]\times [-1,1]$ and
$B = (0,1)\times (-1,1)$. 
Let $\{V_t:0<t<1\}$ be the family of open rectangles in $B$ given by
$V_t=(t-\epsilon (t),t+\epsilon (t))\times (-1,1)$, where
$\epsilon (t)$ denotes the continuous function defined on the open interval $(0,1)$
such that
$0<\epsilon (t)<\min \{t,1-t\}$
for any $t\in (0,1)$ with 
$\dis \lim_{t\to + 0}\epsilon (t)=\lim_{t\to 1-0}\epsilon (t)=0.$
\par
Let
$
\widetilde{sc}^{H}_D:
\ES_D(B)\times [0,1]\to \ES(\overline{B},\sigma \overline{B})
$
denote the map given by
$$
\widetilde{sc}^{H}_D((\xi_1,\cdots ,\xi_r),t)
=
(\xi_1\cap V_t,\cdots ,\xi_r\cap V_t)\in 
\ES (\overline{V_t},\sigma\overline{V_t})
\cong \ES (\overline{B},\sigma \overline{B}),
$$ 
where
we use the canonical identification
$ (\overline{V_t},\sigma\overline{V_t})
\cong  (\overline{B},\sigma \overline{B}).$
\par
Since $\dis \lim_{t\to + 0}\widetilde{sc}^{H}_D(\xi ,t)
=\lim_{t\to 1-0}\widetilde{sc}^{H}_D(\xi ,t)
=(\emptyset ,\cdots ,\emptyset)$ 
for any $\xi \in \ES_D(B)$,
the adjoint of $\widetilde{sc}^{H}_D$ defines the map
$
sc^{H}_D:\ES_D(B)\to \Omega \ES (\overline{B},\sigma \overline{B}).
$
\newline
If $s_{D}^{\Sigma}:\ES_D(B)\to \ES_{D+\e}(B)$
denotes the stabilization map 
defined by adding points from infinity as in (\ref{CD: stabilization}),
 we obtain a homotopy commutative
diagram
$$
\begin{CD}
\Pol_D^*(S^1,\XS) @>s_{D}>> \Pol_{D+\e}^*(S^1,\XS)
\\
@V{\cong}VV @V{\cong}VV
\\
\ES_D(B) @>s_{D}^{\Sigma}>> \ES_{D+\e}(B)
\end{CD}
$$
for each $D\in \N^r$.
If $\ES_{D+\infty}(B)$ denotes the colimit
$\dis \ES_{D+\infty}(B)=
\lim_{k\to \infty}\ES_{D+k\e}(B)$
taken over the stabilization maps 
$\{s_{D,+k\e}^{\Sigma}:\ES_{D+k\e}(B)\to \ES_{D+(k+1)\e}(B)\}$, 
there is a homotopy commutative diagram
$$
\begin{CD}
\Pol^*_{D+\infty}(S^1,\XS) @>S>>  \Omega \ES(I^2,\sigma I^2)
\\
@V{\cong}VV @V{\cong}VV
\\
\ES_{D+\infty}(B) @>S^{\p}>> \Omega \ES (\overline{B},\sigma \overline{B})
\end{CD}
$$
where we set $\dis S^{\p}=\lim_{k\to\infty}sc^{H}_{D+k\e}$.
It suffices to prove that $S^{\p}$ is a homotopy equivalence.
Let $\ES_D$
denote the subspace of $\ES (B^*,\sigma B^*)$ consisting of all
$r$-tuples
$(\xi_1,\cdots ,\xi_r)\in \ES (B^*,\sigma B^*)$
such that
$(\xi_1\cap B,\cdots ,\xi_r\cap B)\in \ES_D(B).$
\par
Let
$q_{D}:
\ES_D
\to
\ES (B^*,\sigma B^*\cup B)\cong 
\ES (\overline{B},\sigma \overline{B})^2$
denote the restriction of the
the quotient map
$
\ES (B^*,\sigma B^*)\to
\ES (B^*,\sigma B^*\cup B)
\cong
\ES (\overline{B},\sigma \overline{B})^2,
$
where
$\ES (\overline{B},\sigma \overline{B})^2=
\ES (\overline{B},\sigma \overline{B})\times 
\ES (\overline{B},\sigma \overline{B})$.
It is easy to see that the fiber of $q_D$ is homeomorphic to
$\ES_D(B)$.
\par
Next, define the scanning map
$\tilde{S}_D:\ES_D
\to
\Map ([0,1],\ES (\overline{B},\sigma \overline{B}))$ 
by 
$$
\begin{CD}
\ES_D
\times [0,1] 
@>>> \ES (\overline{B},\sigma \overline{B})
\\
((\xi_1,\cdots ,\xi_r),t)
@>>>
(\xi_1\cap B_t,\cdots ,\xi_r\cap B_t)\in
\ES (\overline{B_t},\sigma
\overline{B_t})\cong \ES(\overline{B},\sigma \overline{B})
\end{CD}
$$
where $\{B_t:0<t<1\}$ denotes the family of the open rectangles in $B^*$
defined by
$B_t=(2t-1,2t)\times (-1,1)$
and
we use the canonical identification
$
 (\overline{B_t},\sigma\overline{B_t})
\cong  (\overline{B},\sigma \overline{B}).$
We obtain a 
homotopy commutative diagram
\begin{equation}\label{CD: fibration}
\begin{CD}
\ES_D
@>q_D>> 
\ES (\overline{B},\sigma \overline{B})^2
\\
@V{\tilde{S}_D}VV \Vert @.
\\
\Map ([0,1],\ES (\overline{B},\sigma \overline{B}))
@>res>>
\ES (\overline{B},\sigma \overline{B})^2
\end{CD}
\end{equation}
where
$$res:\Map ([0,1],\ES (\overline{B},\sigma \overline{B}))
\to
\Map (\{0,1\},\ES (\overline{B},\sigma \overline{B}))\cong
\ES (\overline{B},\sigma \overline{B})^2
$$
denotes the  restriction map.
\par
Note that $res$ is a fibration with fibre
$\Omega \ES (\overline{B},\sigma \overline{B})$, but
the map
$q_D$ is not a fibration although every fiber of $q_D$ is homeomorphic to 
$\ES (B)$.
However, once the map $q_D$ is stabilized, it becomes a
quasifibration. 
\par
To see this, let $J=(0,1)$ and
for each $D\in \N^r$
let us choose the $r$-tuple 
$\textit{\textbf{c}}_D=
(c_{D,1},\cdots ,c_{D,r})
\in (J\times J)^r$ 
such that
$c_{D,j}\not= c_{D^{\prime},k}$ if $(D,j)\not= (D^{\prime},k)$ with
$\lim_{D\to\infty}c_{D,k}=(\frac{1}{2},0)$
for each $1\leq k\leq r$.
\par
Let $\widehat{\ES} (B^*,\sigma B^*)$
denote the space of
$r$-tuples $(\xi_1,\cdots ,\xi_r)$
of
formal infinite divisors in $(B^*,\sigma B^*)$
satisfying the following two conditions:
\begin{enumerate}
\item[$(\dagger)_1$]
$(\bigcap_{k=1}^r\xi_k)\cap \R \cap (B^*\setminus \sigma B^*)=\emptyset .$
\item[$(\dagger)_2$]
Each divisor $\xi_k$ is represented as the formal infinite sum of the form
$\xi_k=\sum_{D}\xi_{k,D}$ 
such that $\xi_{k,D}\in \SP^1 (B^*,\sigma B^*)$ 
for each $D\in \N^r$, and it
almost coincides (except finite sums) with
$\xi_{k}^*=\sum_{D}c_{D,k}$
for each $1\leq k\leq r$.
\end{enumerate}
Note that $c_{D,k}\in B$ for any $(D,k)\in \N^r\times [r]$.
Thus one can
define the map
$
\hat{q}:\widehat{\ES}(B^*,\sigma B^*)
\to
\ES (\overline{B},\sigma \overline{B})^2
$
to be the natural quotient map
$$
\widehat{\ES} (B^*,\sigma B^*)
\to
\ES (B^*,\sigma B^*
\cup B)\cong 
\ES (\overline{B},\sigma \overline{B})^2.
$$
By using the Dold-Thom argument exactly as in \cite[Lemma 3.4]{Se}
together with the fact that $\ES_D(\C)$ is simply connected
(by Proposition \ref{prp: simply connected} and (\ref{eq: Pol=E})), 
one can show that $\hat{q}$ is a quasifibration. 

\par
Now define
 stabilization maps
$f_D:\ES_D\to \ES_{D+\e}$ by
$f_D(\xi_1,\cdots ,\xi_r)=
(\xi_1+c_{D,1},
\cdots
,\xi_r+c_{D,r})$
for $(\xi_1,\cdots ,\xi_r)\in \ES_D.$
\par
Let $\ES$ denote the colimit $\dis \ES=\lim_{k\to\infty}\ES_{D+k\e}$ from the stabilization maps 
$\{f_{D+k\e}\}_{k\geq 0}.$
Since
$\widehat{\ES}(B^*,\sigma B^*)$ is $\Z^r\times \ES$,
the restriction $q_{\infty}=\hat{q}\vert \ES:\ES\to
\ES(\overline{B},\sigma \overline{B})^2$
is also a quasifibration.
Since the fiber of $q_D$ 
is $\ES(B)$ and
$q_{\infty}\vert \ES=q_D$,
we see that
the fiber of  $q_{\infty}$ is
$\ES(B)$ and we may regard the map $q_{\infty}$ as the stabilized map
of $\{q_{D+k\e}\}_{k\geq 0}$.
\par
We also obtain the stabilized scanning map
$$
\tilde{S}=\lim_{k\to\infty}\tilde{S}_{D+k\e}:
\ES =\lim_{k\to\infty}\ES_{D+k\e}
\to \Map ([0,1],\ES(\overline{B},\sigma \overline{B})).
$$
Since 
 $[0,1]$ is contractible
 and there is a homotopy equivalence
 $\ES =\lim_{k\to\infty}\ES_{D+k\e}\simeq 
 \ES(\overline{B},\sigma \overline{B})$,
we see that $\tilde{S}$ is a homotopy equivalence.
Since $B\subset B^*\setminus\sigma B^*$,
we may regard $\ES (B)$
as a subspace of $\ES (B^*,\sigma B^*)$
and we see that
$\tilde{S}_D\vert \ES (B)=sc^{H}_D$.
Thus, we can identify $\tilde{S}\vert \ES (B)=S^{\p}$
and by  the diagram (\ref{CD: fibration})
 we have the  homotopy
commutative diagram
\begin{equation*}\label{eq: CD: fibrations}
\begin{CD}
\ES (B) 
@>>> 
\ES 
@>{q_{\infty}}>> 
\ES (\overline{B},\sigma \overline{B})^2
\\
@V{S^{\p}}VV @V{\tilde{S}}V{\simeq}V \Vert @.
\\
\ES (\overline{B},\sigma \overline{B})
@>>>
\Map ([0,1],\ES (\overline{B},\sigma \overline{B}))
@>{res}>>
\ES (\overline{B},\sigma \overline{B})^2
\end{CD}
\end{equation*}
Since
the upper horizontal sequence 
is a quasifibration sequence and
the lower horizontal one is a fibration sequence,
$S^{\p}$ is a homotopy equivalence.
\end{proof}
\section{The stable result}\label{section: stability}
In this section we prove the stability result
(Theorem \ref{thm: V}).

\begin{dfn}
{\rm
Let $D=(d_1,\cdots ,d_r)\in \N^r$ be an $r$-tuple of positive integers.
Then it is easy to see that the following diagram is homotopy commutative:
$$
\begin{CD}
\Pol_D^*(S^1,\XS) @>j_D>> \Omega U(\KS)
\\
@V{s_D}VV   \Vert @. 
\\
\Pol_{D+\textit{\textbf{e}}}^*(S^1,\XS) 
@>j_{D+\textit{\textbf{e}}}>>
 \Omega U(\KS)
\end{CD}
$$
Hence we can stabilize the  map
\begin{equation}\label{eq: inclusion stab}
j_{D+\infty}=\lim_{t\to\infty}j_{D+t\textit{\textbf{e}}}:
\Pol_{D+\infty}^{\Sigma}=\lim_{t\to\infty}
\Pol^*_{D+t\e}(S^1,\XS)
\to
\Omega U(\KS).
\end{equation}
}
\end{dfn}

The main purpose of this section is to prove the following result.
\begin{thm}\label{thm: V}
The map
$j_{D+\infty}:
\Pol^{\Sigma}_{D+\infty}
\stackrel{\simeq}{\longrightarrow}
\Omega U(\KS)
$
is a homotopy equivalence.
\end{thm}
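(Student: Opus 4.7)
The plan is to factor $j_{D+\infty}$ through the horizontal scanning equivalence of Theorem \ref{thm: scanning map} and then to identify the resulting loop space with $\Omega U(\KS)$. The key conceptual point is that the scanning construction and the homogeneous-coordinate construction of $j_D$ both recover a loop from the same root data of the monic polynomials.

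First, Remark \ref{rmk: ESigma}(ii) provides a homeomorphism $\Psi_D \colon \Pol_D^*(S^1,\XS)\xrightarrow{\cong}\ES_D(\C)$, and by diagram (\ref{CD: stabilization}) this intertwines $s_D$ with $s_D^{\Sigma}$. Passing to the colimit, $\Pol^{\Sigma}_{D+\infty}$ is identified with the stabilized configuration space, and Theorem \ref{thm: scanning map} produces a homotopy equivalence
$$
S^H \colon \Pol^{\Sigma}_{D+\infty} \xrightarrow{\simeq} \Omega \ES(I,\partial I) = \Omega E^{\Sigma}(I,\partial I).
$$
It therefore suffices to construct a homotopy equivalence $\alpha \colon \Omega \ES(I,\partial I)\xrightarrow{\simeq}\Omega U(\KS)$ such that $\alpha\circ S^H$ is homotopic to $j_{D+\infty}$.

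To construct $\alpha$, I would view $\ES(I,\partial I)$ as a polyhedral-product-type configuration space: at each $x\in I$, the local multiplicity vector $(m_1(x),\ldots,m_r(x))\in\N^r$ has support in $\KS$. Using the Dold--Thom-type identification $\SP^{\infty}(I,\partial I)\simeq S^1$ slotwise, one gets a natural map $\ES(I,\partial I)\to\mathcal{Z}_{\KS}(S^1,\ast)$. Composing with the pinch map (\ref{eq: pinch}) that relates $\mathcal{Z}_{\KS}(S^1,\ast)$ to the $(D^2,S^1)$-model $\mathcal{Z}_{\KS}$, and with the deformation retract $r_{\C}\colon U(\KS)\simeq\mathcal{Z}_{\KS}$ of Lemma \ref{Lemma: BP}(i), yields $\alpha$. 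The compatibility $\alpha\circ S^H\simeq j_{D+\infty}$ is then checked by tracking an $r$-tuple $(f_1,\ldots,f_r)$ of monic polynomials through both constructions: under $S^H$ its scanned image at $x\in S^1$ encodes the roots in a small vertical strip around $x$, while under $j_{D+\infty}$ the tuple is evaluated at $x$ as homogeneous coordinates $[f_1(x),\ldots,f_r(x)]\in U(\KS)$ via Lemma \ref{lmm: Map D}; a contractible family of deformations interpolates between local root data and pointwise evaluation.

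The principal obstacle I expect is the construction of $\alpha$ as a homotopy equivalence. The subtlety is reconciling the \emph{pointwise} support condition defining $\ES(I,\partial I)$ with the \emph{global} support condition defining $\mathcal{Z}_{\KS}$ (resp. $U(\KS)$). I expect the resolution to rest on the $2$-connectedness of $\mathcal{Z}_{\KS}$ (Lemma \ref{Lemma: BP}(i)), the simple-connectedness of $\Pol_D^*(S^1,\XS)$ (Proposition \ref{prp: simply connected}), and a James--Guest-type polyhedral-product comparison, so that after looping the two support conditions become weakly equivalent on the path-component hit by the scanning map.
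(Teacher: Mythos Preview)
Your overall strategy --- reduce to the scanning equivalence $S^H$ and then identify $\Omega E^{\Sigma}(I,\partial I)$ with $\Omega U(\KS)$ --- is the same as the paper's, but the construction of your comparison map $\alpha$ has a genuine gap.  The slotwise Dold--Thom map $E^{\Sigma}(I,\partial I)\to (S^1)^r$ does \emph{not} land in $\mathcal{Z}_{\KS}(S^1,*)$: the pointwise support condition (at each $x\in I$ the set $\{k:x\in\xi_k\}$ lies in $\KS$) is strictly weaker than the global one ($\{k:\xi_k\neq\emptyset\}\in\KS$).  Worse, no such $\alpha$ can arise by looping a map of either space to the other, because $E^{\Sigma}(I,\partial I)$ and $U(\KS)\simeq\mathcal{Z}_{\KS}$ are not weakly equivalent: the paper computes $\pi_2(E^{\Sigma}(I,\partial I))=0$ (since $\Omega E^{\Sigma}(I,\partial I)\simeq\Pol^{\Sigma}_{D+\infty}$ is simply connected) while $\pi_2(DJ(\KS))=\Z^r$ and $\mathcal{Z}_{\KS}$ is $2$-connected.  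So the equivalence $\Omega E^{\Sigma}(I,\partial I)\simeq\Omega U(\KS)$ is real, but it requires a zigzag, and your ``James--Guest-type comparison'' sketch does not supply one.

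The paper builds the missing bridge out of polynomials rather than polyhedral products.  It introduces the space $\tilde F(U)$ of $r$-tuples of \emph{arbitrary} (not monic, any degree) nonzero polynomials having no common real root in $U=(-1,1)^2$, together with two maps: evaluation at $0$, $ev_{\R}:\tilde F(U)\xrightarrow{\simeq}U(\KS)$ (a deformation retraction via $f(z)\mapsto f(tz)$), and a root map $v:\tilde F(U)/\T^r_{\R}\to E^{\Sigma}(I,\partial I)$.  The map $v$ is \emph{not} an equivalence, but a comparison with the complex version $u:\tilde F^{\C}(U)/\T^r_{\C}\xrightarrow{\simeq}E^{\Sigma}(D^2,S^1)\simeq DJ(\KS)$ shows $v_*$ is an isomorphism on $\pi_k$ for $k\ge 3$, hence $\Omega v$ on $\pi_k$ for $k\ge 2$.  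A scanning map $scan:\tilde F(\C)\to\Map(\R,\tilde F(U))$ then yields factorizations $j_{D+\infty}=(\Omega ev_{\R})\circ\widehat{j_{D+\infty}}$ and $S^H=(\Omega v)\circ\widehat{j'_{D+\infty}}$ linked by the loop of the covering $p:\tilde F(U)\to\tilde F(U)/\T^r_{\R}$.  Since $S^H$, $\Omega ev_{\R}$, and $\Omega p$ are equivalences and $\Omega v$ is a $\pi_{\ge 2}$-isomorphism, $j_{D+\infty}$ is a weak (hence homotopy) equivalence between simply connected spaces.  The space $\tilde F(U)$ is exactly the device that interpolates between ``pointwise evaluation'' and ``root data'' without ever needing a direct map between $E^{\Sigma}(I,\partial I)$ and $U(\KS)$.
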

Before proving Theorem \ref{thm: V} we need the following definition and lemma.
\begin{dfn}
{\rm
Now we identify $\C=\R^2$ in a usual way and let us write
$U=\{w\in \C:\vert \mbox{Re}(w) \vert <1,\vert \mbox{Im}(w)\vert <1\}
=(-1,1)\times (-1,1)$ and $I=[-1,1]$.
\par\vspace{1mm}\par
(i)
For an open set $X\subset \C$, let $F(X)$ 
denote the space
of $r$-tuples $(f_1(z),\cdots ,f_r(z))\in \C [z]^r$
of (not necessarily monic) polynomials satisfying the following condition
$(*)$:
\begin{enumerate}
\item[($*$)]
 For any $\sigma =\{i_1,\cdots ,i_s\}\in I(\KS)$,
the polynomials
$f_{i_1}(z),\cdots ,f_{i_s}(z)$ have no common 
{\it real} roots in $X$ (i.e. no common roots in $X\cap \R$).
 \end{enumerate}
Similarly, let $F^{\C}(X)\subset F(X)$ denote the subspace
of all $(f_1(z),\cdots ,f_r(z)) \in F(X)$
satisfying the following condition $(*)_{\C}$:
\begin{enumerate}
\item[$(*)_{\C}$]
 For any $\sigma =\{i_1,\cdots ,i_s\}\in I(\KS)$,
the polynomials
$f_{i_1}(z),\cdots ,f_{i_s}(z)$ have no common 
roots in $X$.
 \end{enumerate}
\par
\par
(ii) Let $ev_0:F(U)\to U(\mathcal{K}_{\Sigma})$ 
denote the map given by evaluation at $0$, i.e.
$ev_0(f_1(z),\cdots ,f_r(z))=(f_1(0),\cdots ,f_r(0))$
for $(f_1(z),\cdots,f_r(z))\in F(U)$.
\par\vspace{1mm}\par
(iii)
Let $\tilde{F}(U)\subset F(U)$ 
(resp.  $\tilde{F}^{\C}(U)\subset F^{\C}(U)$)
denote the subspace
of all
$(f_1(z),\cdots ,f_r(z))\in F(U)$ 
(resp. $(f_1(z),\cdots ,f_r(z))\in F^{\C}(U)$)
such that no $f_i(z)$ is identically
zero.
Let 
$j_F:\tilde{F}^{\C}(U)\stackrel{\subset}{\longrightarrow}
\tilde{F}(U)$
denote the inclusion map.
\par
(iv)
Let
$ev_{\R}:\tilde{F}(U)\to U(\mathcal{K}_{\Sigma})$
denote
the map given by the restriction
$ev_{\R}=ev_0\vert \tilde{F}(U).$
\par
(v)
Note that the group $\T^r_{\R}=(\R^*)^r$ 
(resp. $\T^r_{\C}=(\C^*)^r$)
acts freely on the space
$\tilde{F}(U)$ 
(resp. $\tilde{F}^{\C}(U)$)
in a natural way,
and
let
\begin{equation}
p:\tilde{F}(U)\to \tilde{F}(U)/\T^r_{\R}
\qquad\quad
(q:\tilde{F}^{\C}(U)\to \tilde{F}^{\C}(U)/\T^r_{\C})
\end{equation}
denote the natural projection, where
 $\tilde{F}(U)/\T^r_{\R}$
 (resp. $\tilde{F}^{\C}(U)/\T^r_{\C}$)
%
%
denotes 
the  orbit space.
Note that $\overline{U}\cap \R=I\times \{0\}$, and
let
\begin{equation}
v:\tilde{F}(U)/\T^r_{\R} \to 
E^{\Sigma} (I, \partial I)
\end{equation}
denote the natural map which
assigns to an $r$-tuple  $(f_1(z),\cdots ,f_r(z))\in \tilde{F}(U)$ 
the $r$-tuple of their configurations represented by
their real roots 
in 
$I=[-1,1]$.
Similarly, note that $\overline{U}=D^2$ and let
\begin{equation}
u:\tilde{F}^{\C}(U)/\T^r_{\C}\to E^{\Sigma} (D^2, S^1)
\end{equation}
denote the natural map which
assigns to an $r$-tuple  $(f_1(z),\cdots ,f_r(z))\in \tilde{F}^{\C}(U)$ 
the $r$-tuple of their configurations represented by
their  roots 
in $D^2$.
\par
(vi)
By using this identification
$(I,\partial I)=(I\times \{0\},\partial I\times \{0\})\subset
(D^2,S^1)$,
we obtain the inclusion map
\begin{equation}\label{eq: map iSigma}
i_{\Sigma}:
\ E^{\Sigma}(I,\partial I)
\stackrel{\subset}{\longrightarrow} 
E^{\Sigma}(D^2,S^1).
\end{equation}
}
\end{dfn}
\begin{lmm}\label{lmm: ev}
$\I$
The space $\Pol_{D+\infty}^{\Sigma}$ is simply connected for any $D\in \N^r$.
\par
$\II$
The map
$ev_{\R}:\tilde{F}(U)\stackrel{\simeq}{\longrightarrow}
U(\mathcal{K}_{\Sigma})$
is a homotopy equivalence.
\par
$\III$
The inclusion map $j_F:\tilde{F}^{\C}(U)
\stackrel{\simeq}{\longrightarrow}
 \tilde{F}(U)$ 
is a homotopy equivalence.
 \par
 $\IV$
The map
$u:\tilde{F}^{\C}(U)/\T^r_{\C}
\stackrel{\simeq}{\longrightarrow}
E^{\Sigma}(D^2,S^1)$ is a homotopy equivalence.
\par
$\V$ 
The induced homomorphisms
$$
\begin{cases}
(i_{\Sigma})_*:\pi_k(E^{\Sigma}(I,\partial I))
\stackrel{\cong}{\longrightarrow}
\pi_k(E^{\Sigma}(D^2,S^1))
\\
v_*:\pi_k(\tilde{F}(U)/\T^r_{\R})
\stackrel{\cong}{\longrightarrow}
\pi_k(E^{\Sigma}(I,\partial I))
\end{cases}
$$
are isomorphisms for any $k\geq 3$.
\end{lmm}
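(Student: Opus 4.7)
For (i), Proposition \ref{prp: simply connected} gives that each $\Pol_D^*(S^1,\XS)$ is $C_D(\Sigma)$-connected with $C_D(\Sigma)\geq 1$, so every stage $\Pol^*_{D+k\e}(S^1,\XS)$ of the stabilization tower is simply connected. Because $\pi_1$ commutes with filtered colimits (after replacing the stabilization maps by cofibrations via mapping cylinders), the direct limit $\Pol^\Sigma_{D+\infty}$ is simply connected as well.

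For (ii)--(iv), the common strategy combines a scaling deformation retraction with a Vassiliev-style transversality argument on degree-bounded strata. For (ii), the homotopy $H_s(f_1,\ldots,f_r)(z)=(f_1(sz),\ldots,f_r(sz))$, $s\in[0,1]$, preserves the no-common-real-root condition since $sz\in U\cap\R$ whenever $z\in U\cap\R$, and at $s=0$ it lands on the constant tuple $(f_1(0),\ldots,f_r(0))=ev_\R(f_1,\ldots,f_r)\in U(\KS)$. This realizes the evaluation map from the larger space $F(U)$ onto $U(\KS)$ as a genuine deformation retraction; the inclusion $\tilde F(U)\subset F(U)$ is then a weak equivalence, because each excluded stratum $\{f_i\equiv 0\}$ has infinite codimension in the degree-bounded stratification of $F(U)$, by essentially the reasoning of Section \ref{section: simplicial resolution}. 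For (iii), the locus of polynomial tuples admitting a common non-real complex root in $U$ has real codimension at least $2(\rmin(\Sigma)-1)\geq 2$ in each degree-bounded stratum, so the inclusion $\tilde F^\C(U)\subset\tilde F(U)$ is a weak equivalence by a filtration-and-transversality argument combined with the direct limit. For (iv), the $\T^r_\C$-quotient of $\tilde F^\C(U)$ identifies a nonzero polynomial of degree $d$ with its unordered multiset of complex roots in $\CP^1$; restricting to $\overline{U}\cong D^2$ and noting that the relation in $E^\Sigma(D^2,S^1)$ identifying configurations agreeing off $S^1$ matches the behavior of roots escaping to infinity under rescaling, the map $u$ is a natural homeomorphism.

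For (v), combine (ii)--(iv) with Theorem \ref{thm: scanning map} and the Dold--Thom identifications $\SP^\infty(I,\partial I)\simeq K(\Z,1)$, $\SP^\infty(D^2,S^1)\simeq K(\Z,2)$. Both $E^\Sigma(I,\partial I)$ and $E^\Sigma(D^2,S^1)$ admit polyhedral-product-type descriptions in terms of spaces built from these Eilenberg--MacLane factors; for $k\geq 3$ the difference between $K(\Z,1)$ and $K(\Z,2)$ in these factors is hidden in the higher connectivity of the polyhedral product by a standard Mayer--Vietoris/connectivity argument, making both $(i_\Sigma)_*$ and $v_*$ isomorphisms in these degrees. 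The main obstacle will be the precise execution of the codimension/transversality argument in (ii), required to bypass the non-vanishing condition defining $\tilde F(U)$, and the combinatorial bookkeeping in (v) needed to align the polyhedral-product loop-space descriptions of both configuration spaces in the relevant dimensional range.
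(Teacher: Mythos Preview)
Your arguments for (i) and (ii) are fine and match the paper's. The serious problems are in (iii) and (iv), and these propagate to (v).

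\textbf{Part (iii).} Your codimension argument does not prove a homotopy equivalence. The locus in $\tilde F(U)$ of tuples admitting a common complex root in $U$ for some $\sigma\in I(\KS)$ has real codimension $2(\rmin(\Sigma)-1)$, and this codimension is \emph{independent of the degree bound} $N$ on the stratum: the conditions $f_i(\alpha)=0$ for $i\in\sigma$ are $|\sigma|$ complex linear equations regardless of $\deg f_i$. Hence the inclusion $\tilde F^{\C}(U)^{(\leq N)}\subset \tilde F(U)^{(\leq N)}$ is only $(2\rmin(\Sigma)-3)$-connected for every $N$, and passing to the colimit gives the same finite connectivity, not a weak equivalence. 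The paper instead reruns the scaling homotopy of (ii) directly on $F^{\C}(U)$ (it preserves the no-common-\emph{complex}-root condition just as well), obtaining that $ev=ev_{\R}\circ j_F:\tilde F^{\C}(U)\to U(\KS)$ is a homotopy equivalence (this is \cite[Lemma 5.4]{KY9}); then $j_F$ is a homotopy equivalence by two-out-of-three with (ii).

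\textbf{Part (iv).} The map $u$ is not a homeomorphism. A class in $\tilde F^{\C}(U)/\T^r_{\C}$ records the full multiset of roots of each $f_i$ in $\C$ (together with the degree), whereas $u$ retains only the roots lying in $\overline{U}=D^2$; two tuples with identical roots in $D^2$ but different roots outside $D^2$ are distinct in the source and identified in the target. The paper proves (iv) by a different route: since the scaling homotopy in (iii) is $\T^r_{\C}$-equivariant and the action is free, one gets a homotopy equivalence $\tilde F^{\C}(U)/\T^r_{\C}\simeq E\T^r_{\C}\times_{\T^r_{\C}}U(\KS)\simeq DJ(\KS)$ (\cite[Theorem 6.29]{BP}); together with the deformation retraction $E^{\Sigma}(D^2,S^1)\simeq DJ(\KS)$ of \cite[Lemma 4.3]{KY9}, this identifies both sides and shows $u$ is a homotopy equivalence.

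\textbf{Part (v).} Your sketch via Dold--Thom and polyhedral products is neither needed nor clearly sufficient, and in any case it rests on (iii) and (iv). The paper's argument is shorter and purely diagrammatic: the projection $p':\tilde F(U)/\T^r_{\R}\to\tilde F(U)/\T^r_{\C}$ is an $(S^1)^r$-bundle, hence a $\pi_k$-isomorphism for $k\geq 3$; the $\T^r_{\C}$-quotient $\tilde j_F$ of $j_F$ is a homotopy equivalence by (iii); and the composite $i_{\Sigma}\circ v$ then agrees (up to these equivalences) with $u\circ(\tilde j_F)^{-1}\circ p'$, forcing $(i_{\Sigma})_*$ to be surjective on $\pi_k$ for $k\geq 3$. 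Injectivity of $(i_{\Sigma})_*$ is immediate from the retraction $R:E^{\Sigma}(D^2,S^1)\to E^{\Sigma}(I,\partial I)$, $(\xi_j)\mapsto(\xi_j\cap I)$, which satisfies $R\circ i_{\Sigma}=\mathrm{id}$. The statement for $v_*$ then follows from the same diagram.
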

\begin{proof}
(i)
Since $\Pol_D^*(S^1,\XS)$ is simply connected for any $D\in \N^r$ 
by Proposition \ref{prp: simply connected}, 
the space $\Pol_{D+\infty}^{\Sigma}$ is also simply connected.
\par
(ii)
Let $i_0:U(\mathcal{K}_{\Sigma})\to F(U)$ be the inclusion map given by viewing constants as polynomials. 
Clearly $ev_0\circ i_0=\mbox{id}$.
Let $f: F(U)\times [0,1]\to
F(U)$ be
the homotopy given by
$f((f_1,\cdots ,f_t),t)=(f_{1,t}(z),\cdots ,f_{r,t}(z))$,
where
$f_{i,t}(z)=f_i(tz)$.
This gives a homotopy between
$i_0\circ ev_0$ and the identity map, and this proves that $ev_0$ is a deformation retraction.
Since $F(U)$ is an infinite dimensional manifold and
$\tilde{F}(U)$ is a closed submanifold of $F(U)$ of infinite codimension,
it follows from \cite[Theorem 2]{EK} that
the inclusion
$\tilde{F}(U)\to F(U)$ is a homotopy equivalence.
Hence the restriction 
$ev_{\R}$ is also  
a homotopy equivalence.
\par
(iii)
It follows from \cite[Lemma 5.4]{KY9} that
the evaluation map 
$ev=ev_{\R}\circ j_F:\tilde{F}^{\C}(U)
\stackrel{\simeq}{\longrightarrow} U(\KS)$ is a
homotopy equivalence. 
Since $ev_{\R}$ is a homotopy equivalence by  the assertion (ii), we see that the map 
$j_F:\tilde{F}^{\C}(U)\stackrel{\simeq}{\longrightarrow}
\tilde{F}(U)$ is also a homotopy equivalence.

\par
(iv)
We know that the map $u$ is a homotopy equivalence
by \cite[page 133]{Gu2}.
However,  for the sake of completeness of this paper we will give the another proof.
Since all the maps and homotopies which
appear in the proof of \cite[Lemma 5.4]{KY9}
are $\T^r_{\C}$-equivariant maps, we see that the map
 $ev$ is
  indeed  a $\T^r_{\C}$-equivariant homotopy equivalence.
 Thus the map $ev$ induces a homotopy equivalence
 $\widetilde{ev}:\tilde{F}^{\C}(U)/\T^r_{\C}
 \stackrel{\simeq}{\longrightarrow}EG\times_GU(\KS)$,
 where
 $G=\T^r_{\C}$ and $EG\times_GU(\KS)$ denotes the Borel construction.
 \par
 Recall that there is a natural deformation retraction
 $rt:EG\times_GU(\KS)\stackrel{\simeq}{\longrightarrow} DJ(\KS)$
 by \cite[Theorem 6.29]{BP}.
If $r_{\Sigma}:E^{\Sigma}(D^2,S^1)
\stackrel{\simeq}{\longrightarrow}
DJ(\KS)$ denotes the deformation retraction given in \cite[Lemma 4.3]{KY9}, then one can check that the following diagram is commutative (up to homotopy equivalence):
\begin{equation}
\begin{CD}
\tilde{F}^{\C}(U)/\T^r_{\C} @>\widetilde{ev}>\simeq>EG\times_GU(\KS)
\\
@V{u}VV @V{rt}V{\simeq}V
\\
E^{\Sigma}(D^2,S^1) @>r_{\Sigma}>\simeq> DJ(\KS)
\end{CD}
\end{equation}
Thus, we see that $u$ is a homotopy equivalence.
\par
(v)
Let $p^{\p}:\tilde{F}(U)/\T^r_{\R}\to \tilde{F}(U)/\T^r_{\C}$
be the natural projection.
Since $p^{\p}$ is a bundle projection with fiber $\T^r_{\C}/\T^r_{\R}\cong (S^1)^r$,
we see that
the map $p^{\p}$ induces an isomorphism on homotopy groups 
$\pi_k(\ )$ for any $k\geq 3$.
Moreover, since the inclusion $j_F$ is a $\T^r_{\C}$-equivariant map and
 the group $\T^r_{\C}$ acts freely on both spaces spaces $\tilde{F}^{\C}(U)$ and 
$\tilde{F}(U)$, there is a map
$\tilde{j}_F:
\tilde{F}^{\C}(U)/\T^r_{\C}
\to
\tilde{F}(U)/\T^r_{\C}$
such that the following diagram is commutative:
\begin{equation}\label{CD: fibration-diagram}
\begin{CD}
\T^r_{\C} @>>> \tilde{F}^{\C}(U) @>q>> \tilde{F}^{\C}(U)/\T^r_{\C}
\\
\Vert @. @V{j_F}V{\simeq}V @V{\tilde{j}_F}VV
\\
\T^r_{\C} @>>> \tilde{F}(U) @>>> \tilde{F}(U)/\T^r_{\C}
\end{CD}
\end{equation}
Since two horizontal sequences of (\ref{CD: fibration-diagram}) are
fibration sequences and the map $j_F$ is a homotopy equivalence,
the map $\tilde{j}_F$ is a homotopy equivalence.
Then if we
consider the commutative diagram
\begin{equation}\label{CD: map v}
\begin{CD}
\tilde{F}(U)/\T^r_{\R} @>v>> E^{\Sigma}(I,\partial I)@>i_{\Sigma}>>
 E^{\Sigma}(D^2,S^1)
 \\
 @V{p^{\p}}VV @. \Vert  @.
 \\
 \tilde{F}(U)/\T^r_{\C}
 @<\tilde{j}_F<\simeq< \tilde{F}^{\C}(U)/\T^r_{\C} @>u>\simeq> E^{\Sigma}(D^2,S^1)
\end{CD}
\end{equation}
we see that the map $i_{\Sigma}$ induces an epimorphism on homotopy groups
$\pi_k(\ )$ for any $k\geq 3$.
Let $R:E^{\Sigma}(D^2,S^1)\to E^{\Sigma}(I,\partial I)$
denote the restriction map given by
$R(\xi_1,\cdots ,\xi_r)=(\xi_1\cap I,\cdots ,\xi_r\cap I)$.
Since $R\circ i_{\Sigma}=\mbox{id}$, the map $i_{\Sigma}$ induces a monomorphism on homotopy groups $\pi_k(\ )$ for any $k\geq 1$.
Thus the map $i_{\Sigma}$ induces an isomorphism on homotopy groups
$\pi_k(\ )$ for any $k\geq 3$.
By the diagram (\ref{CD: map v}), we can easily see that the map
$v$ also induces an isomorphism on homotopy groups
$\pi_k(\ )$ for any $k\geq 3$.
\end{proof}
\begin{proof}[Proof of Theorem \ref{thm: V}]
Since $\Pol^{\Sigma}_{D+\infty}$ and $\Omega U(\KS)$ 
are simply connected, it suffices to show that
$$
( j_{D+\infty})_*:
\pi_k( \Pol^{\Sigma}_{D+\infty})
\stackrel{\cong}{\longrightarrow}
\pi_k(\Omega U(\KS))
\leqno{(\dagger)_k}
$$
is an isomorphism for any $k\geq 2$.
Let us identify $\C =\R^2$ and
let $U=(-1,1)\times (-1,1)$ as before.
Let  $scan: \tilde{F}(\C)\to \Map (\R,\tilde{F}(U))$ denote the map
given by 
$
scan (f_1(z),\cdots ,f_r(z))(w)=(f_1(z+w),\cdots ,f_r(z+w))
$
for $w\in\R$, and 
consider the diagram
$$
\begin{CD}
\tilde{F}(U) @>ev_{\R}>\simeq>U(\mathcal{K}_{\Sigma})
\\
@V{p}VV @. 
\\
\tilde{F}(U)/\T^r_{\R} 
@>v>> 
E^{\Sigma} (I,\partial I)
\end{CD}
$$
This induces the commutative diagram below
$$
\begin{CD}
\tilde{F}(\C) @>scan>> \Map (\R, \tilde{F}(U)) 
@>(ev_{\R})_{\#}>\simeq> 
\Map (\R,U(\KS))
\\
@V{p}VV @V{p_{\#}}VV @.
\\
\tilde{F}(\C)/\T^r_{\R} @>scan>>
\Map (\R, \tilde{F}(U)/\T^r_{\R}) 
@>v_{\#}>{}>
\Map (\R, E^{\Sigma} (I,\partial I))
\end{CD}
$$
Observe that $\Map (\R,\cdot )$ can be replaced by $\Map^* (S^1,\cdot)$
by extending  from $\R$ to $S^1=\R\cup \infty$
(as base-point preserving maps).
Thus 
by setting
$$
\begin{cases}
\widehat{j_D}:\Pol_D^*(S^1,\XS)
 \stackrel{\subset}{\longrightarrow} 
 \tilde{F}(\C) 
\stackrel{scan}{\longrightarrow}
\Map^*(S^1,\tilde{F}(U))
=\Omega \tilde{F}(U)
\\
\widehat{j_D^{\p}}:E^{\Sigma,\R}_D(\C)
\stackrel{\subset}{\longrightarrow}
\tilde{F}(\C) \stackrel{scan}{\longrightarrow}
\Map^*(S^1,\tilde{F}(U)/\T^r_{\R})
=\Omega \tilde{F}(U)/\T^r_{\R}
\end{cases}
$$
we obtain the following commutative diagram
\begin{equation}\label{CD: main1}
\begin{CD}
\Pol_D^*(S^1,\XS) 
@>\widehat{j_D}>> \Omega \tilde{F}(U) 
@>\Omega ev_{\R}>\simeq>
\Omega U(\KS)
\\
@V{\cong}VV @V{\Omega p}V{\simeq}V @. 
\\
\ES_D(\C)
@>\widehat{j_D^{\p}}>>
\Omega \tilde{F}(U)/\T^r_{\R}
@>\Omega  v>{}>
\Omega E^{\Sigma} (I,\partial I)
\end{CD}
\end{equation}
Since $p$ is a covering projection, $\Omega p$ is a homotopy equivalence.
If we identify $\Pol^{\Sigma}_{D+\infty}$ with the colimit
$\dis \lim_{t\to\infty}\ES_{D+t\e}(\C)$, by replacing
 $D$ by $D+t\textit{\textbf{e}}$
$(t\in \N)$
and letting $t\to\infty$, 
we obtain the following homotopy commutative diagram:
\begin{equation}\label{CD: main diagram}
\begin{CD}
\Pol^{\Sigma}_{D+\infty} 
@>\widehat{j_{D+\infty}}>> 
\Omega\tilde{F}(U) 
@>\Omega ev_{\R}>\simeq>
\Omega U(\KS) 
\\
 \Vert @.
@V{\Omega p}V{\simeq}V
@.
\\
\Pol^{\Sigma}_{D+\infty}
@>\widehat{j_{D+\infty}^{\p}}>{}>
\Omega \tilde{F}(U)/\T^r_{\R}
@>{\Omega v}>> 
\Omega E^{\Sigma} (I,\partial I)
\end{CD}
\end{equation}
where we set
$\dis \widehat{j_{D+\infty}}=\lim_{t\to\infty}\widehat{j_{D+t\e}}$
and
$\dis \widehat{j_{D+\infty}^{\p}}=
\lim_{t\to\infty}\widehat{j_{D+t\e}^{\p}}.$
\newline
Since
$(\Omega ev_{\R})\circ \widehat{j_{D+t\textit{\textbf{e}}}}
=j_{D+t\textit{\textbf{e}}}$ and
$(\Omega v)\circ \widehat{j_{D+t\textit{\textbf{e}}}^{\p}}
=sc_{D+t\textit{\textbf{e}}}$
(up to homotopy equivalence),
we also obtain the following two equalities:
\begin{equation}\label{eq: j=S}
j_{D+\infty}=(\Omega ev_{\R})\circ \widehat{j_{D+\infty}},
\quad 
S^H=(\Omega v)\circ \widehat{j_{D+\infty}^{\p}}. 
\end{equation}
Since the map $ev_{\R}$ is a homotopy equivalence, 
it suffices to prove that
\begin{equation*}
(\widehat{j_{D+\infty}})_*:
\pi_k(\Pol^{\Sigma}_{D+\infty})
\stackrel{\cong}{\longrightarrow}
\pi_k(\Omega \tilde{F}(U))
\leqno{(\dagger\dagger)_k}
\end{equation*}
is an isomorphism for any $k\geq 2$.
Since
$S^H=(\Omega v)\circ \widehat{j_{D+\infty}^{\p}}$ is a homotopy equivalence
and
the map $\Omega v$ induces an isomorphism on homotopy groups 
$\pi_k(\ )$ for any $k\geq 2$ (by (v) of Lemma \ref{lmm: ev}),
by  the diagram (\ref{CD: main diagram}) we see that
the map
$\widehat{j_{D+\infty}}$ induces an isomorphism
on homotopy groups $\pi_k(\ )$ for any $k\geq 2$.
This completes the proof of Theorem \ref{thm: V}.
\end{proof}
\begin{crl}
Let $\XS$ be a simply connected non-singular toric variety 
such that 
the condition (\ref{equ: homogenous}.1) is satisfied.  
\par
$\I$
The  two-fold loop map
$
\Omega^2v:\Omega^2\tilde{F}(U)/\T^r_{\R}  \stackrel{\simeq}{\longrightarrow}
\Omega^2 E^{\Sigma} (I,\partial I)
$
is a homotopy equivalence.
\par
$\II$
The loop map
$\Omega i _{\Sigma}:
\Omega E^{\Sigma} (I,\partial I) \to \Omega E^{\Sigma}(D^2,S^1)$
is a universal covering projection with fiber $\Z^r$
(up to homotopy equivalence).
\end{crl}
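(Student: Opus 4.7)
The plan for part (i) is to reuse the proof of Theorem \ref{thm: V}. From equation (\ref{eq: j=S}), $j_{D+\infty} = (\Omega ev_{\R}) \circ \widehat{j_{D+\infty}}$; since the left-hand side is a homotopy equivalence by Theorem \ref{thm: V} and $\Omega ev_{\R}$ is one by Lemma \ref{lmm: ev}(ii), the map $\widehat{j_{D+\infty}}$ must also be a homotopy equivalence. Applying $\Omega$ to the principal $\T^r_{\R}$-bundle $\T^r_{\R} \to \tilde{F}(U) \to \tilde{F}(U)/\T^r_{\R}$ produces a fibration whose fiber is $\Omega \T^r_{\R} \simeq \Omega (\Z/2)^r = \{*\}$, so $\Omega p$ is a weak homotopy equivalence. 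Commutativity of the left square of diagram (\ref{CD: main1}) then gives $\widehat{j^{\p}_{D+\infty}} = \Omega p \circ \widehat{j_{D+\infty}}$, which is therefore also a homotopy equivalence. Because $S^H = (\Omega v) \circ \widehat{j^{\p}_{D+\infty}}$ from the second identity of (\ref{eq: j=S}) is a homotopy equivalence by Theorem \ref{thm: scanning map}, so is $\Omega v$; applying $\Omega$ once more yields (i).

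For part (ii), I compute the homotopy fiber of $i_{\Sigma}$ and then loop. By Lemma \ref{lmm: ev}(iv), $E^{\Sigma}(D^2, S^1) \simeq DJ(\KS)$; combined with the fibration $\mathcal{Z}_{\KS} \to DJ(\KS) \to (\CP^{\infty})^r$ of Lemma \ref{Lemma: BP}(ii) and the $2$-connectedness of $\mathcal{Z}_{\KS}$ from Lemma \ref{Lemma: BP}(i), this yields $\pi_1(E^{\Sigma}(D^2, S^1)) = 0$ and $\pi_2(E^{\Sigma}(D^2, S^1)) \cong \Z^r$. On the other hand, Theorems \ref{thm: V} and \ref{thm: scanning map} supply the chain $\Omega E^{\Sigma}(I, \partial I) \simeq \Pol^{\Sigma}_{D+\infty} \simeq \Omega U(\KS) \simeq \Omega \mathcal{Z}_{\KS}$, so $\pi_j(E^{\Sigma}(I, \partial I)) \cong \pi_j(\mathcal{Z}_{\KS})$ for $j \geq 1$; in particular $\pi_1 = \pi_2 = 0$. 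Invoking Lemma \ref{lmm: ev}(v), which states that $(i_{\Sigma})_*$ is an isomorphism on $\pi_k$ for $k \geq 3$, the long exact sequence of homotopy groups for $i_{\Sigma}$ forces its homotopy fiber $F$ to satisfy $\pi_1(F) \cong \Z^r$ and $\pi_j(F) = 0$ for $j \neq 1$, whence $F \simeq K(\Z^r, 1) \cong \T^r$.

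Passing to loop spaces, the homotopy fiber of $\Omega i_{\Sigma}$ is $\Omega \T^r \cong \Z^r$, which is discrete; hence $\Omega i_{\Sigma}$ is, up to homotopy, a covering projection with fiber $\Z^r$. Since $\Omega E^{\Sigma}(I, \partial I) \simeq \Omega \mathcal{Z}_{\KS}$ is simply connected (as $\mathcal{Z}_{\KS}$ is $2$-connected), this is the universal covering, with deck transformation group $\Z^r$, as required. The main obstacle I anticipate is the careful base-point and component bookkeeping when invoking the long exact sequence at $\pi_0$, $\pi_1$, and $\pi_2$ in part (ii); but the path-connectedness statements already established (Proposition \ref{prp: simply connected} together with Remark \ref{rmk: ESigma}(iv)) and the homotopy equivalences cited above should suffice to make the argument rigorous.
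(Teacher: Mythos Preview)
Your argument for part (ii) is correct and close to the paper's; the only difference is that you compute $\pi_2\big(E^{\Sigma}(D^2,S^1)\big)\cong\pi_2\big(DJ(\KS)\big)\cong\Z^r$ via the fibration (\ref{eq: BP-DJ}), whereas the paper uses the fibration (\ref{eq: DJ}) together with $\pi_2(\XS)=\Z^{r-n}$. Both routes are fine.

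In part (i), however, there is a genuine gap. Your claim that $\Omega p$ is a weak homotopy equivalence is false, and consequently the intermediate assertion that $\Omega v$ is a homotopy equivalence is false as well. The point is that the fibre $\T^r_{\R}=(\R^*)^r$ is not connected: $\pi_0(\T^r_{\R})\cong(\Z/2)^r$. From the long exact sequence of the principal bundle $\T^r_{\R}\to\tilde{F}(U)\to\tilde{F}(U)/\T^r_{\R}$, using that $\tilde{F}(U)\simeq U(\KS)\simeq\mathcal{Z}_{\KS}$ is $2$-connected, one obtains $\pi_1\big(\tilde{F}(U)/\T^r_{\R}\big)\cong(\Z/2)^r$. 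Hence $\Omega\big(\tilde{F}(U)/\T^r_{\R}\big)$ has $2^r$ components, while $\Omega E^{\Sigma}(I,\partial I)\simeq\Pol^{\Sigma}_{D+\infty}$ is connected; so $\Omega v$ cannot be a homotopy equivalence. What your chain of maps does establish is that $\Omega p$, $\widehat{j^{\p}_{D+\infty}}$, and therefore $\Omega v$, induce isomorphisms on $\pi_k$ for every $k\geq 1$; this weaker statement suffices to conclude that $\Omega^2 v$ is a homotopy equivalence, so the final assertion survives once the overclaim is removed.

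That said, the paper's proof of (i) is a single sentence: it simply invokes Lemma \ref{lmm: ev}(v), which already records that $v_*$ and $(i_\Sigma)_*$ are isomorphisms on $\pi_k$ for $k\geq 3$, and this immediately gives that $\Omega^2 v$ is a homotopy equivalence. Your detour through Theorems \ref{thm: V} and \ref{thm: scanning map} reproduces this fact by a longer path.
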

\begin{proof}
(i)
 The assertion (i) follows from (v) of Lemma \ref{lmm: ev}.
 \par
 (ii)
 Since $\Omega^2i_{\Sigma}$ is a homotopy equivalence
 (by (v) of Lemma \ref{lmm: ev}),
the assertion (ii) easily follows from the following two equalities:
 \begin{equation}\label{pi1}
 \pi_1(\Omega E^{\Sigma}(I,\partial I))=0,
 \quad
 \pi_1(\Omega E^{\Sigma}(D^2,S^1))=
\Z^r.
 \end{equation}
 Since  $\Pol^{\Sigma}_{D+\infty}$ is simply connected
 (by (i) of Lemma \ref{lmm: ev}),
by  Theorem \ref{thm: V}  we have  an isomorphism
$
0=\pi_1(\Pol_{D+\infty}^{\Sigma})
\cong
\pi_1(\Omega E^{\Sigma} (I,\partial I))$ and
the first equality of (\ref{pi1}) holds.
It remains the second isomorphism in (\ref{pi1}).
Since there is a homotopy equivalence
$E^{\Sigma}(D^2,S^1)\simeq DJ(\KS)$
(\cite[Lemma 4.3]{KY9}),
it suffices to show that there is an isomorphism
$\pi_2(DJ(\KS))\cong \Z^r$.
Since $\XS$ is simply connected and
$\pi_2(\XS)=\Z^{r-n}$ (by (\ref{eq: pi2XS})),
it follows from the homotopy exact sequence induced from (\ref{eq: DJ})
that
 there is a short exact sequence
$
0\to \pi_2(\XS) =\Z^{r-n} \to \pi_2(DJ(\KS)) 
\stackrel{\partial}{\longrightarrow}
 \pi_1(\T^n_{\C})=\Z^n\to 0.
$
Thus, we have an isomorphism 
$\pi_2(DJ(\KS))
\cong \Z^r$.
\end{proof}
\section{Proofs of the main results}\label{section: proofs}

In this section we give the proofs of the main results
(Theorem \ref{thm: I}, Corollary \ref{crl: I},  Theorem \ref{thm: II}, Corollary \ref{crl: II},
Corollary \ref{crl: I-2}, Corollary \ref{crl: II-2}).
\begin{proof}[Proofs of Theorem \ref{thm: I} and Theorem \ref{thm: II}]
If $\sum_{k=1}^rd_k\textbf{\textit{n}}_k={\bf 0}_n$,
 Theorem \ref{thm: I} easily follows from Theorem \ref{thm: III} 
and Theorem \ref{thm: V}.
Next assume that
$\sum_{k=1}^rd_k\textbf{\textit{n}}_k\not= {\bf 0}_n$.
It follows from the assumption (\ref{equ: homogenous}.1) that
there is an $r$-tuple
$D_*=(d_1^*,\cdots ,d_r^*)\in \N^r$ such that
$\sum_{k=1}^rd_k^*\textbf{\textit{n}}_k= {\bf 0}_n$.
If we choose a sufficiently large integer $m_0\in \N$, then 
the condition $d_k<m_0d_k^*$ holds for each $1\leq k\leq r$.
Then note that 
$j_D:\Pol_D^*(S^1,\XS)\to \Omega \mathcal{Z}_{\KS}$ is
given by the composite of maps
$$
j_D=j_{D_0}\circ s_{D,D_0}:
\Pol_{D}^*(S^1,\XS)
\stackrel{s_{D,D_0}}{\longrightarrow}
\Pol_{D_0}^*(S^1,\XS)
\stackrel{j_{D_0}}{\longrightarrow}
\Omega \mathcal{Z}_{\KS}
$$
as in (\ref{eq:jDDD}), where
$D_0=m_0D_*=(m_0d^*_1,m_0d^*_2,\cdots ,m_0d^*_r).$
Since  the maps $s_{D,D_0}$ and
$j_{D_0}$ are  homotopy equivalences through dimension
$d(D,\Sigma)$ and
$d(D_0,\Sigma)$
(by Theorem \ref{thm: IV} and Theorem \ref{thm: I}),
by using
$d(D,\Sigma)\leq d(D_0,\Sigma)$
the map $j_D$ is a homotopy equivalence through dimension
$d(D,\Sigma)$ and
Theorem \ref{thm: II}
follows.
\end{proof}
\begin{proof}[Proof of Corollary \ref{crl: I}.]
Since $\Omega q_{\Sigma}:\Omega \mathcal{Z}_{\KS}\to \Omega \XS$
is a universal covering, 
the assertion follows from (\ref{eq: lift}) and Theorem \ref{thm: I}.
\end{proof}
\begin{proof}[Proofs of Corollary \ref{crl: II}] 
Consider the map
$i_D:\Pol_D^*(S^1,\XS)\to \Omega \XS$ 
defined by
the composite of maps
\begin{equation}\label{eq: iD corollary}
i_D=\Omega q_{\Sigma}\circ j_D,
\end{equation}
where $j_D:\Pol_D^*(S^1,\XS)\to \Omega \mathcal{Z}_{\KS}$ denotes the map
given by (\ref{eq:jDDD}).
Since 
$\Omega q_{\Sigma}:\Omega \mathcal{Z}_{\KS}\to \Omega \XS$ is a universal covering,
the assertion follows from Theorem \ref{thm: II}.
\end{proof}
\begin{proof}[Proofs of Corollary \ref{crl: I-2} and Corollary \ref{crl: II-2}]
If $\XS$ is compact and $\Sigma (1)\subset \Sigma_1\subsetneqq \Sigma$,
the condition (\ref{equ: homogenous}.1) holds for the fan $\Sigma_1$.
Thus, Corollary \ref{crl: I-2} follows from  
Theorem \ref{thm: I} and
Corollary \ref{crl: I}.
Similarly,
Corollary \ref{crl: II-2} also follows from
Theorem \ref{thm: II} and
Corollary \ref{crl: II}.
\end{proof}


\par\vspace{2mm}\par
\noindent{\bf Acknowledgements. }
The second author was supported by 
JSPS KAKENHI Grant Number  18K03295. 
This work was also supported by the Research Institute for Mathematical Sciences, 
a Joint Usage/Research Center located in Kyoto University.






\begin{thebibliography}{99}



\bibitem{AKY1}
M. Adamaszek, A. Kozlowski  and K. Yamaguchi,
Spaces of algebraic and continuous maps between real algebraic varieties,
 Quart. J. Math. {\bf 62} (2011), 771--790.
\bibitem{AJ}M. F. Atiyah and J. D. S. Jones,
Topological aspects of Yang-Mills theory, Commun. Math. Phys.
{\bf 59} (1978), 97--118.
\bibitem{BP}V. M. Buchstaber and T. E. Panov,
Torus actions and their applications in topology and combinatorics,
Univ. Lecture Note Series {\bf 24}, Amer. Math. Soc. Providence, 2002.
\bibitem{CJS}
R. L. Cohen, J. D. S. Jones and G. Segal,
Stability for holomorphic spheres and Morse theory, Contemporary Math.,
{\bf 258} (2000), 87-106.


\bibitem{Cox1}D. A. Cox, The homogenous coordinate ring of a toric variety, 
J. Algebraic Geometry {\bf 4} (1995), 17-50. 
\bibitem{Cox2}D. A. Cox, The functor of a smooth toric variety,
Tohoku Math. J. {\bf 47} (1995), 251-262.
\bibitem{CLS}
D. A. Cox, J. B. Little and H. K. Schenck,
Toric varieties, Graduate Studies in Math. {\bf 124}, Amer. Math. Soc., 2011.
\bibitem{EK}J. Eells, Jr. and N. H. Kuiper,
Homotopy negligible subsets, Compositio Math., {\bf 21}, (1969), 155-161.
\bibitem{Grom}
M. Gromov, Oka's principle for holomorphic sections of elliptic bundles, 
J. Amer. Math. Soc. {\bf 2} (1989) 851-897.
\bibitem{Gu1}
M. A. Guest, Instantons, rational maps and harmonic maps,
Methamatica Contemporanea {\bf 2} (1992), 113-155.
\bibitem{Gu2}M. A. Guest,
The topology of the space of rational curves on a toric variety,
Acta Math. {\bf 174} (1995), 119--145.
\bibitem{GKY1}
M.A. Guest, A. Kozlowski and K. Yamaguchi,
The topology of spaces of coprime polynomials,
Math. Z. {\bf 217} (1994), 435--446.
\bibitem{GKY2}
M. A. Guest, A. Kozlowski and K. Yamaguchi,
Spaces of polynomials with roots of bounded
multiplicity,
Fund. Math. {\bf 116} (1999), 93--117.
\bibitem{KOY1}A. Kozlowski, M. Ohno and K. Yamaguchi,
Spaces of algebraic maps from real projective spaces to toric varieties,
J. Math. Soc. Japan {\bf 68} (2016), 745-771
\bibitem{KY1}A. Kozlowski and K. Yamaguchi,
Topology of complements of discriminants and resultants, 
J. Math. Soc. Japan
{\bf 52} (2000), 949--959.
\bibitem{KY4}A. Kozlowski and K. Yamaguchi,
Simplicial resolutions and spaces of algebraic maps between real
projective spaces, Topology Appl. {\bf 160} (2013), 87--98.
\bibitem{KY6}A. Kozlowski and K. Yamaguchi, 
The homotopy type of spaces of coprime polynomials revisited, 
Topology Appl. {\bf 206} (2016), 284-304.
\bibitem{KY7}A. Kozlowski and K. Yamaguchi,
The homotopy type of spaces of polynomials with bounded multiplicity, Publ. RIMS. Kyoto Univ., {\bf 52} (2016), 297-308. 
\bibitem{KY8}A. Kozlowski and K. Yamaguchi, 
The homotopy type of spaces of resultants of bounded multiplicity, Topology
Appl. {\bf 232} (2017), 112-139.
\bibitem{KY9}A. Kozlowski and K. Yamaguchi, 
The homotopy type of spaces of rational curves on a toric variety,
Topology Appl. {\bf 249} (2018), 19-42.
\bibitem{Mo1}J. Mostovoy,
Spaces of rational loops on real projective space, 
Trans. Amer. Math. Soc. {\bf 35} (2001), 1951-1970--293.
\bibitem{Mo2}J. Mostovoy,
Spaces of rational maps and the Stone-Weierstrass Theorem, 
Topology {\bf 45} (2006), 281--293.
\bibitem{Mo3}J. Mostovoy,
Truncated simplicial resolutions and spaces of rational maps,
Quart. J. Math. {\bf 63} (2012), 181--187.
\bibitem{MV}J. Mostovoy and E. Munguia-Villanueva,
Spaces of morphisms from a projective space to a toric variety,
Rev. Colombiana Mat. {\bf 48} (2014), 41-53.
\bibitem{Pa1}T. E. Panov,
Geometric structures on moment-angle manifolds,
Russian Math. Surveys {\bf 68} (2013), 503--568.
\bibitem{Se}G. B. Segal,
The topology of spaces of rational functions,
Acta Math. {\bf 143} (1979), 39--72.
\bibitem{Va}V. A. Vassiliev,
Complements of discriminants of smooth maps, 
Topology and Applications, 
Amer. Math. Soc.,
Translations of Math. Monographs \textbf{98},
1992 (revised edition 1994).
\bibitem{Va2}  V. A. Vassiliev, Topologia dopolneniy k diskriminantam, Fazis,  
Moskva 1997.
\end{thebibliography}



\end{document}